\tikzset{bullet/.style={
shape = circle,fill = black, inner sep = 0pt, outer sep = 0pt, minimum size = 0.35em, line width = 0pt, draw=black!100}}
\tikzset{circle/.style={
shape = circle,fill = none, inner sep = 0pt, outer sep = 0pt, minimum size = 0.35em, line width = 1pt, draw=black!100}}
\tikzset{rectangle/.style={
shape = rectangle,fill = white, inner sep = 0pt, outer sep = 0pt, minimum size = 0.35em, line width = 0pt, draw=black!100}}
\tikzset{empty/.style={
shape = circle,fill = white, inner sep = 0pt, outer sep = 0pt, minimum size = 0.35em, line width = 0pt, draw=white!100}}
\tikzset{xmark/.style={
shape = cross out,fill = white, inner sep = 0pt, outer sep = 0pt, minimum size = 0em, line width = 0pt, draw=white!100}}
\tikzset{longrectangle/.style={
inner sep = 1em,
rectangle,
minimum size=1em,
very thick,
draw=black!100, 
}}
\tikzset{label distance=-0.15em}
\tikzset{font=\scriptsize}
\newtheorem{theorem}{Theorem}[section]
\newtheorem{lemma}[theorem]{Lemma}
\newtheorem{proposition}[theorem]{Proposition}
\theoremstyle{definition}
\newtheorem{definition}[theorem]{Definition}
\newtheorem{remark}[theorem]{Remark}
\newtheorem{example}[theorem]{Example}
\numberwithin{equation}{section}
\DeclareMathOperator{\Spec}{Spec}
\DeclareMathOperator{\Proj}{Proj}
\DeclareMathOperator{\Def}{Def}
\DeclareMathOperator{\DefQG}{Def^{\mathrm{QG}}}
\newcommand{\plist}[1]{\noindent \ensuremath $\mathbf{#1}$}
\begin{document}

\title[Invariants of deformations of quotient surface singularities]{Invariants of deformations of quotient surface singularities}

\author[B. Han]{Byoungcheon Han}

\address{Department of Mathematics, Chungnam National University, Daejeon 34134, Korea}

\email{bchan@cnu.ac.kr}

\author[J. Jeon]{Jaekwan Jeon}

\address{Department of Mathematics, Chungnam National University, Daejeon 34134, Korea}

\email{jk-jeon@cnu.ac.kr}

\author[D. Shin]{Dongsoo Shin}

\address{Department of Mathematics, Chungnam National University, Daejeon 34134, Korea}

\email{dsshin@cnu.ac.kr}

\subjclass[2010]{14B07, 53D35}

\keywords{Milnor fiber, quotient surface singularities, symplectic filling}

\begin{abstract}
We find all $P$-resolutions of quotient surface singularities (especially, tetrahedral, octahedral, and icosahedral singularities) together with their dual graphs, which reproduces Jan Steven's list [Manuscripta Math. 1993] of the numbers of $P$-resolutions of each singularities. We then compute the dimensions and Milnor numbers of the corresponding irreducible components of the reduced base spaces of versal deformations of each singularities. Furthermore we realize Milnor fibers as complements of certain divisors (depending only on the singularities) in rational surfaces via the minimal model program for 3-folds. Then we compare Milnor fibers with minimal symplectic fillings, where the latter are classified by Bhupal and Ono [Nagoya Math. J. 2012]. As an application, we show that there are 6 pairs of entries in the list of Bhupal and Ono [Nagoya Math. J. 2012] such that two entries in each pairs represent diffeomorphic minimal symplectic fillings.
\end{abstract}

\maketitle

\section{Introduction}

Let $(X_0,0)$ be a quotient surface singularity and let $\Def(X_0)$ be the reduced base space of a versal deformation of $X_0$.

Koll\'{a}r and Shepherd-Barron~\cite{KSB-1988} prove that there is a one-one correspondence between irreducible components of $\Def(X_0)$ and certain partial resolutions (called \emph{$P$-resolutions}; Definition~\ref{definition:P-resolution}) of $X_0$. Roughly speaking, if $X \to \Delta$ is a smoothing of $X_0$ over a small disk $\Delta$, then there is a $P$-resolution $Y_0 \to X_0$ and a smoothing $Y \to \Delta$ of $Y_0$ which is induced by $\mathbb{Q}$-Gorenstein smoothings of singularities of $Y_0$ such that $Y \to \Delta$ blows down to $X \to \Delta$. In this way they have shown how to find irreducible components of $\Def(X_0)$ and how to compute their dimensions.

Meanwhile, Stevens~(\cite{Stevens-1991, Stevens-1993}) determine all $P$-resolutions of each quotient surface singularities. For cyclic quotient singularities, Stevens~\cite{Stevens-1991} finds out all $P$-resolutions by an inductive procedure using certain continued fractions which represent zero. On the other hand, PPSU~\cite[p.45]{PPSU-2015} provides a direct algorithm recovering $P$-resolutions from the continued fractions via the minimal model program for 3-folds. The study of dihedral singularities is reduced to the cyclic case; Stevens~\cite[\S7]{Stevens-1991}. Finally, Stevens~\cite{Stevens-1993} determines all $P$-resolutions of the remained cases (i.e., tetrahedral, octahedral, icosahedral singularities; \emph{TOI-singularities} in short) and he gives the number of $P$-resolutions of each singularities in \cite[Table~1]{Stevens-1993}; cf. PPSU~\cite[Remark~6.11]{PPSU-2015}.

One of the main results of this paper is reproducing the above list of Stevens~\cite[Table~1]{Stevens-1993} together with their dual graphs.

\begin{theorem}\label{theorem:list}
We find all $P$-resolutions of tetrahedral, octahedral, and icosahedral singularities in Section~\ref{section:list}. And, for each $P$-resolutions, we compute the dimensions and Milnor numbers of the corresponding irreducible components of $\Def(X_0)$.
\end{theorem}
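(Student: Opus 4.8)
The assertion records the outcome of an explicit finite case analysis performed in Section~\ref{section:list}; here I describe the plan. First I would write down the minimal resolution graph of each tetrahedral, octahedral and icosahedral singularity $X_0 = \mathbb{C}^2/G$. Each is star-shaped: a central rational curve $E_0$ with $E_0^2 = -b_0$, together with exactly three arms, the $j$-th arm being the Hirzebruch--Jung string of a cyclic quotient singularity, the arms governed by the polyhedral type (with reduced multiplicities $(2,3,3)$, $(2,3,4)$ or $(2,3,5)$) and by the cyclic scalar part of $G$, and the whole graph constrained so that the link is that of $X_0$. Running through Brieskorn's classification of the finite subgroups $G \subset GL(2,\mathbb{C})$ of these types gives the finite list of graphs, organized as in the parametrization underlying \cite[Table~1]{Stevens-1993}.

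Next I would enumerate the $P$-resolutions (Definition~\ref{definition:P-resolution}). For a $P$-resolution $f\colon Y_0 \to X_0$, its minimal resolution $\widetilde{Y_0}$ is obtained from the minimal resolution of $X_0$ by a (possibly empty) sequence of blow-ups --- both being resolutions of $X_0$ --- and its dual graph decomposes into the Hirzebruch--Jung strings of the singularities $q_1,\dots,q_k$ of $Y_0$, each of class $T$ (so each an $ADE$ graph or a Hirzebruch--Jung chain of a cyclic singularity $\tfrac{1}{dn^2}(1, dna-1)$), together with a remainder subject to the condition that $K_{Y_0}$ is ample over $X_0$. Which extra blow-ups occur is dictated by the minimal model program run on a $\mathbb{Q}$-Gorenstein smoothing of $X_0$ (a $3$-fold), whose combinatorial shadow is the algorithm of \cite[p.~45]{PPSU-2015} associated with continued fractions representing zero. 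Since such a smoothing of $\mathbb{C}^2/G$ is governed, away from $E_0$, by independent modifications along the three arms --- which is the cyclic situation --- the enumeration reduces to listing, for each admissible behaviour at the central curve, the compatible choices along the arms; carrying this out for every graph from the first step produces all $P$-resolutions together with their dual graphs, and one checks that the totals reproduce \cite[Table~1]{Stevens-1993}. I expect this step to be the main obstacle: one must pin down exactly which chains are $T$-strings, control the interaction at $E_0$ (including the cases in which $E_0$ is blown up or absorbed into a $T$-string), and make sure that the search is exhaustive.

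Finally, for each $P$-resolution $Y_0$ I would read the singularity types $q_i$ off its dual graph and apply two formulas. By Koll\'ar--Shepherd-Barron, the irreducible component of $\Def(X_0)$ attached to $Y_0$ has dimension $\dim T^1_{QG}(Y_0)$; since $\mathbb{Q}$-Gorenstein deformations of $T$-singularities are unobstructed and $H^2(Y_0,\Theta_{Y_0}) = 0$ ($f$ being birational onto an affine germ with fibres of dimension $\le 1$), the local-to-global sequence gives $\dim T^1_{QG}(Y_0) = h^1(Y_0,\Theta_{Y_0}) + \sum_i \dim T^1_{QG}(q_i)$, and both terms are computed from the dual graphs. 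For the Milnor number $\mu$ of the associated smoothing of $X_0$: its Milnor fibre is obtained from a neighbourhood of the exceptional set of $\widetilde{Y_0}$ by deleting regular neighbourhoods of the exceptional configurations over the $q_i$ and gluing in the $\mathbb{Q}$-Gorenstein Milnor fibres of the $q_i$, so an Euler-characteristic count yields $\mu = N - \sum_i e_i + \sum_i \mu_i$, where $N$ is the number of curves in $\widetilde{Y_0}$, $e_i$ the number of those lying over $q_i$, and $\mu_i$ the Milnor number of the $\mathbb{Q}$-Gorenstein smoothing of $q_i$. Tabulating these two numbers over the entries found in the second step completes the argument.
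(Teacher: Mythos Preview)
Your sketch is correct and follows essentially the same plan as the paper: enumerate the minimal resolution graphs, list the $P$-resolutions by a finite search following Stevens~\cite{Stevens-1991,Stevens-1993} (and the PPSU algorithm in the cyclic pieces), and then compute the two invariants from the dual graphs. Your dimension formula is exactly KSB~\cite[Corollary~3.20]{KSB-1988} (the paper writes it as $\sum_i \dim\Def(Y_0,Q_i)+d$ with $d=h^1(Y_0,\Theta_{Y_0})$), and your Milnor-number formula $\mu=N-\sum_i e_i+\sum_i\mu_i$ is the paper's lemma once one uses that the $\mathbb{Q}$-Gorenstein Milnor number of a $T$-singularity of type $\tfrac{1}{dn^2}(1,dna-1)$ is $d-1$. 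One small terminological slip: in the paper's conventions rational double points are allowed on a $P$-resolution but are not themselves ``of class $T$'', so keep the two cases separate when you carry out the enumeration.
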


Here, the \emph{Milnor number} of an irreducible component of $\Def(X_0)$ is the second Betti number of a \emph{Milnor fiber} of a smoothing $X \to \Delta$ of $X_0$ contained in the component, where, roughly speaking, a Milnor fiber is a general fiber of a smoothing $X \to \Delta$. Note that every component of $\Def(X_0)$ contains a smoothing because quotient surface singularities are rational singularities; Artin~\cite{Artin-1974}. In Section~\ref{subsection:P-resolution-vs-invariant} we briefly recall how to compute dimensions and Milnor numbers from $P$-resolutions.

\subsection{Milnor fibers as complements of the compactifying divisors}

Using the list in Theorem~\ref{theorem:list}, we realize Milnor fibers of each irreducible components as complements of certain divisors (depending only on the given singularities, called, \emph{compactifying divisors} (See below) embedded in rational surfaces.

We briefly recall how to realize Milnor fibers as complements of compactifying divisors; for details, see PPSU~\cite{PPSU-2015} for example. Let $(X_0,0)$ be a non-cyclic quotient singularity. Then $X_0$ admits a good $\mathbb{C}^{\ast}$-action. That is, if $X_0=\Spec(A)$, then $A$ is a graded ring with non-negative weights. So there is a \emph{singular natural compactification} $\overline{X}_0 = \Proj(A[t])$ where the weight of $t$ is $1$; Pinkham~\cite{Pinkham-1977}. The \emph{singular compactifying divisor} of $X_0$ is the complement $\overline{E}_{\infty}=\overline{X}_0-X_0$. It has been known that there are cyclic quotient singularities on $\overline{E}_{\infty}$ in $\overline{X}_0$.

According to Pinkham~\cite{Pinkham-1977}, every deformation $X \to \Delta$ of $X_0$ can be lifted to a deformation $\overline{X} \to \Delta$ of $\overline{X}_0$ that is locally trivial near $\overline{E}_{\infty}$. So if $X_t$ ($t \neq 0$) is a general fiber of a smoothing $X \to \Delta$ of $X_0$, that is, if $X_t$ is a Milnor fiber of $X \to \Delta$, then there is a deformation $\overline{X} \to \Delta$ of $\overline{X}_0$ that is a lifting of $X \to \Delta$ such that
\begin{equation*}
X_t = \overline{X}_t - \overline{E}_{\infty}.
\end{equation*}
Let us take a simultaneous resolution $\widehat{X} \to \overline{X}$ of $\overline{X} \to \Delta$ along the cyclic quotient singularities on each $\overline{E}_{\infty} \subset \overline{X}_t$. We call the induced deformation $\widehat{X} \to \Delta$ by the \emph{natural compactification}. Let $\widehat{E}_{\infty}$ is the proper transform of $\overline{E}_{\infty}$, which is called the \emph{compactifying divisor} of $X_0$. Then
\begin{equation*}
X_t = \widehat{X}_t - \widehat{E}_{\infty}.
\end{equation*}

Since the complement $X_t (=\widehat{X}_t - \widehat{E}_{\infty})$ is Stein as it is a Milnor fiber, $\widehat{E}_{\infty}$ supports an ample divisor in $\widehat{X}_t$. Furthermore it has been known that $\widehat{X}_t$ is a rational surface; see Theorem~\ref{theorem:rational}. Therefore every $(-1)$-curve in $\widehat{X}_t$ should intersect $\widehat{E}_{\infty}$. Conversely, if the compactifying divisor $\widehat{E}_{\infty}$ is embedded in a rational surface $Z$ supporting an ample divisor, then there is a smoothing $X \to \Delta$ of $X_0$ such that $X_t = Z - \widehat{E}_{\infty}$; Pinkham~\cite[Theorem~6.7]{Pinkham-1978}. That is, once one knows how $(-1)$-curves in $\widehat{X}_t$ intersect $\widehat{E}_{\infty}$, one can realize the Milnor fiber $X_t$ as a complement.

\begin{theorem}
For each $P$-resolutions in Theorem~\ref{theorem:list}, we compute how $(-1)$-curves in $\widehat{X}_t$ intersect $\widehat{E}_{\infty}$.
\end{theorem}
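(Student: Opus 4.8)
The plan is to carry out this computation $P$-resolution by $P$-resolution, using the explicit dual graphs from Theorem~\ref{theorem:list} together with the minimal model program for $3$-folds as the engine, following the strategy of PPSU~\cite{PPSU-2015}. First I would fix a $P$-resolution $Y_0 \to X_0$ and recall that the corresponding irreducible component of $\Def(X_0)$ is obtained by $\mathbb{Q}$-Gorenstein smoothing the (Wahl, i.e.\ cyclic quotient $\frac{1}{n^2}(1,na-1)$) singularities of $Y_0$ while keeping the remaining part of $Y_0$ fixed. Passing to the natural compactification, I would construct a partial compactification $\overline{Y}_0$ of $Y_0$ by attaching the singular compactifying divisor $\overline{E}_{\infty}$, then resolve to get $\widehat{Y}_0$ carrying the compactifying divisor $\widehat{E}_{\infty}$. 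The $\mathbb{Q}$-Gorenstein smoothing of $Y_0$ lifts to a $\mathbb{Q}$-Gorenstein smoothing of the compactified surface that is trivial near $\widehat{E}_{\infty}$; running a relative MMP over the disk $\Delta$ on (a resolution of the total space of) this family contracts the flipping/divisorial locus and produces, in the central fiber, a sequence of blow-downs that one can track combinatorially on the dual graph. The upshot is that the general fiber $\widehat{X}_t$ is obtained from (a blow-up of) $\widehat{Y}_0$-minus-the-Wahl-chains, and the intersection behavior of the $(-1)$-curves with $\widehat{E}_{\infty}$ is dictated by how the MMP steps interact with the proper transform of $\overline{E}_{\infty}$.

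Concretely, the key steps in order are: (i) for each entry in the list, write down the dual graph of $\widehat{Y}_0$, marking the Wahl chains $W_i$ that get $\mathbb{Q}$-Gorenstein smoothed and the part $\widehat{E}_{\infty}$ that stays; (ii) identify the extremal rays of the relative MMP — these come from the $(-1)$-curves appearing after the standard partial resolution of the Wahl singularities, and each flip or divisorial contraction has a well-understood local model (the " $\frac{1}{n^2}(1,na-1)$ flip" of Mori / Kollár–Mori, already used in PPSU); (iii) run the MMP to its end, at which stage the central fiber has only quotient singularities on the remaining divisor and the total space is $\mathbb{Q}$-Gorenstein smooth away from it, so the general fiber is a smooth rational surface $\widehat{X}_t$ containing $\widehat{E}_{\infty}$ as a configuration with the same dual graph; (iv) count, by comparing Picard numbers (equivalently, by the dimension formula of Section~\ref{subsection:P-resolution-vs-invariant} together with Theorem~\ref{theorem:rational} that $\widehat{X}_t$ is rational), how many $(-1)$-curves there are in $\widehat{X}_t$, and read off from the MMP bookkeeping which components of $\widehat{E}_{\infty}$ each $(-1)$-curve meets and with what multiplicity. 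Since $\widehat{X}_t - \widehat{E}_{\infty}$ is Stein, every $(-1)$-curve must hit $\widehat{E}_{\infty}$, which is a useful consistency check at each stage.

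The main obstacle I expect is the bookkeeping in step~(ii)–(iii): tracking precisely how each MMP step modifies the proper transform of $\overline{E}_{\infty}$ and the self-intersections along the chain, across all the tetrahedral, octahedral, and icosahedral cases (there are many $P$-resolutions, and the Wahl chains can be long). In particular, one must be careful that a divisorial contraction in the MMP does not contract a component of $\widehat{E}_{\infty}$ itself — it cannot, since $\widehat{E}_{\infty}$ supports an ample divisor on $\widehat{X}_t$ — so all contracted curves are "interior", but verifying this and correctly propagating the numerical data through each flip is the delicate part. A secondary subtlety is that for some singularities the natural compactification $\widehat{X}_t$ is not minimal, so one should be careful to describe the $(-1)$-curves in the specific model produced by the MMP rather than in an arbitrary blow-down. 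I would organize the final answer as a case-by-case table, one row per $P$-resolution, recording $\widehat{E}_{\infty}$, the number of $(-1)$-curves, and their intersection pattern with $\widehat{E}_{\infty}$, exactly parallel to the list in Theorem~\ref{theorem:list}.
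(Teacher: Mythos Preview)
Your approach is essentially the same as the paper's, and the outline is correct. One correction is needed, however: you write that you will $\mathbb{Q}$-Gorenstein smooth the ``Wahl'' singularities of the $P$-resolution $Y_0$, but $P$-resolutions in general carry singularities of class~$T$ (type $\tfrac{1}{dn^2}(1,dna-1)$ with $d\ge 1$), not just Wahl singularities ($d=1$). The usual flip in Section~\ref{section:MMP} is set up specifically for a $(-1)$-curve through a single \emph{Wahl} singularity, so the paper first replaces each $P$-resolution by the corresponding $M$-resolution $Z_0\to X_0$ of Behnke--Christophersen (Definition~\ref{definition:M-resolution}), which has only Wahl singularities, and runs the semi-stable MMP on the compactification $\widehat{Z}_0$ of $Z_0$. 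This is not merely cosmetic: for instance, $O_{12(3-2)+7}[3]$ has a class-$T$ singularity with $d=2$, and the paper explicitly passes to its $M$-resolution (Example~\ref{example:M-resolution-O_12(3-2)+7}) before computing the $(-1)$-data.

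Once this is fixed, your steps (i)--(iv) match the paper's procedure. The bookkeeping device you leave implicit in step~(iv) is Proposition~\ref{proposition:degenerationCurves}: after a usual flip $(C\subset\widehat{Z})\dashrightarrow(C^+\subset\widehat{Z}^+)$, a curve $\Gamma$ meeting $C$ transversally has $\Gamma_0^+=\Gamma_0+C^+$ in the new central fiber. This is precisely how the paper tracks which components of $\widehat{E}_\infty$ each newly visible $(-1)$-curve in the general fiber meets, and it is what makes the ``MMP bookkeeping'' in your plan effective rather than ad hoc.
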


The main ingredient of the computations is the method developed in PPSU~\cite{PPSU-2015} using the minimal model program for 3-folds. We briefly review in Section~\ref{section:MMP}.

\subsection{Milnor fibers and symplectic fillings}

Finally, we compare Milnor fibers associated to $P$-resolutions in Theorem~\ref{theorem:list} with minimal symplectic fillings of the links of quotient surface singularities, where the latter are classified by Bhupal-Ono~\cite{Bhupal-Ono-2012}.

We briefly recall some basics on symplectic fillings of quotient surface singularities; for details, refer Lisca~\cite{Lisca-2008} and Bhupal-Ono~\cite{Bhupal-Ono-2012}. Let $X_0$ is a quotient surface singularity. Suppose that $(X_0,0) \subset (\mathbb{C}^N,0)$. A \emph{link} $L$ of $X_0$ is a boundary of $B_{\epsilon} \cap X_0$, where $B_{\epsilon}$ is a very small ball centered at the origin $0 \in \mathbb{C}^N$ with radius $\epsilon > 0$. A \emph{symplectic filling} of a link $L$ of $X_0$ (or a symplectic filling of $X_0$, for simplicity) is a symplectic $4$-manifold $W$ with $\partial W = L$ satisfying a certain compatible condition on the contact structure of $\partial W$ coming from the symplectic structure on $W$ and the Milnor fillable contact structure of $L$. Note that a Milnor fiber of a smoothing of $X_0$ is a typical example of a minimal symplectic filling of $X_0$.

Lisca~\cite{Lisca-2008} classifies minimal symplectic fillings of lens spaces, where lens spaces can be realized as links of cyclic quotient singularities. He also shows that any minimal symplectic fillings of a cyclic quotient singularity are diffeomorphic to its Milnor fibers. Then Bhupal-Ono~\cite{Bhupal-Ono-2012} classifies minimal symplectic fillings of the remained quotient surface singularities. For dihedral singularities, the classification is reduced to that of cyclic quotient singularities; PPSU~\cite{PPSU-2015}. Bhupal-Ono~\cite{Bhupal-Ono-2012} shows that any minimal symplectic fillings of TOI-singularities are symplectic deformation equivalent to the complements of the compactifying divisors embedded in iterated blow-ups of  $\mathbb{CP}^2$ or $\mathbb{CP}^1 \times \mathbb{CP}^1$. Then they classifies all possible embeddings of the compactifying divisors into iterated blow-ups of  $\mathbb{CP}^2$ or $\mathbb{CP}^1 \times \mathbb{CP}^1$ for each singularities; cf. PPSU~\cite[Remark 4.13]{PPSU-2015}. We briefly summarize their classification in Section~\ref{subsection:classification-symplectic-filling}.

Similar to Lisca~\cite{Lisca-2008}, PPSU~\cite{PPSU-2015} shows that any minimal symplectic fillings of TOI-singularities are diffeomorphic to their Milnor fibers, respectively. Therefore there is a one-one correspondence (up to diffeomorphism type) between $P$-resolutions of quotient surface singularities and their minimal symplectic fillings.

it is natural to compare two classifications of $P$-resolutions and minimal symplectic fillings of each singularities.

\begin{theorem}
\label{theorem:complement}
For each $P$-resolutions in Theorem~\ref{theorem:list}, we realize them as complements of the compactifying divisors and compare them with minimal symplectic fillings classified by Bhupal-Ono~\cite[\S5]{Bhupal-Ono-2012}.
\end{theorem}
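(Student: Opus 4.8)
The plan is to carry out, case by case, the explicit computation that the three classifications meet: (i) the list of $P$-resolutions with their dual graphs from Theorem~\ref{theorem:list}; (ii) the realization of each associated Milnor fiber $X_t = \widehat{X}_t - \widehat{E}_\infty$ as a complement, obtained by tracking how the $(-1)$-curves of $\widehat{X}_t$ meet the compactifying divisor $\widehat{E}_\infty$ (Theorem~\ref{theorem:complement}'s first half); and (iii) the Bhupal--Ono list of embeddings of $\widehat{E}_\infty$ into iterated blow-ups of $\mathbb{CP}^2$ or $\mathbb{CP}^1\times\mathbb{CP}^1$. The organizing principle is that, by PPSU~\cite{PPSU-2015}, there is already a one-one correspondence up to diffeomorphism between $P$-resolutions and minimal symplectic fillings; so the content of the theorem is to make this correspondence \emph{explicit} — to say which $P$-resolution in our list matches which entry in Bhupal--Ono~\cite[\S5]{Bhupal-Ono-2012} — and, as a byproduct, to detect coincidences (the announced $6$ pairs of Bhupal--Ono entries giving diffeomorphic fillings).

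**First I would** fix, for each of the tetrahedral, octahedral, and icosahedral singularities, the compactifying divisor $\widehat{E}_\infty$ together with its dual graph; this is a finite, singularity-dependent list that follows from Pinkham's construction recalled above and is independent of the chosen smoothing component. **Next**, for each $P$-resolution $Y_0 \to X_0$ in Theorem~\ref{theorem:list}, I would run the minimal model program machinery of Section~\ref{section:MMP} (following PPSU~\cite{PPSU-2015}) on the natural compactification $\widehat{X} \to \Delta$: starting from the compactified total space, one performs the MMP over $\Delta$, and the steps of the program record precisely which $(-1)$-curves appear in $\widehat{X}_t$ and with what intersection pattern against $\widehat{E}_\infty$. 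This yields, for each component of $\Def(X_0)$, a concrete embedding of $\widehat{E}_\infty$ into a rational surface $Z$ with $X_t = Z - \widehat{E}_\infty$, hence the first assertion of the theorem. **Then**, I would compare each such embedding with the Bhupal--Ono classification: two complements $Z - \widehat{E}_\infty$ and $Z' - \widehat{E}_\infty$ are matched by blowing down the $(-1)$-curves disjoint from (or suitably transverse to) $\widehat{E}_\infty$ until one reaches a standard model $\mathbb{CP}^2$ or $\mathbb{CP}^1\times\mathbb{CP}^1$, and reading off the homology classes of the components of $\widehat{E}_\infty$ there; agreement of these homological data with a Bhupal--Ono entry identifies the filling. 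The numerical invariants already computed in Theorem~\ref{theorem:list} — dimension and Milnor number — serve as a first coarse check that the matching is consistent, since diffeomorphic fillings must share the same second Betti number.

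**The main obstacle** will be the sheer bookkeeping and its reliability: there are many $P$-resolutions (Stevens'~\cite[Table~1]{Stevens-1993} numbers), each producing its own sequence of MMP steps and its own divisorial intersection data, and the comparison with~\cite[\S5]{Bhupal-Ono-2012} requires translating between two different conventions (our MMP-produced blow-ups versus their explicit iterated blow-ups of $\mathbb{CP}^2$ or $\mathbb{CP}^1\times\mathbb{CP}^1$). In particular, detecting the $6$ coincidental pairs is delicate: a priori distinct Bhupal--Ono entries can yield the same rational surface with the same divisor configuration only after a non-obvious sequence of blow-downs and changes of basis in $H_2$, so the hard part is to exhibit, for each of these $6$ pairs, an explicit diffeomorphism (or an explicit identification of the two embeddings after common blow-downs) rather than merely matching $b_2$. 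I expect this to be handled by reducing both members of a pair to a common minimal model and checking that the images of $\widehat{E}_\infty$ coincide up to the automorphisms of $\mathbb{CP}^2$ or $\mathbb{CP}^1\times\mathbb{CP}^1$; the remaining (non-coincidental) entries are then separated either by $b_2$ or by a finer invariant of the embedding read off from the MMP output.
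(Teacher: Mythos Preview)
Your high-level plan is correct and matches the paper: compute, for each $P$-resolution, the intersection data of $(-1)$-curves with $\widehat{E}_\infty$ in the compactified Milnor fiber via the semi-stable MMP of Section~\ref{section:MMP} (the ``MMP $(-1)$-data''), and then compare with the Bhupal--Ono list. However, you misidentify both the mechanism of comparison and where the genuine difficulty lies.

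The paper does not compare by reading off homology classes of the components of $\widehat{E}_\infty$ in a minimal model. Instead it applies the specific sequence of blow-ups and blow-downs that Bhupal--Ono themselves use (transforming $\widehat{E}_\infty$ into a cuspidal curve plus a chain $C_1,\dotsc,C_k$), and records which $C_i$'s the $(-1)$-curves hit; this ``partial $(-1)$-data'' is then matched against the ``BO $(-1)$-data'' directly. In most cases this pins down the BO entry uniquely. The actual obstacle is that in $16+19=35$ cases two distinct $P$-resolutions of the same singularity produce identical partial $(-1)$-data, and these ambiguities must be resolved by separate ad hoc arguments: for $16$ pairs (Case~A) one must blow down further to decide whether the underlying rational surface is $\mathbb{CP}^2$ or $\mathbb{CP}^1\times\mathbb{CP}^1$; for $19$ pairs (Case~B, all of type $(3,1)$) one must locate a specific $(-1)$-curve $E$ with $E\cdot B=1$ and check whether it meets some $C_i$ (Bhupal--Ono's Case~I vs.\ Case~II). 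You do not anticipate this two-case ambiguity analysis, which is the main technical content beyond the MMP bookkeeping.

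Conversely, the $6$ diffeomorphic pairs you flag as ``the hard part'' are in fact the easy part: they arise from $P$-resolutions that are visibly related by a symmetry of the dual graph, so the corresponding Milnor fibers are trivially diffeomorphic. No reduction to a common minimal model or automorphism check is needed; the coincidence is read off from the list of $P$-resolutions itself. Showing these are the \emph{only} coincidences is not part of this theorem and is cited from PPSU~\cite{PPSU-2015}.
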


\subsection{Applications}

As an application, we reduce the list of Bhupal-Ono~\cite{Bhupal-Ono-2012} of all possible minimal symplectic fillings.

\begin{proposition}
There are 6 pairs of entries in the list of Bhupal--Ono~\cite[\S5]{Bhupal-Ono-2012} such that the minimal symplectic fillings associated to entries in each pairs are diffeomorphic:

\begin{multicols}{2}

\begin{itemize}
\item \#7 and \#8
\item \#129 and \#209
\item \#212 and \#213
\end{itemize}

\columnbreak

\begin{itemize}
\item \#10 and \#11
\item \#132 and \#211
\item \#135 and \#214
\end{itemize}
\end{multicols}
\end{proposition}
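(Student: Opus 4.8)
The plan is to prove the proposition by exhibiting, for each of the six pairs, an explicit diffeomorphism between the two minimal symplectic fillings in question, using the realization of Milnor fibers as complements of compactifying divisors established in Theorem~\ref{theorem:complement}. Since PPSU~\cite{PPSU-2015} shows that every minimal symplectic filling of a TOI-singularity is diffeomorphic to a Milnor fiber of some $P$-resolution, and since the list in Theorem~\ref{theorem:list} together with the computations behind Theorem~\ref{theorem:complement} attach to each entry \#$n$ of Bhupal--Ono~\cite{Bhupal-Ono-2012} a concrete $P$-resolution and a concrete embedding of the compactifying divisor $\widehat{E}_\infty$ into an iterated blow-up $Z_n$ of $\mathbb{CP}^2$ or $\mathbb{CP}^1\times\mathbb{CP}^1$, the task reduces to comparing the pairs $(Z_n,\widehat{E}_\infty)$ for the two entries in each pair and showing that the complements $Z_n-\widehat{E}_\infty$ are diffeomorphic even though the ambient embeddings (equivalently, the $P$-resolutions, equivalently the irreducible components of $\Def(X_0)$) are genuinely different.

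First I would observe that a necessary condition for two entries to give diffeomorphic fillings is that they belong to the same singularity (same link $L$), and that the corresponding Milnor fibers have the same Euler characteristic, equivalently the same Milnor number; so the first step is to scan the table produced in Theorem~\ref{theorem:list} for entries attached to the same singularity with coinciding Milnor numbers, which is how one locates the six candidate pairs in the first place. Second, for each such pair I would write down the two compactifying-divisor configurations inside their rational surfaces and compare the self-intersection sequences of the chains of $(-1)$-curves meeting $\widehat{E}_\infty$ (the data computed in Theorem~\ref{theorem:complement}): the point is to produce an explicit sequence of blow-ups and blow-downs transforming one pair $(Z,\widehat{E}_\infty)$ into the other while fixing a neighborhood of $\widehat{E}_\infty$, which then restricts to a diffeomorphism of the complements. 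Equivalently, one can argue on the filling side directly: present each filling by a Kirby diagram read off from the $P$-resolution (a linear plumbing plus some $(-1)$-handles) and carry out handle slides and cancellations to bring the two diagrams into agreement.

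The main obstacle I expect is the bookkeeping in the blow-up/blow-down comparison: the two surfaces $Z$ in a pair need not have the same Picard rank a priori, and even when they do, one must track carefully how the chain of $(-1)$-curves (and their intersection pattern with $\widehat{E}_\infty$) is rearranged, ensuring at every stage that the modification is supported away from $\widehat{E}_\infty$ so that the identification descends to the open complement. A clean way to organize this is to pass to the common minimal model ($\mathbb{CP}^2$ or $\mathbb{CP}^1\times\mathbb{CP}^1$), express each $\widehat{E}_\infty$ together with the relevant $(-1)$-curves as the total transform of a fixed configuration of lines/conics, and show the two configurations of infinitely-near points are related by a Cremona-type transformation or simply by a relabeling; then the diffeomorphism of complements is immediate. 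Finally I would double-check that the six pairs are exactly those satisfying the matching, i.e.\ that no other entries with equal Milnor numbers are diffeomorphic, by noting that for all remaining same-singularity same-Milnor-number entries the complements are distinguished by some additional invariant (for instance the intersection form on the filling, or the fundamental group of the boundary-capped manifold, or the homology of the filling), so that the list of six is complete.
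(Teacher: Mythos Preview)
Your approach would likely work but is far more elaborate than necessary, and it misses the key observation that makes the paper's proof essentially one line. The paper simply notes that in each of the six pairs, the two $P$-resolutions are \emph{symmetrical to each other}: the dual graph of the underlying singularity has an obvious involution (swapping the two identical arms in the tetrahedral cases), and the two $P$-resolutions in each pair are interchanged by this involution. Since applying a symmetry of the singularity to a $P$-resolution yields a $P$-resolution whose Milnor fiber is diffeomorphic to the original one, the corresponding minimal symplectic fillings are diffeomorphic. No blow-up/blow-down comparison, Kirby calculus, or Cremona analysis is needed; one just draws the two dual graphs side by side and observes they are mirror images.

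Two further remarks. First, the proposition only asserts the existence of six diffeomorphic pairs; the claim that these are \emph{all} such pairs is stated separately as a remark and is attributed to PPSU~\cite[Theorem~5.5]{PPSU-2015}, so your final paragraph about ruling out other coincidences is not required here. Second, your proposed strategy of matching Milnor numbers and then comparing $(Z,\widehat{E}_\infty)$ pairs is essentially what one does to \emph{identify} which BO entry corresponds to which $P$-resolution (this is the content of Section~\ref{section:Milnor-vs-filling}), but once that identification is made, the diffeomorphism for these particular six pairs follows immediately from symmetry rather than from any further computation on the rational-surface side.
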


\begin{proof}
From the list of $P$-resolutions in Section~\ref{section:list}, we find that there are 6 pairs of $P$-resolutions which are symmetrical to each other:

For $T_{6(5-2)+1}$ singularity,

\begin{multicols}{2}
\begin{itemize}
\item $T_{6(5-2)+1}[3] = BO~\#7$

\begin{tikzpicture}[scale=0.5]
\node[bullet] (00) at (0,0) [label=below:{$-2$}] {};
\node[rectangle] (10) at (1,0) [label=below:{$-2$}] {};

\node[rectangle] (20) at (2,0) [label=below:{$-5$}] {};

\node[bullet] (30) at (3,0) [label=below:{$-2$}] {};
\node[rectangle] (40) at (4,0) [label=below:{$-2$}] {};

\node[bullet] (21) at (2,1) [label=left:{$-2$}] {};

\draw [-] (00)--(10);
\draw [-] (10)--(20);
\draw [-] (20)--(30);
\draw [-] (30)--(40);
\draw [-] (20)--(21);
\end{tikzpicture}
\end{itemize}

\columnbreak

\begin{itemize}
\item $T_{6(5-2)+1}[4] = BO~\#8$

\begin{tikzpicture}[scale=0.5]
\node[rectangle] (00) at (0,0) [label=below:{$-2$}] {};
\node[bullet] (10) at (1,0) [label=below:{$-2$}] {};

\node[rectangle] (20) at (2,0) [label=below:{$-5$}] {};

\node[rectangle] (30) at (3,0) [label=below:{$-2$}] {};
\node[bullet] (40) at (4,0) [label=below:{$-2$}] {};

\node[bullet] (21) at (2,1) [label=left:{$-2$}] {};

\draw [-] (00)--(10);
\draw [-] (10)--(20);
\draw [-] (20)--(30);
\draw [-] (30)--(40);
\draw [-] (20)--(21);
\end{tikzpicture}
\end{itemize}
\end{multicols}

For $T_{6(6-2)+1}$ singularity,

\begin{multicols}{2}
\begin{itemize}
\item $T_{6(6-2)+1}[2] = BO~\#10$

\begin{tikzpicture}[scale=0.5]
\node[rectangle] (00) at (0,0) [label=below:{$-2$}] {};
\node[rectangle] (10) at (1,0) [label=below:{$-2$}] {};

\node[rectangle] (20) at (2,0) [label=below:{$-6$}] {};

\node[bullet] (30) at (3,0) [label=below:{$-2$}] {};
\node[rectangle] (40) at (4,0) [label=below:{$-2$}] {};

\node[bullet] (21) at (2,1) [label=left:{$-2$}] {};

\draw [-] (00)--(10);
\draw [-] (10)--(20);
\draw [-] (20)--(30);
\draw [-] (30)--(40);
\draw [-] (20)--(21);
\end{tikzpicture}
\end{itemize}

\columnbreak

\begin{itemize}
\item $T_{6(6-2)+1}[3] = BO~\#11$

\begin{tikzpicture}[scale=0.5]
\node[rectangle] (00) at (0,0) [label=below:{$-2$}] {};
\node[bullet] (10) at (1,0) [label=below:{$-2$}] {};

\node[rectangle] (20) at (2,0) [label=below:{$-6$}] {};

\node[rectangle] (30) at (3,0) [label=below:{$-2$}] {};
\node[rectangle] (40) at (4,0) [label=below:{$-2$}] {};

\node[bullet] (21) at (2,1) [label=left:{$-2$}] {};

\draw [-] (00)--(10);
\draw [-] (10)--(20);
\draw [-] (20)--(30);
\draw [-] (30)--(40);
\draw [-] (20)--(21);
\end{tikzpicture}
\end{itemize}
\end{multicols}

For $T_{6(3-2)+5}$ singularity,

\begin{multicols}{2}
\begin{itemize}
\item $T_{6(3-2)+5}[2] = BO~\#209$

\begin{tikzpicture}[scale=0.5]
\node[rectangle] (10) at (1,0) [label=below:{$-3$}] {};
\node[rectangle] (20) at (2,0) [label=below:{$-3$}] {};
\node[bullet] (30) at (3,0) [label=below:{$-3$}] {};

\node[bullet] (21) at (2,1) [label=left:{$-2$}] {};

\draw [-] (10)--(20);
\draw [-] (20)--(30);
\draw [-] (20)--(21);
\end{tikzpicture}
\end{itemize}

\columnbreak

\begin{itemize}
\item $T_{6(3-2)+5}[3] = BO~\#129$

\begin{tikzpicture}[scale=0.5]
\node[bullet] (10) at (1,0) [label=below:{$-3$}] {};
\node[rectangle] (20) at (2,0) [label=below:{$-3$}] {};
\node[rectangle] (30) at (3,0) [label=below:{$-3$}] {};

\node[bullet] (21) at (2,1) [label=left:{$-2$}] {};

\draw [-] (10)--(20);
\draw [-] (20)--(30);
\draw [-] (20)--(21);
\end{tikzpicture}
\end{itemize}
\end{multicols}

For $T_{6(4-2)+5}$ singularity,

\begin{multicols}{2}
\begin{itemize}
\item $T_{6(4-2)+5}[3] = BO~\#211$

\begin{tikzpicture}[scale=0.5]
\node[rectangle] (00) at (0,0) [label=below:{$-4$}] {};
\node[bullet] (10) at (1,0) [label=below:{$-1$}] {};
\node[rectangle] (20) at (2,0) [label=below:{$-5$}] {};
\node[bullet] (30) at (3,0) [label=below:{$-3$}] {};

\node[rectangle] (21) at (2,1) [label=left:{$-2$}] {};

\draw [-] (00)--(10);
\draw [-] (10)--(20);
\draw [-] (20)--(30);
\draw [-] (20)--(21);
\end{tikzpicture}
\end{itemize}

\columnbreak

\begin{itemize}
\item $T_{6(4-2)+5}[5] = BO~\#132$

\begin{tikzpicture}[scale=0.5]
\node[bullet] (10) at (1,0) [label=below:{$-3$}] {};
\node[rectangle] (20) at (2,0) [label=below:{$-5$}] {};
\node[bullet] (30) at (3,0) [label=below:{$-1$}] {};
\node[rectangle] (40) at (4,0) [label=below:{$-4$}] {};

\node[rectangle] (21) at (2,1) [label=left:{$-2$}] {};

\draw [-] (10)--(20);
\draw [-] (20)--(30);
\draw [-] (30)--(40);
\draw [-] (20)--(21);
\end{tikzpicture}
\end{itemize}
\end{multicols}
and
\begin{multicols}{2}
\begin{itemize}
\item $T_{6(4-2)+5}[4] = BO~\#213$

\begin{tikzpicture}[scale=0.5]
\node[rectangle] (00) at (0,0) [label=below:{$-4$}] {};
\node[bullet] (10) at (1,0) [label=below:{$-1$}] {};
\node[rectangle] (20) at (2,0) [label=below:{$-5$}] {};
\node[rectangle] (30) at (3,0) [label=below:{$-3$}] {};

\node[rectangle] (21) at (2,1) [label=left:{$-2$}] {};

\draw [-] (00)--(10);
\draw [-] (10)--(20);
\draw [-] (20)--(30);
\draw [-] (20)--(21);
\end{tikzpicture}
\end{itemize}

\columnbreak

\begin{itemize}
\item $T_{6(4-2)+5}[6] = BO~\#212$

\begin{tikzpicture}[scale=0.5]
\node[rectangle] (10) at (1,0) [label=below:{$-3$}] {};
\node[rectangle] (20) at (2,0) [label=below:{$-5$}] {};
\node[bullet] (30) at (3,0) [label=below:{$-1$}] {};
\node[rectangle] (40) at (4,0) [label=below:{$-4$}] {};

\node[rectangle] (21) at (2,1) [label=left:{$-2$}] {};

\draw [-] (10)--(20);
\draw [-] (20)--(30);
\draw [-] (30)--(40);
\draw [-] (20)--(21);
\end{tikzpicture}
\end{itemize}
\end{multicols}

For $T_{6(5-2)+5}$ singularity,

\begin{multicols}{2}
\begin{itemize}
\item $T_{6(5-2)+5}[3] = BO~\#214$

\begin{tikzpicture}[scale=0.5]
\node[rectangle] (10) at (1,0) [label=below:{$-3$}] {};
\node[rectangle] (20) at (2,0) [label=below:{$-5$}] {};
\node[bullet] (30) at (3,0) [label=below:{$-3$}] {};

\node[rectangle] (21) at (2,1) [label=left:{$-2$}] {};

\draw [-] (10)--(20);
\draw [-] (20)--(30);
\draw [-] (20)--(21);
\end{tikzpicture}
\end{itemize}

\columnbreak

\begin{itemize}
\item $T_{6(5-2)+5}[4] = BO~\#135$

\begin{tikzpicture}[scale=0.5]
\node[bullet] (10) at (1,0) [label=below:{$-3$}] {};
\node[rectangle] (20) at (2,0) [label=below:{$-5$}] {};
\node[rectangle] (30) at (3,0) [label=below:{$-3$}] {};

\node[rectangle] (21) at (2,1) [label=left:{$-2$}] {};

\draw [-] (10)--(20);
\draw [-] (20)--(30);
\draw [-] (20)--(21);
\end{tikzpicture}
\end{itemize}
\end{multicols}

Since two $P$-resolutions symmetrical to each other give us diffeomorphic Milnor fibers, the corresponding minimal symplectic fillings are  diffeomorphic to each other.
\end{proof}

\begin{remark}
PPSU~\cite[Theorem~5.5]{PPSU-2015} shows that the above are the only pairs consisting of diffeomorphic entries.
\end{remark}

\subsection*{Acknowledgements}

This is a part of M.S. Theses of BC and JK presented at Department of Mathematics, Chungnam National University, Daejeon, Korea in 2017. BC and JK were supported by Basic Science Research Program through the National Research Foundation of Korea funded by the Ministry of Education (NRF-2015R1D1A1A01060476). DS was supported by Basic Science Research Program through the National Research Foundation of Korea funded by the Ministry of Education (2013R1A1A2010613).

\section{Generalities on quotient surface singularities}
\label{section:generalities}

We briefly recall some basics on quotient surface singularities for fixing notations.

According to Riemenschneider~\cite{Riemenschneider-1981}, quotient surface singularities are classified by five types; cyclic, dihedral, tetrahedral, octahedral, icosahedral singularities. Especially, TOI-singularities are denoted by $T_{m}$, $O_{m}$, $I_{m}$, respectively. For the dual graphs of the minimal resolutions of non-cyclic quotient surface singularities, refer Bhupal-Ono~\cite{Bhupal-Ono-2012} for example. Instead we divide the dual graphs of the minimal resolutions of non-cyclic quotient surface singularities into three types as in Bhupal-Ono~\cite{Bhupal-Ono-2012}; Figure~\ref{figure:non-cyclic-minimal-resolution}. The dual graphs have one node representing a rational curve with self-intersection number $-b$ and three arms representing the minimal resolutions of cyclic quotient singularities.

\begin{figure}[t]
\centering
\subfloat [dihedral] {%
\begin{tikzpicture}[scale=0.75]
 \node[bullet] (00) at (0,0) [label=below:{$-2$}] {};
 \node[bullet] (10) at (1,0) [label=below:{$-b$}] {};
 \node[bullet] (20) at (2,0) [label=below:{$-b_1$}] {};

\node[empty] (250) at (2.5,0) [] {};
\node[empty] (30) at (3,0) [] {};

 \node[bullet] (350) at (3.5,0) [label=below:{$-b_{r}$}] {};

 \node[bullet] (11) at (1,1) [label=left:{$-2$}] {};

\draw [-] (00)--(10);
\draw [-] (10)--(20);
\draw [-] (20)--(250);
\draw [dotted] (20)--(350);
\draw [-] (30)--(350);

\draw [-] (10)--(11);
\end{tikzpicture}}
\\
\subfloat [type $(3,2)$] {%
\begin{tikzpicture}[scale=0.75]
 \node[bullet] (-10) at (-1,0) [label=below:{$-2$}] {};
 \node[bullet] (00) at (0,0) [label=below:{$-2$}] {};
 \node[bullet] (10) at (1,0) [label=below:{$-b$}] {};
 \node[bullet] (20) at (2,0) [label=below:{$-b_1$}] {};

\node[empty] (250) at (2.5,0) [] {};
\node[empty] (30) at (3,0) [] {};

 \node[bullet] (350) at (3.5,0) [label=below:{$-b_r$}] {};

 \node[bullet] (11) at (1,1) [label=left:{$-2$}] {};

\draw [-] (-10)--(00);
\draw [-] (00)--(10);
\draw [-] (10)--(20);
\draw [-] (20)--(250);
\draw [dotted] (20)--(350);
\draw [-] (30)--(350);

\draw [-] (10)--(11);
\end{tikzpicture}}
\qquad \qquad
\subfloat [type $(3,1)$] {%
\begin{tikzpicture}[scale=0.75]
 \node[bullet] (00) at (0,0) [label=below:{$-3$}] {};
 \node[bullet] (10) at (1,0) [label=below:{$-b$}] {};
 \node[bullet] (20) at (2,0) [label=below:{$-b_1$}] {};

\node[empty] (250) at (2.5,0) [] {};
\node[empty] (30) at (3,0) [] {};

 \node[bullet] (350) at (3.5,0) [label=below:{$-b_r$}] {};

 \node[bullet] (11) at (1,1) [label=left:{$-2$}] {};

\draw [-] (00)--(10);
\draw [-] (10)--(20);
\draw [-] (20)--(250);
\draw [dotted] (20)--(350);
\draw [-] (30)--(350);

\draw [-] (10)--(11);
\end{tikzpicture}}

\caption{The dual graphs of the minimal resolutions of non-cyclic quotient  singularities}
\label{figure:non-cyclic-minimal-resolution}
\end{figure}
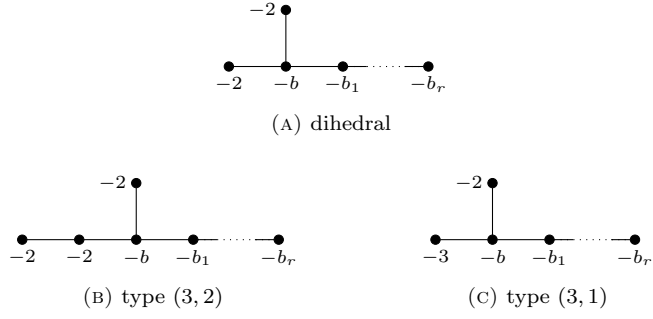

\subsection{Compactifying divisors}
\label{subsection:compactifying-divisor}

Let $X_0$ be a non-cyclic quotient singularity. Suppose the dual graphs of the minimal resolution of $X_0$ has three arms representing the minimal resolutions of cyclic quotient singularities of type $\frac{1}{n_i}(1,q_i)$ ($i=1,2,3$) whose dual graph is given as in Figure~\ref{figure:non-cyclic-minimal-resolution}. That is, we have
\begin{equation*}
\frac{n_i}{q_i} = [b_{i1}, \dotsc, b_{ir}],
\end{equation*}
where $[c_1, \dotsc, c_t]$ ($c_i \ge 2$) represents Hirzebruch-Jung continued fraction, i.e.,
\begin{equation*}
[c_1, \dotsc, c_t] := c_1 - \cfrac{1}{c_2-\cfrac{1}{\ddots - \cfrac{1}{c_t}}}.
\end{equation*}
Then the singular compactifying divisor $\overline{E}_{\infty}$ is a singular rational curve with three cyclic quotient singularities of dual type $\frac{1}{n_i}(1, n_i-q_i)$ ($i=1,2,3$); Pinkham~\cite{Pinkham-1977}. Therefore the dual graphs of the compactifying divisors $\widehat{E}_{\infty}$ of non-cyclic quotient surface singularities are given as in Figure~\ref{figure:dual-graph-of-compactifying-divisor}. Let $\widetilde{X}_0$ be the smooth surface obtained by resolving all singularities on the natural compactification $\overline{X}_0$. According to Pinkham~\cite{Pinkham-1977}, $\widetilde{X}_0$ has the configurations of rational curves whose dual graph is given as in Figure~\ref{figure:non-cyclic-widetilde(X)}, where
\begin{equation*}
\frac{n_i}{n_i-q_i} = [a_{i1}, \dotsc, a_{ie}].
\end{equation*}

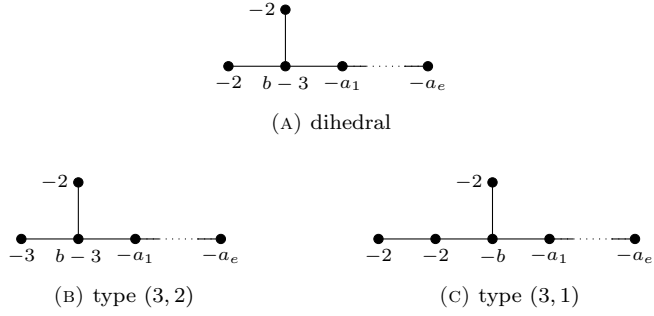
\begin{figure}[t]
\centering
\subfloat [dihedral] {%
\begin{tikzpicture}[scale=0.75]
 \node[bullet] (00) at (0,0) [label=below:{$-2$}] {};
 \node[bullet] (10) at (1,0) [label=below:{$b-3$}] {};
 \node[bullet] (20) at (2,0) [label=below:{$-a_1$}] {};

\node[empty] (250) at (2.5,0) [] {};
\node[empty] (30) at (3,0) [] {};

 \node[bullet] (350) at (3.5,0) [label=below:{$-a_e$}] {};

 \node[bullet] (11) at (1,1) [label=left:{$-2$}] {};

\draw [-] (00)--(10);
\draw [-] (10)--(20);
\draw [-] (20)--(250);
\draw [dotted] (20)--(350);
\draw [-] (30)--(350);

\draw [-] (10)--(11);
\end{tikzpicture}}
\\
\subfloat [type $(3,2)$] {%
\begin{tikzpicture}[scale=0.75]
 \node[bullet] (00) at (0,0) [label=below:{$-3$}] {};
 \node[bullet] (10) at (1,0) [label=below:{$b-3$}] {};
 \node[bullet] (20) at (2,0) [label=below:{$-a_1$}] {};

\node[empty] (250) at (2.5,0) [] {};
\node[empty] (30) at (3,0) [] {};

 \node[bullet] (350) at (3.5,0) [label=below:{$-a_e$}] {};

 \node[bullet] (11) at (1,1) [label=left:{$-2$}] {};

\draw [-] (00)--(10);
\draw [-] (10)--(20);
\draw [-] (20)--(250);
\draw [dotted] (20)--(350);
\draw [-] (30)--(350);

\draw [-] (10)--(11);
\end{tikzpicture}}
\qquad \qquad
\subfloat [type $(3,1)$] {%
\begin{tikzpicture}[scale=0.75]
 \node[bullet] (-10) at (-1,0) [label=below:{$-2$}] {};
 \node[bullet] (00) at (0,0) [label=below:{$-2$}] {};
 \node[bullet] (10) at (1,0) [label=below:{$-b$}] {};
 \node[bullet] (20) at (2,0) [label=below:{$-a_1$}] {};

\node[empty] (250) at (2.5,0) [] {};
\node[empty] (30) at (3,0) [] {};

 \node[bullet] (350) at (3.5,0) [label=below:{$-a_e$}] {};

 \node[bullet] (11) at (1,1) [label=left:{$-2$}] {};

\draw [-] (-10)--(00);
\draw [-] (00)--(10);
\draw [-] (10)--(20);
\draw [-] (20)--(250);
\draw [dotted] (20)--(350);
\draw [-] (30)--(350);

\draw [-] (10)--(11);
\end{tikzpicture}}

\caption{The dual graphs of the compactifying divisors of non-cyclic quotient  singularities}
\label{figure:dual-graph-of-compactifying-divisor}
\end{figure}

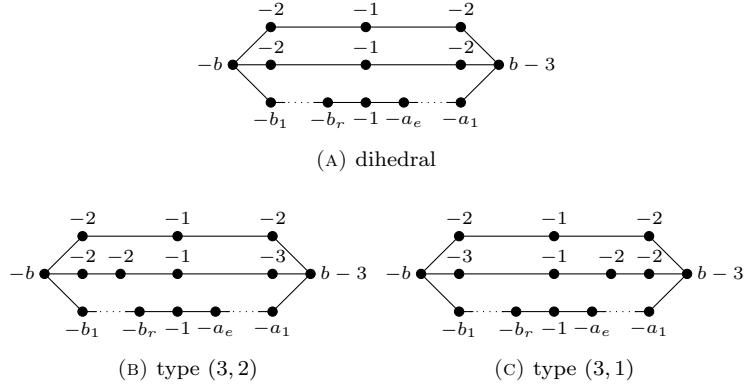
\begin{figure}[t]
\centering

\subfloat [dihedral] {%
\begin{tikzpicture}[scale=0.5]
 \node[bullet] (00) at (0,0) [label=left:{$-b$}] {};
 \node[bullet] (70) at (7,0) [label=right:{$b-3$}] {};

 \node[bullet] (11) at (1,1) [label=above:{$-2$}] {};

 \node[bullet] (351) at (3.5,1) [label=above:{$-1$}] {};

 \node[bullet] (61) at (6,1) [label=above:{$-2$}] {};

\draw [-] (00)--(11)--(351)--(61)--(70);

 \node[bullet] (10) at (1,0) [label=above:{$-2$}] {};

 \node[bullet] (350) at (3.5,0) [label=above:{$-1$}] {};

 \node[bullet] (60) at (6,0) [label=above:{$-2$}] {};

\draw [-] (00)--(10)--(350)--(60)--(70);

 \node[bullet] (1-1) at (1,-1) [label=below:{$-b_1$}] {};
\node[empty] (15-1) at (1.5,-1) [] {};
\node[empty] (2-1) at (2,-1) [] {};
 \node[bullet] (25-1) at (2.5,-1) [label=below:{$-b_r$}] {};

\draw [-] (00)--(1-1);
\draw [-] (1-1)--(15-1);
\draw [dotted] (1-1)--(25-1);
\draw [-] (2-1)--(25-1);

 \node[bullet] (35-1) at (3.5,-1) [label=below:{$-1$}] {};
 \node[bullet] (45-1) at (4.5,-1) [label=below:{$-a_e$}] {};
\node[empty] (5-1) at (5,-1) [] {};
\node[empty] (55-1) at (5.5,-1) [] {};
 \node[bullet] (6-1) at (6,-1) [label=below:{$-a_1$}] {};

\draw [-] (25-1)--(35-1);
\draw [-] (35-1)--(45-1);
\draw [-] (45-1)--(5-1);
\draw [dotted] (45-1)--(6-1);
\draw [-] (55-1)--(6-1);
\draw [-] (6-1)--(70);
\end{tikzpicture}}
\\
\subfloat [type $(3,2)$] {%
\begin{tikzpicture}[scale=0.5]
 \node[bullet] (00) at (0,0) [label=left:{$-b$}] {};
 \node[bullet] (70) at (7,0) [label=right:{$b-3$}] {};

 \node[bullet] (11) at (1,1) [label=above:{$-2$}] {};
 \node[bullet] (351) at (3.5,1) [label=above:{$-1$}] {};
 \node[bullet] (61) at (6,1) [label=above:{$-2$}] {};

\draw [-] (00)--(11)--(351)--(61)--(70);

 \node[bullet] (10) at (1,0) [label=above:{$-2$}] {};
 \node[bullet] (20) at (2,0) [label=above:{$-2$}] {};
 \node[bullet] (350) at (3.5,0) [label=above:{$-1$}] {};
 \node[bullet] (60) at (6,0) [label=above:{$-3$}] {};

\draw [-] (00)--(10)--(20)--(350)--(60)--(70);

 \node[bullet] (1-1) at (1,-1) [label=below:{$-b_1$}] {};
\node[empty] (15-1) at (1.5,-1) [] {};
\node[empty] (2-1) at (2,-1) [] {};
 \node[bullet] (25-1) at (2.5,-1) [label=below:{$-b_r$}] {};

\draw [-] (00)--(1-1);
\draw [-] (1-1)--(15-1);
\draw [dotted] (1-1)--(25-1);
\draw [-] (2-1)--(25-1);

 \node[bullet] (35-1) at (3.5,-1) [label=below:{$-1$}] {};
 \node[bullet] (45-1) at (4.5,-1) [label=below:{$-a_e$}] {};
\node[empty] (5-1) at (5,-1) [] {};
\node[empty] (55-1) at (5.5,-1) [] {};
 \node[bullet] (6-1) at (6,-1) [label=below:{$-a_1$}] {};

\draw [-] (25-1)--(35-1);
\draw [-] (35-1)--(45-1);
\draw [-] (45-1)--(5-1);
\draw [dotted] (45-1)--(6-1);
\draw [-] (55-1)--(6-1);
\draw [-] (6-1)--(70);
\end{tikzpicture}}
\subfloat [type $(3,1)$] {%
\begin{tikzpicture}[scale=0.5]
 \node[bullet] (00) at (0,0) [label=left:{$-b$}] {};
 \node[bullet] (70) at (7,0) [label=right:{$b-3$}] {};

 \node[bullet] (11) at (1,1) [label=above:{$-2$}] {};
 \node[bullet] (351) at (3.5,1) [label=above:{$-1$}] {};
 \node[bullet] (61) at (6,1) [label=above:{$-2$}] {};

\draw [-] (00)--(11)--(351)--(61)--(70);

 \node[bullet] (10) at (1,0) [label=above:{$-3$}] {};
 \node[bullet] (350) at (3.5,0) [label=above:{$-1$}] {};
 \node[bullet] (50) at (5,0) [label=above:{$-2$}] {};
 \node[bullet] (60) at (6,0) [label=above:{$-2$}] {};

\draw [-] (00)--(10)--(350)--(50)--(60)--(70);

 \node[bullet] (1-1) at (1,-1) [label=below:{$-b_1$}] {};
\node[empty] (15-1) at (1.5,-1) [] {};
\node[empty] (2-1) at (2,-1) [] {};
 \node[bullet] (25-1) at (2.5,-1) [label=below:{$-b_r$}] {};

\draw [-] (00)--(1-1);
\draw [-] (1-1)--(15-1);
\draw [dotted] (1-1)--(25-1);
\draw [-] (2-1)--(25-1);

 \node[bullet] (35-1) at (3.5,-1) [label=below:{$-1$}] {};
 \node[bullet] (45-1) at (4.5,-1) [label=below:{$-a_e$}] {};
\node[empty] (5-1) at (5,-1) [] {};
\node[empty] (55-1) at (5.5,-1) [] {};
 \node[bullet] (6-1) at (6,-1) [label=below:{$-a_1$}] {};

\draw [-] (25-1)--(35-1);
\draw [-] (35-1)--(45-1);
\draw [-] (45-1)--(5-1);
\draw [dotted] (45-1)--(6-1);
\draw [-] (55-1)--(6-1);
\draw [-] (6-1)--(70);
\end{tikzpicture}}

\caption{The dual graph of $\widetilde{X}_0$ for non-cyclic quotient singularities.}

\label{figure:non-cyclic-widetilde(X)}
\end{figure}

\subsection{$P$-resolutions and invariants of irreducible components}
\label{subsection:P-resolution-vs-invariant}

A \emph{singularity of class $T$} is a cyclic quotient singularity of type $\dfrac{1}{dn^2}(1, dna-1)$ with $d, a \ge 1$, $n \ge 2 $, $(n,a)=1$. A \emph{Wahl singularity} is a singularity of class $T$ with $d=1$. The dual graphs of singularities of class $T$ can be described inductively as follows:

\begin{enumerate}[(i)]
\item The singularities
\begin{tikzpicture}
 \node[bullet] (10) at (1,0) [label=above:{$-4$}] {};
\end{tikzpicture}
and
\begin{tikzpicture}[scale=0.5]
 \node[bullet] (10) at (1,0) [label=above:{$-3$}] {};
 \node[bullet] (20) at (2,0) [label=above:{$-2$}] {};

\node[empty] (250) at (2.5,0) [] {};
\node[empty] (30) at (3,0) [] {};

 \node[bullet] (350) at (3.5,0) [label=above:{$-2$}] {};
 \node[bullet] (450) at (4.5,0) [label=above:{$-3$}] {};

\draw [-] (10)--(20);
\draw [-] (20)--(250);
\draw [dotted] (20)--(350);
\draw [-] (30)--(350);
\draw [-] (350)--(450);
\end{tikzpicture}
are of class $T$

\item If the singularity
\begin{tikzpicture}[scale=0.5]
 \node[bullet] (20) at (2,0) [label=above:{$-b_1$}] {};

\node[empty] (250) at (2.5,0) [] {};
\node[empty] (30) at (3,0) [] {};

 \node[bullet] (350) at (3.5,0) [label=above:{$-b_r$}] {};

\draw [-] (20)--(250);
\draw [dotted] (20)--(350);
\draw [-] (30)--(350);
\end{tikzpicture}
is of class $T$, then so are
\begin{equation*}
\begin{tikzpicture}
 \node[bullet] (10) at (1,0) [label=above:{$-2$}] {};
 \node[bullet] (20) at (2,0) [label=above:{$-b_1$}] {};

\node[empty] (250) at (2.5,0) [] {};
\node[empty] (30) at (3,0) [] {};

 \node[bullet] (350) at (3.5,0) [label=above:{$-b_{r-1}$}] {};
 \node[bullet] (450) at (4.5,0) [label=above:{$-b_r-1$}] {};

\draw [-] (10)--(20);
\draw [-] (20)--(250);
\draw [dotted] (20)--(350);
\draw [-] (30)--(350);
\draw [-] (350)--(450);
\end{tikzpicture}
\text{~and~}
\begin{tikzpicture}
 \node[bullet] (10) at (1,0) [label=above:{$-b_1-1$}] {};
 \node[bullet] (20) at (2,0) [label=above:{$-b_2$}] {};

\node[empty] (250) at (2.5,0) [] {};
\node[empty] (30) at (3,0) [] {};

 \node[bullet] (350) at (3.5,0) [label=above:{$-b_r$}] {};
 \node[bullet] (450) at (4.5,0) [label=above:{$-2$}] {};

\draw [-] (10)--(20);
\draw [-] (20)--(250);
\draw [dotted] (20)--(350);
\draw [-] (30)--(350);
\draw [-] (350)--(450);
\end{tikzpicture}
\end{equation*}

\item Every singularity of class $T$ that is not a rational double point can be obtained by starting with one of the singularities described in (i) and iterating the steps described in (ii).
\end{enumerate}

\begin{definition}[{KSB~\cite[Definition~3.8]{KSB-1988}}, cf. Stevens~\cite{Stevens-1993}]
\label{definition:P-resolution}
A \emph{$P$-resolution} $f \colon Y_0 \rightarrow X_0$ of a quotient singularity $X_0$ is a modification such that $Y_0$ has at most rational double points or singularities of class $T$ as singularities, and $K_{Y_0} \cdot E_i >0$ for all exceptional divisors $E_i$ of $f$.
\end{definition}

\begin{remark}[cf.~{Stevens~\cite[p.8]{Stevens-1993}}]
Let $\widetilde{Y}_0 \to Y_0$ be the minimal resolutions of $Y_0$. Let $W_j$ be a neighborhood of a singularity of class $T$ $P_j$ (if any) on $Y_0$. Then the intersection condition $K_{Y_0} \cdot E_i > 0$ is equivalent to the following conditions: (i) Every $(-1)$-curve of $\widetilde{Y}_0$ intersects two exceptional curve $F_1$ and $F_2$ in $\widetilde{Y}_0$ of singularities $P_1$ and $P_2$ on $Y_0$. (ii) The sum of coefficients $c_1$ and $c_2$ of $F_1$ and $F_2$ in the canonical divisors $K_{W_1}$ and $K_{W_2}$ is less than $-1$.
\end{remark}

Stevens~\cite{Stevens-1991, Stevens-1993} provides an algorithm for finding all $P$-resolutions of a given non-cyclic quotient surface singularity and he presents the number of $P$-resolutions for each singularities.

\begin{proposition}[{Stevens~\cite[Table~1]{Stevens-1993}, cf.~PPSU~\cite[Remark~6.11]{PPSU-2015}}]
The number of $P$-resolutions of each TOI-singularities is given in Table~\ref{table:P-resolution}.
\end{proposition}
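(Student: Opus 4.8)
The plan is to reduce the statement to a combinatorial enumeration, organized by the finitely many families of tetrahedral, octahedral, and icosahedral singularities and carried out uniformly in the family parameter; this is the enumeration performed in Section~\ref{section:list}. Three ingredients are needed: (a) a dictionary between $P$-resolutions of a non-cyclic quotient singularity and certain ``decorated'' modifications of its star-shaped resolution graph; (b) a normal form showing such a modification is assembled from independent data along the three arms together with gluing data at the central curve; and (c) the enumeration itself, whose output is recorded with dual graphs and then matched against Stevens's Table~\ref{table:P-resolution}.

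For (a): given a $P$-resolution $f\colon Y_0\to X_0$, pass to the minimal resolution $\widetilde{Y}_0\to Y_0$. Since $\widetilde{Y}_0\to X_0$ factors through a sequence of blow-ups of the minimal resolution $\widetilde{X}_0$, and $Y_0$ is recovered from $\widetilde{Y}_0$ by contracting the disjoint linear chains of rational curves dual to the rational double points and class~$T$ singularities of $Y_0$ --- the admissible chains being exactly those built by the inductive rules recalled before Definition~\ref{definition:P-resolution} --- a $P$-resolution is the same datum as a blow-up $\pi\colon\widetilde{Y}_0\to\widetilde{X}_0$ together with a choice of disjoint such chains $Z_1,\dots,Z_k$ meeting the criterion of the Remark after Definition~\ref{definition:P-resolution}: every $(-1)$-curve of $\widetilde{Y}_0$ meets exactly two exceptional curves $F_1,F_2$ of class~$T$ chains among the $Z_j$, and the coefficients $c_1,c_2$ of $F_1,F_2$ in the relevant local canonical classes satisfy $c_1+c_2<-1$. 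I would prove this is a bijection onto $P$-resolutions up to isomorphism by the local check that contracting the $Z_j$ produces precisely rational double points and class~$T$ singularities and that $K_{Y_0}\cdot E_i>0$ for all exceptional $E_i$ of $f$ is equivalent to the stated inequality --- a computation confined to a neighbourhood of each $(-1)$-curve, where the remaining exceptional curves of $f$ live.

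For (b) and (c): for a tetrahedral, octahedral, or icosahedral singularity the resolution graph (Figure~\ref{figure:non-cyclic-minimal-resolution}) is a star with a central curve $C$ of self-intersection $-b$ and three arms whose continued fractions have the specific shapes given by Riemenschneider's classification~\cite{Riemenschneider-1981}; since every $Z_j$ is a linear chain, one checks that $\widetilde{Y}_0$ is again a star --- its central curve is the proper transform $C'$ of $C$ (after at most a controlled number of blow-ups on $C$) unless $C$ is contained in one of the chains $Z_j$. In either case the data along each arm is a modification of that arm whose induced contraction is a string of rational double points and class~$T$ singularities, that is, a $P$-resolution of the cyclic quotient singularity resolved by the arm; this is governed by the continued-fraction combinatorics of Stevens~\cite{Stevens-1991}, equivalently by the minimal model program algorithm of PPSU~\cite{PPSU-2015}. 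The $P$-resolutions of $X_0$ are then obtained by listing the admissible per-arm modifications and keeping exactly those triples compatible with the centre condition (allowing for a blow-up on $C$, or for $C$ itself being contracted). Carrying this out for each TOI-family, uniformly in its parameter, produces the $P$-resolutions displayed in Section~\ref{section:list}, and counting them reproduces Table~\ref{table:P-resolution}.

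I expect the main obstacle to be the gluing step, not anything conceptual: deciding which triples of per-arm data extend to an honest $P$-resolution of the whole star, tracking when a blow-up on $C$ is forced and when $C$ is absorbed into a class~$T$ chain, and handling the $(-1)$-curves that straddle two arms through the centre, for which the inequality $c_1+c_2<-1$ may fail and has to be verified directly. A secondary issue is exhaustiveness and the avoidance of double counting within the infinite families --- precisely the point at which the minor corrections to Stevens's original table recorded in PPSU~\cite[Remark~6.11]{PPSU-2015} enter, and which must be checked.
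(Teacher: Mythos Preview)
Your proposal is correct and aligns with the paper's approach: the paper does not give an independent proof of this proposition but attributes it to Stevens~\cite{Stevens-1993} (with the corrections of PPSU~\cite[Remark~6.11]{PPSU-2015}) and then verifies the count by the explicit enumeration of all $P$-resolutions in Section~\ref{section:list}, which is exactly what you point to. Your parts (a)--(c) amount to a clean summary of Stevens's method underlying that enumeration, and your identification of the gluing at the central curve as the delicate step is accurate --- this is indeed where the case analysis happens and where the handful of corrections to Stevens's original table arise.
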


\begin{table}
\begin{tabular}{l|cccccccc}
\toprule
Type & $n < 4$ & $n=4$ & $n=5$ & $n=6$ & $n=7$ & $n=8$ & $n=9$ & $n > 9$ \tabularnewline
\midrule
$T_{6(n-2)+1}$ & 1 & 2 & 4 & 3 & 1 & 1 & 1 & 1 \tabularnewline
$T_{6(n-3)+3}$ & 1 & 2 & 4 & 6 & 2 & 1 & 1 & 1 \tabularnewline
$T_{6(n-4)+5}$ &   & 2 & 4 & 6 & 4 & 1 & 1 & 1 \tabularnewline
\midrule
$O_{12(n-2)+1}$ & 1 & 2 & 4 & 3 & 2 & 1 & 1 & 1 \tabularnewline
$O_{12(n-3)+5}$ & 1 & 2 & 4 & 6 & 3 & 2 & 1 & 1 \tabularnewline
$O_{12(n-4)+7}$ &   & 2 & 4 & 4 & 5 & 3 & 2 & 2 \tabularnewline
$O_{12(n-5)+11}$&   &   & 3 & 5 & 4 & 4 & 2 & 2 \tabularnewline
\midrule
$I_{30(n-2)+1}$ & 1 & 2 & 4 & 3 & 2 & 2 & 1 & 1 \tabularnewline
$I_{30(n-3)+7}$ & 1 & 2 & 4 & 7 & 4 & 1 & 1 & 1 \tabularnewline
$I_{30(n-3)+11}$& 1 & 2 & 4 & 6 & 3 & 3 & 2 & 1 \tabularnewline
$I_{30(n-3)+13}$& 1 & 2 & 5 & 6 & 2 & 1 & 1 & 1 \tabularnewline
$I_{30(n-4)+17}$&   & 2 & 4 & 7 & 7 & 2 & 1 & 1 \tabularnewline
$I_{30(n-5)+19}$&   &   & 2 & 2 & 2 & 3 & 2 & 1 \tabularnewline
$I_{30(n-4)+23}$&   & 2 & 5 & 8 & 5 & 1 & 1 & 1 \tabularnewline
$I_{30(n-6)+29}$&   &   &   & 3 & 2 & 3 & 3 & 1 \tabularnewline
\bottomrule
\end{tabular}
\caption{The number of $P$-resolutions; {Stevens~\cite[Table~1]{Stevens-1993}, cf.~PPSU~\cite[Remark~6.11]{PPSU-2015}}}
\label{table:P-resolution}
\end{table}

In Section~\ref{section:list} we present all $P$-resolutions with their dual graphs of minimal resolutions.

Below are examples of $P$-resolutions of TOI-singularities. We present the dual graphs of the minimal resolutions of $P$-resolutions, where a connected linear chain of vertices decorated by a rectangle $\square$ denotes curves on the minimal resolution of a $P$-resolution which are contracted to a rational double point or a singularity of class $T$ on the $P$-resolution.

\begin{example}[$T_{6(5-2)+1}$-singularity]
\label{example:T_6(5-2)+1}
$T_{6(5-2)+1}$-singularity has four $P$-resolutions:

\begin{center}
\begin{tikzpicture}[scale=0.5]
\node[empty] (-21) at (-2,1) [] {[1]};

\node[rectangle] (-20) at (-2,0) [label=below:{$-2$}] {};
\node[rectangle] (-10) at (-1,0) [label=below:{$-2$}] {};
\node[bullet] (00) at (0,0) [label=below:{$-5$}] {};
\node[rectangle] (10) at (1,0) [label=below:{$-2$}] {};
\node[rectangle] (20) at (2,0) [label=below:{$-2$}] {};

\node[rectangle] (01) at (0,1) [label=left:{$-2$}] {};

\draw [-] (00)--(-10)--(-20);
\draw [-] (00)--(01);
\draw [-] (00)--(10)--(20);
\end{tikzpicture}
\begin{tikzpicture}[scale=0.5]
\node[empty] (-21) at (-2,1) [] {[2]};

\node[rectangle] (-20) at (-2,0) [label=below:{$-2$}] {};
\node[bullet] (-10) at (-1,0) [label=below:{$-2$}] {};
\node[rectangle] (00) at (0,0) [label=below:{$-5$}] {};
\node[bullet] (10) at (1,0) [label=below:{$-2$}] {};
\node[rectangle] (20) at (2,0) [label=below:{$-2$}] {};

\node[rectangle] (01) at (0,1) [label=left:{$-2$}] {};

\draw [-] (00)--(-10)--(-20);
\draw [-] (00)--(01);
\draw [-] (00)--(10)--(20);
\end{tikzpicture}
\begin{tikzpicture}[scale=0.5]
\node[empty] (-21) at (-2,1) [] {[3]};

\node[bullet] (-20) at (-2,0) [label=below:{$-2$}] {};
\node[rectangle] (-10) at (-1,0) [label=below:{$-2$}] {};
\node[rectangle] (00) at (0,0) [label=below:{$-5$}] {};
\node[bullet] (10) at (1,0) [label=below:{$-2$}] {};
\node[rectangle] (20) at (2,0) [label=below:{$-2$}] {};

\node[bullet] (01) at (0,1) [label=left:{$-2$}] {};

\draw [-] (00)--(-10)--(-20);
\draw [-] (00)--(01);
\draw [-] (00)--(10)--(20);
\end{tikzpicture}
\begin{tikzpicture}[scale=0.5]
\node[empty] (-21) at (-2,1) [] {[4]};

\node[rectangle] (-20) at (-2,0) [label=below:{$-2$}] {};
\node[bullet] (-10) at (-1,0) [label=below:{$-2$}] {};
\node[rectangle] (00) at (0,0) [label=below:{$-5$}] {};
\node[rectangle] (10) at (1,0) [label=below:{$-2$}] {};
\node[bullet] (20) at (2,0) [label=below:{$-2$}] {};

\node[bullet] (01) at (0,1) [label=left:{$-2$}] {};

\draw [-] (00)--(-10)--(-20);
\draw [-] (00)--(01);
\draw [-] (00)--(10)--(20);
\end{tikzpicture}
\end{center}

\end{example}

\begin{example}[$O_{12(3-2)+7}$-singularity]
\label{example:O_12(3-2)+7}
There are four $P$-resolutions for $O_{12(3-2)+7}$-singularity:

\begin{center}
\begin{tikzpicture}[scale=0.5]
\node[empty] (-21) at (-2,1) [] {[1]};

\node[rectangle] (-20) at (-2,0) [label=below:{$-2$}] {};
\node[rectangle] (-10) at (-1,0) [label=below:{$-2$}] {};
\node[bullet] (00) at (0,0) [label=below:{$-3$}] {};
\node[bullet] (10) at (1,0) [label=below:{$-4$}] {};

\node[rectangle] (01) at (0,1) [label=left:{$-2$}] {};

\draw [-] (00)--(-10)--(-20);
\draw [-] (00)--(01);
\draw [-] (00)--(10);
\end{tikzpicture}
\begin{tikzpicture}[scale=0.5]
\node[empty] (-21) at (-2,1) [] {[2]};

\node[rectangle] (-20) at (-2,0) [label=below:{$-2$}] {};
\node[rectangle] (-10) at (-1,0) [label=below:{$-2$}] {};
\node[bullet] (00) at (0,0) [label=below:{$-3$}] {};
\node[rectangle] (10) at (1,0) [label=below:{$-4$}] {};

\node[rectangle] (01) at (0,1) [label=left:{$-2$}] {};

\draw [-] (00)--(-10)--(-20);
\draw [-] (00)--(01);
\draw [-] (00)--(10);
\end{tikzpicture}
\begin{tikzpicture}[scale=0.5]
\node[empty] (-21) at (-2,1) [] {[3]};

\node[rectangle] (-20) at (-2,0) [label=below:{$-2$}] {};
\node[bullet] (-10) at (-1,0) [label=below:{$-2$}] {};
\node[rectangle] (00) at (0,0) [label=below:{$-3$}] {};
\node[rectangle] (10) at (1,0) [label=below:{$-4$}] {};

\node[rectangle] (01) at (0,1) [label=left:{$-2$}] {};

\draw [-] (00)--(-10)--(-20);
\draw [-] (00)--(01);
\draw [-] (00)--(10);
\end{tikzpicture}
\begin{tikzpicture}[scale=0.5]
\node[empty] (-21) at (-2,1) [] {[4]};

\node[bullet] (-20) at (-2,0) [label=below:{$-2$}] {};
\node[rectangle] (-10) at (-1,0) [label=below:{$-2$}] {};
\node[rectangle] (00) at (0,0) [label=below:{$-3$}] {};
\node[rectangle] (10) at (1,0) [label=below:{$-4$}] {};

\node[bullet] (01) at (0,1) [label=left:{$-2$}] {};

\draw [-] (00)--(-10)--(-20);
\draw [-] (00)--(01);
\draw [-] (00)--(10);
\end{tikzpicture}
\end{center}
\end{example}

\begin{example}[$I_{30(4-2)+17}$-singularity]
\label{example:I_30(4-2)+17}
It has seven $P$-resolutions:

\begin{center}
\begin{tikzpicture}[scale=0.5]
\node[empty] (-21) at (-2,1) [] {[1]};

\node[bullet] (-10) at (-1,0) [label=below:{$-3$}] {};
\node[bullet] (00) at (0,0) [label=below:{$-4$}] {};
\node[rectangle] (10) at (1,0) [label=below:{$-2$}] {};
\node[bullet] (20) at (2,0) [label=below:{$-3$}] {};

\node[rectangle] (01) at (0,1) [label=left:{$-2$}] {};

\draw [-] (00)--(-10);
\draw [-] (00)--(01);
\draw [-] (00)--(10)--(20);
\end{tikzpicture}
\begin{tikzpicture}[scale=0.5]
\node[empty] (-21) at (-2,1) [] {[2]};

\node[bullet] (-10) at (-1,0) [label=below:{$-3$}] {};
\node[rectangle] (00) at (0,0) [label=below:{$-4$}] {};
\node[bullet] (10) at (1,0) [label=below:{$-2$}] {};
\node[bullet] (20) at (2,0) [label=below:{$-3$}] {};

\node[bullet] (01) at (0,1) [label=left:{$-2$}] {};

\draw [-] (00)--(-10);
\draw [-] (00)--(01);
\draw [-] (00)--(10)--(20);
\end{tikzpicture}
\begin{tikzpicture}[scale=0.5]
\node[empty] (-21) at (-2,1) [] {[3]};

\node[rectangle] (-20) at (-2,0) [label=below:{$-4$}] {};
\node[bullet] (-10) at (-1,0) [label=below:{$-1$}] {};
\node[rectangle] (00) at (0,0) [label=below:{$-5$}] {};
\node[bullet] (10) at (1,0) [label=below:{$-2$}] {};
\node[bullet] (20) at (2,0) [label=below:{$-3$}] {};

\node[rectangle] (01) at (0,1) [label=left:{$-2$}] {};

\draw [-] (00)--(-10)--(-20);
\draw [-] (00)--(01);
\draw [-] (00)--(10)--(20);
\end{tikzpicture}
\begin{tikzpicture}[scale=0.5]
\node[empty] (-21) at (-2,1) [] {[4]};

\node[rectangle] (-20) at (-2,0) [label=below:{$-4$}] {};
\node[bullet] (-10) at (-1,0) [label=below:{$-1$}] {};
\node[rectangle] (00) at (0,0) [label=below:{$-5$}] {};
\node[rectangle] (10) at (1,0) [label=below:{$-2$}] {};
\node[bullet] (20) at (2,0) [label=below:{$-3$}] {};

\node[bullet] (01) at (0,1) [label=left:{$-2$}] {};

\draw [-] (00)--(-10)--(-20);
\draw [-] (00)--(01);
\draw [-] (00)--(10)--(20);
\end{tikzpicture}
\begin{tikzpicture}[scale=0.5]
\node[empty] (-21) at (-2,1) [] {[5]};

\node[bullet] (-10) at (-1,0) [label=below:{$-3$}] {};
\node[rectangle] (00) at (0,0) [label=below:{$-5$}] {};
\node[bullet] (10) at (1,0) [label=below:{$-1$}] {};
\node[rectangle] (20) at (2,0) [label=below:{$-3$}] {};
\node[rectangle] (30) at (3,0) [label=below:{$-3$}] {};

\node[rectangle] (01) at (0,1) [label=left:{$-2$}] {};

\draw [-] (00)--(-10);
\draw [-] (00)--(01);
\draw [-] (00)--(10)--(20)--(30);
\end{tikzpicture}
\begin{tikzpicture}[scale=0.5]
\node[empty] (-21) at (-2,1) [] {[6]};

\node[rectangle] (-10) at (-1,0) [label=below:{$-3$}] {};
\node[rectangle] (00) at (0,0) [label=below:{$-5$}] {};
\node[bullet] (10) at (1,0) [label=below:{$-1$}] {};
\node[rectangle] (20) at (2,0) [label=below:{$-3$}] {};
\node[rectangle] (30) at (3,0) [label=below:{$-3$}] {};

\node[rectangle] (01) at (0,1) [label=left:{$-2$}] {};

\draw [-] (00)--(-10);
\draw [-] (00)--(01);
\draw [-] (00)--(10)--(20)--(30);
\end{tikzpicture}
\begin{tikzpicture}[scale=0.5]
\node[empty] (-21) at (-2,1) [] {[7]};

\node[rectangle] (-20) at (-2,0) [label=below:{$-4$}] {};
\node[bullet] (-10) at (-1,0) [label=below:{$-1$}] {};
\node[rectangle] (00) at (0,0) [label=below:{$-5$}] {};
\node[rectangle] (10) at (1,0) [label=below:{$-3$}] {};
\node[bullet] (20) at (2,0) [label=below:{$-1$}] {};
\node[rectangle] (30) at (3,0) [label=below:{$-4$}] {};

\node[rectangle] (01) at (0,1) [label=left:{$-2$}] {};

\draw [-] (00)--(-10)--(-20);
\draw [-] (00)--(01);
\draw [-] (00)--(10)--(20)--(30);
\end{tikzpicture}

\end{center}
\end{example}

Let $f \colon Y \to X$ be a $P$-resolution. We have an induced map $F \colon \Def(Y) \to \Def(X)$ of deformation spaces by Wahl~\cite{Wahl-1976}, which is called as \emph{blowing-down deformations}. On the other hand, there is an irreducible subspace $\DefQG(Y) \subset \Def(Y)$ that corresponds to the $\mathbb{Q}$-Gorenstein deformations of singularities in $Y$.

\begin{proposition}[{KSB~\cite[Theorem~3.9]{KSB-1988}}]
\label{proposition:KSB-P-resolution-vs-components}
Let $X$ be a quotient surface singularity. Then

\begin{enumerate}
\item If $f \colon Y \to X$ is a $P$-resolution, then $F(\DefQG(Y))$ is an irreducible component of $\Def(X)$.

\item If $f_1 \colon Y_1 \to X$ and $f_2 \colon Y_2 \to X$ are two $P$-resolutions of $X$ that are not isomorphic over $X$, and if $F_1$ and $F_2$ are the corresponding maps of deformation spaces, then $F_1(\DefQG(Y_1)) \neq F_2(\DefQG(Y_2))$.

\item Every component of $\Def(X)$ arises in this way.
\end{enumerate}
\end{proposition}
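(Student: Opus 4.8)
The plan is to reduce the statement, following Koll\'ar--Shepherd-Barron~\cite{KSB-1988}, to two ingredients: the local $\mathbb{Q}$-Gorenstein deformation theory of singularities of class $T$, and a $3$-dimensional birational construction that transports smoothings of $X$ to smoothings of its $P$-resolutions and back.

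First I would set up the local and semilocal deformation theory. Each singularity of $Y_0$ is either a rational double point or of class $T$. Rational double points deform freely, and a singularity of class $T$ has unobstructed $\mathbb{Q}$-Gorenstein deformation functor because its index-one cover is a hypersurface singularity, so it admits an equivariant one-parameter smoothing; thus each local $\DefQG$ is smooth of a computable dimension. To globalize I would run the local-to-global spectral sequence for the $\mathbb{Q}$-Gorenstein deformation complex of $Y_0$ and use that $f\colon Y_0\to X_0$ is a partial resolution of a rational singularity (so $R^1f_\ast\mathcal{O}_{Y_0}=0$, and a cohomology-vanishing argument kills the global obstructions) to conclude that $\DefQG(Y_0)$ is smooth, hence irreducible. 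The map $F\colon\Def(Y_0)\to\Def(X_0)$ is Wahl's blowing-down of deformations~\cite{Wahl-1976}; restricting it to $\DefQG(Y_0)$ gives a morphism with smooth irreducible source, so $\overline{F(\DefQG(Y_0))}$ is irreducible. This is the easy half.

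The substance is the $3$-fold construction, which I would use to obtain (1), (2) and (3) simultaneously. Given a general point of $\overline{F(\DefQG(Y_0))}$ --- a $\mathbb{Q}$-Gorenstein smoothing $\mathcal{Y}\to\Delta$ of $Y_0$ blown down to a smoothing $\mathcal{X}\to\Delta$ of $X_0$ --- I would record that: (a)~$\mathcal{X}$ is a $\mathbb{Q}$-Gorenstein $3$-fold, log terminal by inversion of adjunction (since $X_0$ is log terminal and a Cartier divisor in $\mathcal{X}$) and smooth away from the origin; (b)~$\mathcal{Y}$ is $\mathbb{Q}$-factorial terminal, its singularities being exactly the terminal cyclic quotient $3$-fold singularities forced by the class-$T$ singularities of $Y_0$; and (c)~the hypothesis $K_{Y_0}\cdot E_i>0$ says precisely that $K_{\mathcal{Y}}$ is relatively ample over $\mathcal{X}$. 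Because this construction is canonical, I would run it in reverse and in families: starting from \emph{any} smoothing $\mathcal{X}\to\Delta$ of $X_0$ --- one exists in every component by Artin~\cite{Artin-1974}, as quotient singularities are rational --- I would produce, by a suitable run of the relative minimal model program over $\Delta$, a modification $\mathcal{Y}\to\mathcal{X}$ with $K_{\mathcal{Y}}$ relatively ample; the special fibre $Y_0=\mathcal{Y}_0$ should then be reduced with only rational double points and class-$T$ singularities, i.e.\ a $P$-resolution, with $\mathcal{Y}\to\Delta$ a $\mathbb{Q}$-Gorenstein smoothing of it blowing down to $\mathcal{X}\to\Delta$; this gives (3). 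For (1) I would argue that, since the reverse construction applies to every deformation of $\mathcal{X}\to\Delta$ in a neighbourhood, $F(\DefQG(Y_0))$ contains a dense subset of a component of $\Def(X_0)$, and a dimension comparison with the smooth source forces $\overline{F(\DefQG(Y_0))}$ to be that component. For (2) I would note that a general smoothing lying in the images of two $P$-resolutions would, by the canonicity of the construction, recover both $Y_1\to X_0$ and $Y_2\to X_0$, forcing an isomorphism over $X_0$ and a contradiction.

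The hard part will be the fibrewise analysis inside the $3$-fold construction: showing that the relative modification $\mathcal{Y}\to\mathcal{X}$ over $\Delta$ has reduced, irreducible special fibre (so that $Y_0\to X_0$ is genuinely a birational morphism of surfaces, not a degenerate fibre), and that the singularities appearing on $\mathcal{Y}$ --- equivalently on $Y_0$ --- are exactly rational double points and class-$T$ singularities and nothing worse, so that $\mathcal{Y}\to\Delta$ really is $\mathbb{Q}$-Gorenstein along the special fibre and $Y_0$ really is a $P$-resolution. This is where one needs the full structure theory of class-$T$ singularities (through their index-one covers) together with the classification of terminal cyclic quotient $3$-fold singularities of type $\tfrac{1}{n}(1,-1,a)$; it is the technical core of~\cite{KSB-1988}, and here I would simply invoke their analysis.
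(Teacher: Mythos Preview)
Your outline is a faithful sketch of the Koll\'ar--Shepherd-Barron argument, but note that the paper itself does not prove this proposition: it is stated as a citation of \cite[Theorem~3.9]{KSB-1988} and used as a black box. So there is no ``paper's own proof'' to compare against---the paper simply invokes the result, and your proposal correctly identifies where the actual proof lives and what its main ingredients are.
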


Since Milnor fibers are invariants of irreducible components of $\Def(X_0)$, there is a one-to-one correspondence between Milnor fibers and $P$-resolutions of $(X,0)$. So many invariants of Milnor fibers or the irreducible components of $\Def(X_0)$ can be computed using the corresponding $P$-resolutions.

\subsubsection{Dimensions of irreducible components}

For each $P$-resolutions in Theorem~\ref{theorem:list}, we compute the dimensions of the corresponding irreducible components of $\Def(X_0)$ in Section~\ref{section:list} using the following theorem:

\begin{theorem}[{KSB~\cite[Corollary 3.20]{KSB-1988}}]
Let $X_0$ be a quotient surface singularity, $f \colon Y_0 \to X_0$ be a $P$-resolution of $X_0$, $Q_i$ be its singularities on $Y_0$, and $B$ be the irreducible component of $\Def(X_0)$ corresponding to $Y_0$. Then $\dim{B} = \sum_{i=1}^r \dim{\Def(Y_0, Q_i)}+d$, where $d$ is the dimension of the space $D$ of locally trivial deformations of $Y_0$.
\end{theorem}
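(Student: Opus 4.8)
The assertion follows by combining Proposition~\ref{proposition:KSB-P-resolution-vs-components}, which identifies $B$ with $F(\DefQG(Y_0))$ for the blowing-down map $F \colon \Def(Y_0) \to \Def(X_0)$, with a local-to-global computation of $\dim \DefQG(Y_0)$. So the plan is: (1) show that $F$ restricted to $\DefQG(Y_0)$ preserves dimension, so that $\dim B = \dim \DefQG(Y_0)$; and (2) compute $\dim \DefQG(Y_0)$ by localizing at the singular points $Q_i$.

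\emph{Step 1.} I would produce a one-sided inverse to $F|_{\DefQG(Y_0)}$ over a dense open of $B$. A general point of $B$ corresponds to a smoothing $X \to \Delta$ of $X_0$ in that component; its total space $X$ is a $3$-fold with one isolated singular point, and near that point one can form its relative canonical model (relative minimal model) $Y \to X$ over $\Delta$. Using that $K_{Y_0}$ is $f$-ample --- the defining property of a $P$-resolution, see Definition~\ref{definition:P-resolution} --- and that the total space of a $\mathbb{Q}$-Gorenstein smoothing of a singularity of class $T$ is terminal, one checks that $Y$ is canonically determined by $X$, that its central fibre is the $P$-resolution $Y_0$, that the family $Y \to \Delta$ is $\mathbb{Q}$-Gorenstein, and that it blows down to $X \to \Delta$. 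This assigns to a general point of $B$ a point of $\DefQG(Y_0)$ mapped to it by $F$, so $F|_{\DefQG(Y_0)}$ is generically finite and $\dim B = \dim \DefQG(Y_0)$. (In fact, following KSB, this construction works in families and shows $F$ maps $\DefQG(Y_0)$ isomorphically onto $B$, but only the dimension count is needed here.)

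\emph{Step 2.} Let $\rho \colon \DefQG(Y_0) \to \prod_{i=1}^r \DefQG(Y_0,Q_i)$ be the localization map. Because the exceptional set of $f$ is one-dimensional and $X_0$ is Stein, the higher direct images $R^i f_*$ vanish for $i \ge 2$ and $X_0$ has no higher coherent cohomology, so $H^2(Y_0,\Theta_{Y_0}) = 0$; moreover $T^1_{Y_0}$ is supported on $\{Q_i\}$, hence has no first cohomology. From the local-to-global spectral sequence for the truncated cotangent complex one then obtains the exact sequence of Zariski tangent spaces
\[
0 \to H^1(Y_0,\Theta_{Y_0}) \to T_{\DefQG(Y_0)} \xrightarrow{\ d\rho\ } \bigoplus_{i=1}^r T_{\DefQG(Y_0,Q_i)} \to 0,
\]
and $\rho$ is smooth since its obstructions likewise lie in $H^2(Y_0,\Theta_{Y_0}) = 0$; its fibre over the base point is the smooth germ $D$ of locally trivial deformations of $Y_0$, with $d = \dim D = \dim H^1(Y_0,\Theta_{Y_0})$. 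Since moreover each local $\mathbb{Q}$-Gorenstein deformation space $\DefQG(Y_0,Q_i)$ is smooth --- trivially for a rational double point, where it coincides with $\Def(Y_0,Q_i)$, and by KSB's analysis of singularities of class $T$ in the remaining cases --- we conclude $\dim\DefQG(Y_0) = d + \sum_{i=1}^r \dim\DefQG(Y_0,Q_i)$.

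Combining Steps~1 and~2 gives $\dim B = \sum_{i=1}^r \dim\DefQG(Y_0,Q_i) + d$, which is the statement once $\Def(Y_0,Q_i)$ is read as the base of the $\mathbb{Q}$-Gorenstein deformation of the germ $(Y_0,Q_i)$ (equal to the full deformation space precisely when $Q_i$ is a rational double point). The main obstacle is Step~1: dimension-preservation of $F$ is not formal and rests on the behaviour of the relative minimal model program in the family $X \to \Delta$ together with the fact that $\mathbb{Q}$-Gorenstein smoothings of class $T$ singularities have terminal total spaces; the smoothness inputs in Step~2 --- especially unobstructedness of $\mathbb{Q}$-Gorenstein deformations of class $T$ singularities --- are likewise nontrivial, but all of these may be quoted from KSB~\cite{KSB-1988}.
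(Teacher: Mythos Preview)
The paper does not prove this theorem: it is stated as a citation from KSB~\cite[Corollary~3.20]{KSB-1988} and used as a black box to compute dimensions in the examples and in Section~\ref{section:list}. There is therefore no ``paper's own proof'' to compare against.

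That said, your outline is a faithful sketch of the KSB argument itself. Step~1 is exactly how KSB establish that $F$ is an isomorphism of $\DefQG(Y_0)$ onto the component $B$ (their Theorem~3.9, restated here as Proposition~\ref{proposition:KSB-P-resolution-vs-components}): the relative canonical model of the total space of a smoothing recovers the $P$-resolution on the central fibre, using $f$-ampleness of $K_{Y_0}$ and terminality of $\mathbb{Q}$-Gorenstein smoothings of class~$T$ singularities. Step~2 is the standard local-to-global computation, with the vanishing $H^2(Y_0,\Theta_{Y_0})=0$ giving smoothness of the localization map and hence additivity of dimensions. Your caveat about reading $\Def(Y_0,Q_i)$ as the $\mathbb{Q}$-Gorenstein deformation space is correct and worth keeping; the paper's subsequent Lemma (KSB~3.21) is stated precisely for that space.
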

Here $\dim{\Def(Y_0, Q_i)}$ and $d$ can be computed as follows:

\begin{lemma}[{KSB~\cite[Lemma 3.21]{KSB-1988}}]
If $Q$ is a singularity of class $T$ of type $\frac{1}{dna^2}(1, dna-1)$, then $\dim{\Def(Q)}=d$.
\end{lemma}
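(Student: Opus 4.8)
For a singularity $Q$ of class $T$, the deformation space appearing in the dimension formula above (that is, $\Def(Y_0,Q_i)$ when $Q_i=Q$) is the base of the versal $\mathbb{Q}$-Gorenstein deformation of $Q$, and the assertion is that this base is smooth of dimension $d$. The plan is to pass to the index-one cover of $Q$, where $\mathbb{Q}$-Gorenstein deformations become $\mu_n$-equivariant deformations of a hypersurface, and then count invariant deformation directions.

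\emph{Identifying the cover.} Write $Q=\mathbb{C}^{2}/\mu_{dn^{2}}$ with a generator $g$ of $\mu_{dn^{2}}$ acting by $g\cdot(u,v)=(\zeta u,\zeta^{dna-1}v)$, $\zeta$ a primitive $dn^{2}$-th root of unity. The action of $g$ on $du\wedge dv$ is multiplication by $\zeta^{dna}$, which has exact order $n$ because $(a,n)=1$; its kernel is $G'=\langle g^{n}\rangle$, of order $dn$, and one checks $g^{n}$ acts on $(u,v)$ as $(\zeta^{n}u,\zeta^{-n}v)$, the standard $A_{dn-1}$-action, so $\mathbb{C}^{2}/G'=A_{dn-1}=\{xy=z^{dn}\}$ via $x=u^{dn}$, $y=v^{dn}$, $z=uv$. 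Hence $Q=A_{dn-1}/\mu_{n}$ with $\mu_{n}=\mu_{dn^{2}}/G'$ acting linearly by $(x,y,z)\mapsto(\omega x,\omega^{-1}y,\omega^{a}z)$, $\omega$ a primitive $n$-th root; since this action is free away from the origin, $A_{dn-1}$ is the index-one cover of $Q$.

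\emph{Counting equivariant deformations.} By the standard description of $\mathbb{Q}$-Gorenstein deformations — a $\mathbb{Q}$-Gorenstein deformation of $Q$ is the $\mu_{n}$-quotient of a $\mu_{n}$-equivariant deformation of its index-one cover — it suffices to determine the versal $\mu_{n}$-equivariant deformation of $A_{dn-1}$. Being a hypersurface, $A_{dn-1}$ has $T^{2}=0$, so the equivariant theory is unobstructed and its base is smooth of dimension $\dim(T^{1}_{A_{dn-1}})^{\mu_{n}}$. The Tyurina algebra of $xy-z^{dn}$ shows $T^{1}_{A_{dn-1}}$ is spanned by the classes of $1,z,\dots,z^{dn-2}$, on which $\mu_{n}$ acts on $z^{k}$ with weight $\omega^{ak}$; since the equation $xy-z^{dn}$ is $\mu_{n}$-invariant, the invariant part is spanned by those $z^{k}$ with $\omega^{ak}=1$, i.e.\ with $n\mid k$. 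These exponents are $k=0,n,2n,\dots,(d-1)n$, and all satisfy $k\le dn-2$ precisely because $n\ge 2$, so there are exactly $d$ of them. Thus the versal $\mu_{n}$-equivariant deformation is $\{xy=z^{dn}+s_{d-1}z^{(d-1)n}+\dots+s_{1}z^{n}+s_{0}\}$ over a smooth $d$-dimensional base, and its $\mu_{n}$-quotient is the versal $\mathbb{Q}$-Gorenstein deformation of $Q$; hence $\dim\Def(Q)=d$.

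The one point that needs genuine care is the middle step: the identification of $\mathbb{Q}$-Gorenstein deformations of $Q$ with $\mu_{n}$-equivariant deformations of the index-one cover, and the compatibility of flatness and versality with passing to the $\mu_{n}$-quotient. This is standard for cyclic quotient singularities and is essentially built into the definition of $\mathbb{Q}$-Gorenstein deformation; granting it, the rest is the elementary cover identification of the previous paragraph together with the monomial count.
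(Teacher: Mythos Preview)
The paper does not give its own proof of this lemma; it is stated as a citation to Koll\'ar--Shepherd-Barron \cite[Lemma~3.21]{KSB-1988} and used as a black box in the dimension computations. Your argument is correct and is essentially the standard proof from KSB: pass to the index-one cover $A_{dn-1}$, identify $\mathbb{Q}$-Gorenstein deformations of $Q$ with $\mu_n$-equivariant deformations of the cover, and count $\mu_n$-invariants in the Tyurina algebra $\mathbb{C}[z]/(z^{dn-1})$. The cover computation, the weight count, and the unobstructedness via $T^2=0$ are all accurate. Two small remarks: the exponent in the statement should read $\tfrac{1}{dn^2}$ rather than $\tfrac{1}{dna^2}$ (a typo in the paper, which you silently corrected), and you are right that $\Def(Q)$ here means the $\mathbb{Q}$-Gorenstein deformation space, as the preceding theorem on $\dim B$ is phrased in terms of $\DefQG$.
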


\begin{lemma}[{Riemenschneider~\cite{Riemenschneider-1974}}; cf.~{KSB~\cite[Remark~3.23]{KSB-1988}}]
For a cyclic quotient surface singularity whose dual graph of the minimal resolution is given by
\begin{tikzpicture}[scale=0.5]
 \node[bullet] (20) at (2,0) [label=above:{$-b_1$}] {};

\node[empty] (250) at (2.5,0) [] {};
\node[empty] (30) at (3,0) [] {};

 \node[bullet] (350) at (3.5,0) [label=above:{$-b_r$}] {};

\draw [-] (20)--(250);
\draw [dotted] (20)--(350);
\draw [-] (30)--(350);
\end{tikzpicture}
the Artin component has dimension $\sum(b_i-1)$.
\end{lemma}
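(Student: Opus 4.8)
The plan is to identify the Artin component with the deformation space of the minimal resolution and then to compute the latter by a toric cohomology calculation.

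First I would reduce to a cohomology computation on the minimal resolution $\pi\colon\widetilde{X}_0\to X_0$, whose exceptional divisor $E=E_1+\dots+E_r$ is the chain of $\mathbb{P}^1$'s with $E_i^2=-b_i$. Since quotient singularities are rational (Artin~\cite{Artin-1974}), the Artin component is smooth of dimension $h^1(\widetilde{X}_0,\Theta_{\widetilde{X}_0})$: the blowing-down morphism $\Def(\widetilde{X}_0)\to\Def(X_0)$ of Wahl~\cite{Wahl-1976} carries $\Def(\widetilde{X}_0)$ isomorphically onto the Artin component, and $\Def(\widetilde{X}_0)$ is smooth because $H^2(\widetilde{X}_0,\Theta_{\widetilde{X}_0})=0$. (The latter, and more generally $H^j(\widetilde{X}_0,\mathcal{F})=H^0(X_0,R^j\pi_*\mathcal{F})$ for coherent $\mathcal{F}$ and $j\ge 1$, follows from the Leray spectral sequence: $X_0$ is affine and the fibres of $\pi$ are at most $1$-dimensional, so $R^2\pi_*=0$.) Thus it suffices to prove $h^1(\widetilde{X}_0,\Theta_{\widetilde{X}_0})=\sum_{i=1}^r(b_i-1)$.

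Next I would split off the main contribution via the logarithmic tangent sequence of the normal crossing divisor $E$,
\[
0\to\Theta_{\widetilde{X}_0}(-\log E)\to\Theta_{\widetilde{X}_0}\to\bigoplus_{i=1}^r\mathcal{O}_{E_i}(E_i)\to 0,\qquad\mathcal{O}_{E_i}(E_i)\cong\mathcal{O}_{\mathbb{P}^1}(-b_i).
\]
Since $b_i\ge 2$ one has $H^0(\mathcal{O}_{\mathbb{P}^1}(-b_i))=0$ and $H^1(\mathcal{O}_{\mathbb{P}^1}(-b_i))\cong\mathbb{C}^{\,b_i-1}$, while $H^2(\widetilde{X}_0,\Theta_{\widetilde{X}_0}(-\log E))=0$ by the Leray argument above; so the long exact sequence collapses to $0\to H^1(\widetilde{X}_0,\Theta_{\widetilde{X}_0}(-\log E))\to H^1(\widetilde{X}_0,\Theta_{\widetilde{X}_0})\to\bigoplus_i\mathbb{C}^{\,b_i-1}\to 0$, and the proof reduces to the vanishing $H^1(\widetilde{X}_0,\Theta_{\widetilde{X}_0}(-\log E))=0$. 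Here the cyclic hypothesis enters: $\widetilde{X}_0$ is the smooth toric surface whose fan has rays $v_0,v_1,\dots,v_{r+1}$ (with $v_0,v_{r+1}$ spanning the cone of $X_0$, $E_i$ the orbit closure of $v_i$, and $v_{i-1}+v_{i+1}=b_iv_i$). Writing $D=E_0+E+E_{r+1}$ for the full toric boundary, the log tangent sheaf along $D$ is canonically trivial, $\Theta_{\widetilde{X}_0}(-\log D)\cong\mathcal{O}_{\widetilde{X}_0}\otimes_{\mathbb{Z}}N\cong\mathcal{O}_{\widetilde{X}_0}^{\oplus 2}$, so $H^1(\widetilde{X}_0,\Theta_{\widetilde{X}_0}(-\log D))\cong H^1(\widetilde{X}_0,\mathcal{O}_{\widetilde{X}_0})^{\oplus 2}=0$ by rationality; and since $E_0,E_{r+1}\cong\mathbb{A}^1$, the quotients $\mathcal{O}_{E_0}(E_0),\mathcal{O}_{E_{r+1}}(E_{r+1})$ appearing in $0\to\Theta_{\widetilde{X}_0}(-\log D)\to\Theta_{\widetilde{X}_0}(-\log E)\to\mathcal{O}_{E_0}(E_0)\oplus\mathcal{O}_{E_{r+1}}(E_{r+1})\to 0$ have vanishing $H^1$, forcing $H^1(\widetilde{X}_0,\Theta_{\widetilde{X}_0}(-\log E))=0$.

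The hard part will be the bookkeeping in this last step around the non-properness of $\widetilde{X}_0$ — the two ``extra'' boundary curves $E_0,E_{r+1}$ are affine lines, so the normal sheaves on them have infinite-dimensional $H^0$ but, crucially, vanishing $H^1$ — together with invoking the comparison theorem identifying the (smooth) Artin component with $\Def(\widetilde{X}_0)$. An alternative closer to Riemenschneider's original argument~\cite{Riemenschneider-1974} avoids resolutions altogether: one writes $T^1_{X_0}$ directly from the Hirzebruch--Jung data $\tfrac{n}{q}=[b_1,\dots,b_r]$, notes that its negative-weight part for the natural $\mathbb{C}^\ast$-action has dimension $\sum_i(b_i-1)$, and checks that this part is unobstructed and sweeps out the Artin component; the obstacle there is the somewhat intricate explicit description of the graded pieces of $T^1$.
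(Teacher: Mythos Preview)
The paper does not prove this lemma; it is stated with a citation to Riemenschneider~\cite{Riemenschneider-1974} (cf.\ KSB~\cite[Remark~3.23]{KSB-1988}) and used as a black box in the dimension computations that follow. So there is no argument in the paper to compare yours against.

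Your proof is essentially correct and gives a clean toric/cohomological argument. One inaccuracy: the blowing-down map $\Def(\widetilde{X}_0)\to\Def(X_0)$ does \emph{not} in general carry $\Def(\widetilde{X}_0)$ isomorphically onto the Artin component. When some $b_i=2$ (and cyclic quotients certainly allow this---the $A_n$ singularities are cyclic), the map is a Galois cover by the Weyl group of the $(-2)$-configuration, not an isomorphism; for $A_1$ this is the classical $2:1$ base change needed for simultaneous resolution. Since that Weyl group is finite, the dimension identity $\dim(\text{Artin component})=\dim\Def(\widetilde{X}_0)=h^1(\widetilde{X}_0,\Theta_{\widetilde{X}_0})$ survives, and the rest of your computation (the log tangent sequence, the toric trivialization $\Theta(-\log D)\cong\mathcal{O}^{\oplus 2}$, and the vanishing of $H^1$ on the affine boundary lines) goes through unchanged. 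Your closing remark is apt: the alternative via the graded pieces of $T^1$ is indeed what Riemenschneider actually does.
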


\begin{example}[Continued from Example~\ref{example:O_12(3-2)+7}]
Let $X_0$ be the $O_{12(3-2)+7}$-singularity and let $B_1, \dotsc, B_4$ be the irreducible component of $\Def(X_0)$ corresponding to the $P$-resolutions $O_{12(3-2)+7}[1], \dotsc, O_{12(3-2)+7}[4]$. Then
\begin{align*}
\dim{B_1} &= (2-1) \times 3 + (3-1) + (4-1) = 8, \\
\dim{B_2} &= (2-1) \times 3 + (3-1) + 1 = 6, \\
\dim{B_3} &= (2-1) \times 2 + 2 = 4, \\
\dim{B_4} &= (2-1) \times 2 + 2 = 4.
\end{align*}
\end{example}

\subsubsection{Milnor numbers}

Let $X_0$ be a quotient surface singularity and let $M$ be a Milnor fiber of a smoothing $\pi \colon X \to \Delta$ of $X_0$. The \emph{Milnor number} of a smoothing $\pi$ is the second betti number of $M$.

For each $P$-resolutions in Theorem~\ref{theorem:list}, we compute their Milnor numbers in Section~\ref{section:list}. We explain briefly how to compute Milnor numbers using the corresponding $P$-resolution.

Let $f \colon Y_0 \to X_0$ be the $P$-resolution corresponding to $\pi \colon X \to \Delta$. According to Proposition~\ref{proposition:KSB-P-resolution-vs-components}, there is a smoothing $\phi \colon Y \to \Delta$ induced from $Q$-Gorenstein smoothings of singularities of $Y_0$ which blows down to $\pi \colon X \to \Delta$. Therefore two general fiber $Y_t$ and $X_t$ ($t \neq 0$) should be diffeomorphic, which implies that the Milnor number of $\pi$ can be computed by that of $\phi$.

On the other hand, a Milnor fiber $Y_t$ of $\phi \colon Y \to \Delta$ is just a rationally blowdown $4$-manifold from $Y_0$, that is, one cuts out neighborhoods of singularities of $Y_0$ and pastes back the corresponding Milnor fibers to the singularities. Hence we have:

\begin{lemma}
Let $f \colon Y_0 \to X_0$ be the $P$-resolution corresponding to a smoothing $\pi \colon X \to \Delta$ of $X_0$. Suppose $Q_i$ ($i=1,\dotsc,n$) are singularities of class $T$ (not rational double points) on $Y_0$ of type $\frac{1}{d_i n_i^2}(1,d_in_ia_i-1)$. Let $\widetilde{Y}_0$ be the minimal resolution of all singularities of $Y_0$. Then the Milnor number $\mu_{\pi}$ of $\pi$ is given by the following formula:
\begin{equation*}
\mu_{\pi} =
\begin{aligned}[t]
&\text{the number of irreducible divisors of $\widetilde{Y}_0$}\\
&\text{that are not contained in the minimal resolutions of $Q_i$'s}\\
&+ \sum_{i=1}^{n} (d_i-1)
\end{aligned}
\end{equation*}
\end{lemma}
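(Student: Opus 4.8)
The plan is to compute the second Betti number $b_2(Y_t)$ of a Milnor fiber $Y_t$ of the $\mathbb{Q}$-Gorenstein smoothing $\phi\colon Y\to\Delta$, using the rational-blowdown description of $Y_t$ already recorded in the paragraph preceding the statement. Since $Y_t$ and the Milnor fiber of $\pi$ are diffeomorphic, it suffices to work with $\phi$. First I would fix notation: let $\widetilde{Y}_0\to Y_0$ be the minimal resolution, let $N_i$ be a regular neighborhood of the exceptional configuration of $Q_i$ inside $\widetilde{Y}_0$ (a linear plumbing on the chain $[b_{i1},\dots,b_{ir_i}]$ of $\frac{1}{d_in_i^2}(1,d_in_ia_i-1)$), and let $B_i$ be the Milnor fiber of the $\mathbb{Q}$-Gorenstein smoothing of $Q_i$. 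Then $Y_t$ is obtained from $\widetilde{Y}_0$ by removing the interiors of the $N_i$ and gluing in the $B_i$ along the common lens-space boundary $L(d_in_i^2, d_in_ia_i-1)$.

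The key computation is a Mayer--Vietoris / additivity argument for the Euler characteristic combined with the known topology of $\widetilde{Y}_0$, each $N_i$, and each $B_i$. Write $Y_t = Y_0'\cup (\bigsqcup_i B_i)$ where $Y_0' = \widetilde{Y}_0\setminus\bigsqcup_i \mathrm{int}(N_i)$, glued along the lens spaces. Because lens spaces have $\chi=0$, we get $\chi(Y_t)=\chi(Y_0')+\sum_i\chi(B_i)$, and likewise $\chi(\widetilde{Y}_0)=\chi(Y_0')+\sum_i\chi(N_i)$. Now $\widetilde{Y}_0$ is a smooth rational surface whose $H_2$ has a basis given by the proper transforms of all exceptional curves of $\widetilde{Y}_0\to Y_0$ together with (enough classes to see that) $b_1=b_3=0$; concretely $\chi(\widetilde{Y}_0)=2+\#\{\text{irreducible exceptional curves of }\widetilde Y_0\}$. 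Each $N_i$ is a linear plumbing on $r_i$ spheres, so it is simply connected with $\chi(N_i)=1+r_i$. For the Milnor fiber $B_i$ of a class-$T$ singularity of type $\frac{1}{d_in_i^2}(1,d_in_ia_i-1)$: it is well known (e.g.\ from the Milnor-fiber-of-cyclic-quotient computation, or from the fact that the rational blowdown replacing a Wahl chain removes exactly those $r$ spheres and adds a rational homology ball) that $b_2(B_i)=d_i-1$ and $B_i$ is simply connected, whence $\chi(B_i)=1+(d_i-1)=d_i$. Assembling these: $\chi(Y_t)=\chi(\widetilde Y_0)-\sum_i\chi(N_i)+\sum_i\chi(B_i)=\big(2+E\big)-\sum_i(1+r_i)+\sum_i d_i$, where $E$ is the total number of irreducible exceptional curves of $\widetilde Y_0$. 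Since $Y_t$ is a Milnor fiber it is connected, $b_0=1$, and $b_1=0$ (Milnor fibers of rational surface singularities are simply connected, as $Y_0$ is rational); also $b_3=b_4=0$. Hence $b_2(Y_t)=\chi(Y_t)-1=1+E-\sum_i(1+r_i)+\sum_i d_i$. Finally observe that $E-\sum_i r_i$ is precisely the number of irreducible divisors of $\widetilde Y_0$ not contained in the minimal resolutions of the $Q_i$'s, call it $N_{\mathrm{ext}}$, so that $b_2(Y_t)=1+N_{\mathrm{ext}}-n+\sum_i d_i = N_{\mathrm{ext}}+\sum_i(d_i-1)$, which is the claimed formula for $\mu_\pi$.

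The main obstacle, and the step I would write most carefully, is justifying the two homological facts about $B_i$: that $B_i$ is simply connected and that $b_2(B_i)=d_i-1$. For a Wahl singularity ($d_i=1$) this is the statement that the $\mathbb{Q}$-Gorenstein Milnor fiber is a rational homology ball, which is classical (Wahl, Kollár--Shepherd-Barron); for general $d_i$ one reduces to this either by the inductive structure of class-$T$ chains or by directly invoking Lemma~3.21-type dimension/Milnor-number bookkeeping — in fact $d_i-1$ is exactly $\dim\Def(Q_i)-$ (number of $\mathbb{Q}$-Gorenstein moduli) handled in the earlier lemmas. One must also be careful that the gluing along lens spaces does not create $1$- or $3$-cycles: this follows from van Kampen (each $B_i$ and $Y_0'$ are simply connected, and lens spaces are connected) for $\pi_1$, and from Poincaré duality plus $b_1=0$ for $b_3$. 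A secondary subtlety is bookkeeping: one should note that rational double points among the singularities of $Y_0$ contribute $d_i=1$ hence nothing to $\sum(d_i-1)$, and that their resolution curves are by definition among the ``divisors contained in the minimal resolutions of the $Q_i$'s'', so the formula is consistent whether or not one lists them; the cleanest exposition restricts the sum to non-RDP class-$T$ points as in the statement and absorbs RDP resolution curves into the excluded set. Everything else is the additivity of $\chi$ across boundary-lens-space gluings, which is routine.
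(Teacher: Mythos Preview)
Your approach via the rational-blowdown description and Euler-characteristic additivity is exactly the argument the paper sketches in the sentence preceding the lemma; you have simply filled in the details. However, there is a pair of compensating arithmetic slips you should fix. The space $\widetilde{Y}_0$ is not closed: it is a regular neighborhood of the exceptional set over the germ $(X_0,0)$ and hence deformation retracts onto a connected tree of $E$ rational curves, so $\chi(\widetilde{Y}_0)=1+E$, not $2+E$. With the correct value your chain reads
\[
b_2(Y_t)=\chi(Y_t)-1=E-\sum_i(1+r_i)+\sum_i d_i=N_{\mathrm{ext}}-n+\sum_i d_i=N_{\mathrm{ext}}+\sum_i(d_i-1),
\]
which is the claimed formula. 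As you wrote it, the final equality $1+N_{\mathrm{ext}}-n+\sum_i d_i = N_{\mathrm{ext}}+\sum_i(d_i-1)$ is simply false (the two sides differ by $1$) and only lands on the right answer because of the earlier error in $\chi(\widetilde{Y}_0)$.

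A smaller point: your treatment of rational double points is muddled. In the lemma the $Q_i$ are by hypothesis the \emph{non}-RDP class-$T$ singularities, so RDP resolution curves belong to $N_{\mathrm{ext}}$, not to ``the divisors contained in the minimal resolutions of the $Q_i$'s'' as you write. The formula is still consistent, because smoothing an RDP replaces a plumbing on $r$ spheres by a Milnor fiber with $b_2=r$, a net-zero contribution to $\chi$; but the sentence claiming RDP curves are ``by definition'' among the $Q_i$-resolution curves contradicts the statement and should be removed.
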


\begin{example}[Continued from Example~\ref{example:O_12(3-2)+7}]
Let $X_0$ be the $O_{12(3-2)+7}$-singularity and let $\mu_1, \dotsc, \mu_4$ be the Milnor numbers corresponding to the $P$-resolutions. Then
\begin{align*}
\mu_1 &= 5, \\
\mu_2 &= 4 + (1-1) = 4, \\
\mu_3 &= 2 + (2-1) = 3, \\
\mu_4 &= 2 + (2-1) = 3.
\end{align*}
\end{example}

\subsection{$M$-resolutions}

There is another one-to-one correspondence between the components of $\Def(X)$ and certain partial resolutions of $X$, the so-called \emph{$M$-resolutions}. We apply the semi-stable minimal model program discussed in the next section to the smoothing $\widehat{Z} \to \Delta$ of the natural compactification $\widehat{Z}_0$ of the $M$-resolution $Z_0 \to X_0$ in order to identify the compactified Milnor fiber $\widehat{Z}_t$ as a rational complex surface. For details on $M$-resolutions, see Behnke--Christophersen~\cite{Behnke-Christophersen-1994} and PPSU~\cite[\S 6.2]{PPSU-2015}.

\begin{definition}[{Behnke--Christophersen~\cite[p.882]{Behnke-Christophersen-1994}}]
\label{definition:M-resolution}
An \emph{$M$-resolution} of a quotient surface singularity $X_0$ is a partial resolution $g \colon Z_0 \to X_0$ such that
\begin{enumerate}
\item $Z_0$ has only Wahl singularities.

\item $K_{Z_0}$ is nef relative to $g$, i.e., $K_{Z_0} \cdot E \ge 0$ for all $g$-exceptional curves $E$.
\end{enumerate}
\end{definition}

\begin{theorem}[{Behnke--Christophersen~\cite[3.1.4, 3.3.2, 3.4]{Behnke-Christophersen-1994}}]
Let $(X_0,0)$ be a quotient surface singularity. Then

\begin{enumerate}
\item Each $P$-resolution $Y_0 \to X_0$ is dominated by a unique $M$-resolution $Z_0 \to X_0$, i.e., there is a surjection $h \colon Z_0 \to Y_0$, with the property that $K_{Z_0} = g^{\ast}{K_{Y_0}}$.

\item There is a surjective map $\DefQG(Z_0) \to \DefQG(Y_0)$ induced by blowing down deformations.

\item There is a one-to-one correspondence between the components of $\Def(X_0)$ and $M$-resolutions of $X_0$.
\end{enumerate}
\end{theorem}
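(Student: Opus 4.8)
The statement is attributed to Behnke--Christophersen, so the plan is to reconstruct their argument, whose backbone is the local structure theory of class~$T$ singularities together with the KSB correspondence of Proposition~\ref{proposition:KSB-P-resolution-vs-components}. For part~(1) I would reduce to a local problem at the singular points of $Y_0$. A $P$-resolution has only rational double points (canonical, with crepant minimal resolution) and singularities of class~$T$, so the one new ingredient needed is: a class~$T$ singularity $Q$ of type $\frac{1}{dn^2}(1,dna-1)$ admits a \emph{unique} crepant partial resolution all of whose singularities are Wahl singularities, read off the inductive description of class~$T$ graphs in Section~\ref{subsection:P-resolution-vs-invariant} (the Wahl pieces being the $d$ blocks into which the chain splits). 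Gluing these local crepant partial resolutions, together with the minimal resolutions at the rational double points, yields $h\colon Z_0\to Y_0$ with $K_{Z_0}=h^{\ast}K_{Y_0}$ and $Z_0$ having only Wahl singularities. That $g=f\circ h$ is an $M$-resolution then follows by splitting the $g$-exceptional curves: an $h$-exceptional curve $E$ has $K_{Z_0}\cdot E = h^{\ast}K_{Y_0}\cdot E = K_{Y_0}\cdot h_{\ast}E = 0$, while a curve not contracted by $h$ satisfies $K_{Z_0}\cdot E = K_{Y_0}\cdot E_i > 0$ by the $P$-resolution condition; hence $K_{Z_0}$ is $g$-nef. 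Uniqueness of $Z_0$ is immediate from the local uniqueness, since any $M$-resolution dominating $Y_0$ crepantly restricts over each singular point to a crepant partial resolution by Wahl singularities.

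For part~(2) I would start from Wahl's blowing-down map $\Def(Z_0)\to\Def(Y_0)$; because $h$ is crepant it sends $\mathbb{Q}$-Gorenstein deformations to $\mathbb{Q}$-Gorenstein deformations, restricting to $\DefQG(Z_0)\to\DefQG(Y_0)$. Surjectivity is again local: a $\mathbb{Q}$-Gorenstein deformation of $(Y_0,Q)$ at a class~$T$ point lifts to a $\mathbb{Q}$-Gorenstein deformation of its crepant Wahl model (a comparison of the $\mathbb{Q}$-Gorenstein $T^1$'s of a class~$T$ singularity and of the chain of Wahl singularities dominating it), and the local lifts patch to a global $\mathbb{Q}$-Gorenstein deformation of $Z_0$ since the relevant obstructions — living in $H^2$ of the structure sheaf of the rational surface $Z_0$ — vanish.

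Part~(3) then follows formally. Conversely to~(1), an $M$-resolution $g\colon Z_0\to X_0$ has a relative canonical model $Y_0:=\Proj_{X_0}\bigoplus_{m\ge0}g_{\ast}\mathcal{O}_{Z_0}(mK_{Z_0})$; the induced $h\colon Z_0\to Y_0$ is crepant, $K_{Y_0}$ is ample relative to $f\colon Y_0\to X_0$ so that $K_{Y_0}\cdot E_i>0$, and the singularities of $Y_0$, being crepant contractions of Wahl singularities, are precisely rational double points or class~$T$ singularities; hence $Y_0\to X_0$ is a $P$-resolution, determined by $Z_0$. Combined with part~(1) this gives a bijection between $M$-resolutions and $P$-resolutions of $X_0$; composing it with the KSB bijection between $P$-resolutions and irreducible components of $\Def(X_0)$ (Proposition~\ref{proposition:KSB-P-resolution-vs-components}), and using part~(2) together with functoriality of blowing down to identify the component cut out by $\DefQG(Z_0)\to\Def(X_0)$ with the one attached to $Y_0$, yields the asserted one-to-one correspondence. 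The main obstacle throughout is the local analysis of class~$T$ singularities — existence and uniqueness of the crepant Wahl partial resolution, and the surjectivity-plus-gluing for $\mathbb{Q}$-Gorenstein deformations — which is the technical heart of Behnke--Christophersen; once that local package is granted, the rest is bookkeeping with the cone theorem for surfaces and the KSB correspondence.
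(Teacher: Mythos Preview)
The paper does not prove this theorem: it is stated as a result of Behnke--Christophersen with a citation to \cite[3.1.4, 3.3.2, 3.4]{Behnke-Christophersen-1994}, and no argument is given. So there is no ``paper's own proof'' to compare against.

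That said, your outline is a faithful reconstruction of the Behnke--Christophersen approach. The key local input you isolate---that a class~$T$ singularity $\frac{1}{dn^2}(1,dna-1)$ has a unique crepant partial resolution into a chain of $d$ Wahl singularities---is exactly their core computation, and your globalization via projection formula and relative canonical model is the standard packaging. One small note: in the theorem statement as printed, $K_{Z_0}=g^{\ast}K_{Y_0}$ is a typo (since $g\colon Z_0\to X_0$); you correctly read it as $h^{\ast}K_{Y_0}$. Your sketch is sound as a plan, with the honest caveat you already flag: the local lifting/surjectivity of $\mathbb{Q}$-Gorenstein deformations and the vanishing of the global obstructions are genuine work in \cite{Behnke-Christophersen-1994}, not formalities.
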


Refer PPSU~\cite[\S6.2]{PPSU-2015} for constructing the $M$-resolution corresponding to a given $P$-resolution. Here we give an example.

\begin{example}[Continued from Example~\ref{example:O_12(3-2)+7}]
\label{example:M-resolution-O_12(3-2)+7}
Let $X_0$ be the $O_{12(3-2)+7}$-singularity. The $M$-resolutions of $X_0$ are as follows:

\begin{center}
\begin{tikzpicture}[scale=0.5]
\node[empty] (-21) at (-2,1) [] {[1]};

\node[rectangle] (-20) at (-2,0) [label=below:{$-2$}] {};
\node[rectangle] (-10) at (-1,0) [label=below:{$-2$}] {};
\node[bullet] (00) at (0,0) [label=below:{$-3$}] {};
\node[bullet] (10) at (1,0) [label=below:{$-4$}] {};

\node[rectangle] (01) at (0,1) [label=left:{$-2$}] {};

\draw [-] (00)--(-10)--(-20);
\draw [-] (00)--(01);
\draw [-] (00)--(10);
\end{tikzpicture}
\begin{tikzpicture}[scale=0.5]
\node[empty] (-21) at (-2,1) [] {[2]};

\node[rectangle] (-20) at (-2,0) [label=below:{$-2$}] {};
\node[rectangle] (-10) at (-1,0) [label=below:{$-2$}] {};
\node[bullet] (00) at (0,0) [label=below:{$-3$}] {};
\node[rectangle] (10) at (1,0) [label=below:{$-4$}] {};

\node[rectangle] (01) at (0,1) [label=left:{$-2$}] {};

\draw [-] (00)--(-10)--(-20);
\draw [-] (00)--(01);
\draw [-] (00)--(10);
\end{tikzpicture}
\begin{tikzpicture}[scale=0.5]
\node[empty] (-21) at (-2,1) [] {[3]};

\node[rectangle] (-20) at (-2,0) [label=below:{$-2$}] {};
\node[bullet] (-10) at (-1,0) [label=below:{$-2$}] {};
\node[rectangle] (00) at (0,0) [label=below:{$-5$}] {};
\node[bullet] (10) at (1,0) [label=below:{$-1$}] {};
\node[rectangle] (20) at (2,0) [label=below:{$-2$}] {};
\node[rectangle] (30) at (3,0) [label=below:{$-5$}] {};

\node[rectangle] (01) at (0,1) [label=left:{$-2$}] {};

\draw [-] (00)--(-10)--(-20);
\draw [-] (00)--(01);
\draw [-] (00)--(10)--(20)--(30);
\end{tikzpicture}
\begin{tikzpicture}[scale=0.5]
\node[empty] (-21) at (-2,1) [] {[4]};

\node[bullet] (-20) at (-2,0) [label=below:{$-2$}] {};
\node[rectangle] (-10) at (-1,0) [label=below:{$-2$}] {};
\node[rectangle] (00) at (0,0) [label=below:{$-5$}] {};
\node[bullet] (10) at (1,0) [label=below:{$-1$}] {};
\node[rectangle] (20) at (2,0) [label=below:{$-2$}] {};
\node[rectangle] (30) at (3,0) [label=below:{$-5$}] {};

\node[bullet] (01) at (0,1) [label=left:{$-2$}] {};

\draw [-] (00)--(-10)--(-20);
\draw [-] (00)--(01);
\draw [-] (00)--(10)--(20)--(30);
\end{tikzpicture}
\end{center}
\end{example}

\subsection{Natural compactifications of partial resolutions}

Let $X_0$ be a non-cyclic quotient surface singularity. Let $\overline{X}_0$ and $\widehat{X}_0$ be the singular natural compactification and the natural compactification, respectively. Then one can extend the deformations $X \to \Delta$ of $X_0$, $\overline{X} \to \Delta$ of $\overline{X}_0$, and $\widehat{X} \to \Delta$ of $\widehat{X}_0$ to deformations of certain partial resolutions of $X$, $\overline{X}$, and $\widehat{X}$, respectively. We briefly recall how to construct them. For details, refer PPSU~\cite[\S7]{PPSU-2015}.

Let $f \colon Z_0 \to X$ be the $M$-resolution (or the $P$-resolution) of $X$ corresponding to the smoothing $X \to \Delta$. We take a partial resolution $\overline{f} \colon \overline{Z}_0 \to \overline{X}_0$ of $\overline{X}_0$ corresponding to $f$, that is, $\overline{f}|_{Z_0}=f$. Let $\widehat{Y}_0$ be the minimal resolution of the cyclic quotient surface singularities on $\overline{E}_{\infty} \subset \overline{Z}_0$, which is called the \emph{singular natural compactification} of $Z_0$.

On can show that the smoothing $Z \to \Delta$ extends to a deformation $\overline{Z} \to \Delta$ of $\overline{Z}_0$ which is again a locally trivial deformation near $\overline{E}_{\infty}$. Let $\widehat{Z} \to \Delta$ be the simultaneous resolution of the cyclic quotient surface singularities along $\overline{E}_{\infty}$ in each fiber of $\overline{Z} \to \Delta$. Then the deformations $\mathcal{Y}$, $\overline{\mathcal{Y}}$, and $\widehat{\smash[b]{\mathcal{Y}}}$ blow down to the deformations $\mathcal{X}$, $\overline{\mathcal{X}}$, and $\widehat{\smash[b]{\mathcal{X}}}$, which is called the \emph{natural compactification} of $Z_0$ as before. Then it is clear that
\begin{equation*}
\widehat{X}_t \cong \widehat{Z}_t
\end{equation*}
for $t \neq 0$, which are called the \emph{compactified Milnor fiber} of the smoothing $X \to \Delta$.

\section{Semi-stable minimal model program}
\label{section:MMP}

Let $X_0$ be a non-cyclic quotient surface singularity and let $X \to \Delta$ be a smoothing. Let $Z_0 \to X_0$ be the corresponding $M$-resolution. We will apply the semi-stable minimal model program to the smoothing $\widehat{Z} \to \Delta$ of the natural compactification $\widehat{Z}_0$ in order to identify the compactified Milnor fiber $\widehat{X}_t (= \widehat{Z}_t)$ as a rational complex surface; Proposition~\ref{proposition:smoothable-by-flips}. Hence we can identify the Milnor fiber $X_t=\widehat{X}_t - \widehat{E}_{\infty}$ as a complement the compactifying divisor $\widehat{E}_{\infty}$ embedded in a rational surface.

Two operations in the semi-stable minimal model that we apply to the smoothing $\widehat{Z} \to \Delta$ are the so-called \emph{Iitaka-Kodaira divisorial contractions} and \emph{(usual) flips}. Here we explain only how the operations modify the smoothing $\widehat{Z} \to \Delta$. We refer Koll\'ar-Mori~\cite{Kollar-Mori-1992}, HTU~\cite{Hacking-Tevelev-Urzua-2013}, Urz\'ua~\cite{Urzua-2013}, PPSU~\cite{PPSU-2015} for theoretical details.

\subsection{Iitaka-Kodaira divisorial contractions}

If there is a $(-1)$-curve in $\widehat{Z}_0$ which does not passing through any singularities on $\widehat{Z}_0$, one may apply \emph{(Iitaka-Kodaira) divisorial contractions} to $\widehat{Z} \to \Delta$. There are corresponding $(-1)$-curves to the $(-1)$-curve in $\widehat{Z}_0$ on each fibers $\widehat{Z}_t$ ($t \in \Delta$) of the smoothing $\widehat{Z} \to \Delta$. Applying a divisorial contraction to the $(-1)$-curves, we have a morphism $\widehat{Z} \to \widehat{Z}'$ that is induced by blowing down the $(-1)$-curves on each fibers $\widehat{Z}_t \to \widehat{Z}_t'$ ($t \in \Delta$). So we have a new smoothing $\widehat{Z}' \to \Delta$ of the new central fiber $Z_0'$.

\subsection{Usual flips}

The typical situation that we encounter with the so-called \emph{(usual) flip} is that there is a (singular) curve $C \subset \widehat{Z}_0$ passing through only one Wahl singularity $P$ such that the dual graph of the minimal resolution $f \colon \widehat{W}_0 \to \widehat{Z}_0$ of $P \in \widehat{Z}_0$ is given as follows
\begin{equation}
\label{equation:usual-flip}
\begin{tikzpicture}
 \node[bullet] (10) at (1,0) [label=below:{$-b_1$},label=above:{$E_1$}] {};
 \node[bullet] (20) at (2,0) [label=below:{$-b_2$},label=above:{$E_2$}] {};

\node[empty] (250) at (2.5,0) [] {};
\node[empty] (30) at (3,0) [] {};

 \node[bullet] (350) at (3.5,0) [label=below:{$-b_{r-1}$},label=above:{$E_{r-1}$}] {};
 \node[bullet] (450) at (4.5,0) [label=below:{$-b_r$},label=above:{$E_r$}] {};
 \node[bullet] (550) at (5.5,0) [label=below:{$-1$},label=above:{$\overline{C}$}] {};

\draw [-] (10)--(20);
\draw [-] (20)--(250);
\draw [dotted] (20)--(350);
\draw [-] (30)--(350);
\draw [-] (350)--(450)--(550);
\end{tikzpicture}
\end{equation}
where $E_i$ ($i=1,\dotsc,r$) are the exceptional divisors of $f$ and $\overline{C}$ is the proper image of $C$.

We then \emph{flip} $(C \subset \widehat{Z})$ so that we have a birational map
\begin{equation*}
(C \subset \widehat{Z}) \dashrightarrow (C^+ \subset \widehat{Z}^+)
\end{equation*}
where the pair $(C^+ \subset \widehat{Z}^+)$ of a new curve $C^+ \subset \widehat{Z}_0^+$ (inside a new (singular) surface $\widehat{Z}_0^+$) and a new smoothing $\widehat{Z}^+ \to \Delta$ of $\widehat{Z}_0^+$ is obtained as follows: Suppose that $i$ is the largest index satisfying $b_i \ge 3$ and $b_j=2$ for all $i < j \le  r$. We blow down $(-1)$-curves in $\widehat{W}_0$ repeatedly starting from the $(-1)$-curve $\overline{C}$ until $E_{i+1}$ so that we get a blown down surface $\widehat{W}_0^+$; then, we contract the linear chain consisting of the images of $E_2, \dotsc, E_i$ in $\widehat{W}_0^+$ to obtain a new singular surface $\widehat{Z}_0^+$; then the image of $E_1$ in $\widehat{Z}_0^+$ is the curve $C^+$ with the Wahl singularity $P^+ \in \widehat{Z}_0^+$ of type $\frac{1}{m^2}(1, ma-1)$ where $\frac{m^2}{ma-1}=[b_2, \dotsc, b_i-1]$. And $\widehat{Z}^+ \to \Delta$ is a $\mathbb{Q}$-Gorenstein smoothing of singularities of $\widehat{Z}_0^+$ including $Q$. In case $b_i=2$ for all $i \ge 2$, $C^+$ is the image of the blown down $E_1$ with $C^+ \cdot C^+ = -b_1+1$.

\subsection{Identifying Milnor fibers}

We will apply the above semi-stable minimal model program to the smoothing $\widehat{Z} \to \Delta$ of the natural compactification $\widehat{Z}_0$ of the $M$-resolution $Z_0 \to X_0$ corresponding to the given smoothing $X \to \Delta$.

\begin{proposition}[{PPSU~\cite[Theorem~9.4]{PPSU-2015}}]
\label{proposition:smoothable-by-flips}
One can run a sequence of Iitaka-Kodaira divisorial contractions and usual flips to $\widehat{Z} \to \Delta$ until we obtain a deformation $\widehat{W} \to \Delta$ whose central fiber $W_0$ is smooth.
\end{proposition}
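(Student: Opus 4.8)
The plan is to deduce the statement from the (semi-stable) minimal model program applied to the projective morphism $\widehat{Z} \to \Delta$, using in an essential way that the general fiber is rational. First I would fix the ambient geometry: since $\widehat{Z} \to \Delta$ is a $\mathbb{Q}$-Gorenstein smoothing of the surface $\widehat{Z}_0$, and $\widehat{Z}_0$ has only Wahl singularities (namely those of the $M$-resolution $Z_0$, the cyclic quotient points of $\overline{E}_{\infty}$ having been resolved), the total space $\widehat{Z}$ is a $\mathbb{Q}$-factorial threefold with only terminal cyclic quotient singularities, projective over $\Delta$. By Theorem~\ref{theorem:rational} the generic fiber $\widehat{Z}_{\eta}$ is a smooth rational surface, so $K_{\widehat{Z}_{\eta}}$ is not nef, and hence $K_{\widehat{Z}}$ is not nef relative to $\Delta$. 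I would then run the $K_{\widehat{Z}}$-minimal model program over $\Delta$ in the sense of Koll\'ar--Mori~\cite{Kollar-Mori-1992}, HTU~\cite{Hacking-Tevelev-Urzua-2013}, Urz\'ua~\cite{Urzua-2013}, and PPSU~\cite{PPSU-2015}, and identify its elementary steps: a $K_{\widehat{Z}}$-negative extremal ray over $\Delta$ either lies in the smooth locus of $\widehat{Z}$, so that its contraction blows down a family of $(-1)$-curves and one reads off an Iitaka--Kodaira divisorial contraction; or the locus of the ray meets a Wahl point $P$ of $\widehat{Z}_0$, and then the local classification of extremal neighbourhoods over a cyclic quotient terminal threefold singularity forces the contraction to be small, with the curve configuration on the minimal resolution of $P \in \widehat{Z}_0$ being exactly that of \eqref{equation:usual-flip}, that is, the step is a usual flip. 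Throughout, I would run the program so as to keep the relation $X_t = \widehat{Z}_t - \widehat{E}_{\infty}$ valid, so that the compactifying divisor $\widehat{E}_{\infty}$ persists (only the self-intersections of its components may change).

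For termination I would argue in two tiers rather than with a single monotone invariant. Attach to the central fiber the non-negative integer $\ell(\widehat{Z}_0) = \sum_j \ell_j$, where $\ell_j$ is the length of the exceptional chain of the $j$-th Wahl point of $\widehat{Z}_0$. An Iitaka--Kodaira contraction leaves $\ell$ unchanged but drops the Picard number $\rho(\widehat{Z}_t)$ of the general fiber by one; a usual flip replaces a Wahl point whose exceptional chain has length $r$ by one whose exceptional chain has length $i-1<r$ (possibly smooth), hence strictly decreases $\ell$. Since $\ell \ge 0$ there are only finitely many flips, and before the first flip, between consecutive flips, and after the last one only finitely many divisorial contractions are possible, because $\rho(\widehat{Z}_t)$ is a positive integer dropping by one at each such step. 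Hence the program halts after finitely many steps.

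It remains to see that when neither an Iitaka--Kodaira divisorial contraction nor a usual flip can be performed the central fiber is already smooth. Suppose not, and let $\sigma \colon \widehat{W}_0 \to \widehat{Z}_0$ be the minimal resolution; $\widehat{W}_0$ is a smooth rational surface (being birational to $\overline{X}_0$) containing the Wahl chains of $\widehat{Z}_0$. As $X_0$ is non-cyclic, $\widehat{E}_{\infty}$ has the shape of Figure~\ref{figure:dual-graph-of-compactifying-divisor}, in particular at least four components, and it supports a big divisor; together with at least one $\sigma$-exceptional curve this prevents $\widehat{W}_0$ from being $\mathbb{P}^2$ or a Hirzebruch surface, so $\widehat{W}_0$ carries a $(-1)$-curve $\Gamma$. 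If $\Gamma$ were disjoint from the $\sigma$-exceptional locus, it would descend to a $(-1)$-curve on $\widehat{Z}_0$ away from the singular points, giving an Iitaka--Kodaira contraction; so $\Gamma$ meets a Wahl chain, and a local analysis --- again using that the chains are Wahl (all self-intersections $\le -2$, discrepancies in $(-1,0)$) and that $\widehat{E}_{\infty}$ is big --- shows $\Gamma$ meets a single chain transversally in one of its two end vertices, which is precisely the curve $\overline{C}$ of a flip configuration \eqref{equation:usual-flip}: a contradiction.

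The main obstacle is the combination of the step identification of the first paragraph with the last paragraph: one must rule out pathological extremal contractions --- a divisor contracted to a curve through a Wahl point in a way not subsumed by the two allowed operations, the central fiber degenerating to a non-normal surface, a contraction to a point not of class $T$, or a premature Mori fiber space output while $\widehat{Z}_0$ is still singular --- and must show that the $(-1)$-curve produced on $\widehat{W}_0$ really yields either a free $(-1)$-curve on $\widehat{Z}_0$ or the configuration \eqref{equation:usual-flip}, rather than meeting a chain internally or meeting several chains. Both are settled by the local classification of extremal neighbourhoods over Wahl (cyclic quotient) singularities in HTU~\cite{Hacking-Tevelev-Urzua-2013} and Urz\'ua~\cite{Urzua-2013}, together with the global positivity of $\widehat{E}_{\infty}$ and the explicit shape of $\widehat{Z}_0$ read off from Figure~\ref{figure:non-cyclic-widetilde(X)} and the $M$-resolution; this is carried out in PPSU~\cite[Theorem~9.4]{PPSU-2015}, whose argument I would follow.
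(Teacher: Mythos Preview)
The paper does not give its own proof of this proposition; it simply quotes it as PPSU~\cite[Theorem~9.4]{PPSU-2015} and uses it as a black box. Your sketch is therefore not competing with any argument in this paper, and you yourself correctly identify it as an outline of the PPSU proof, deferring the delicate points to that reference.

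As a sketch of the PPSU argument, your outline is broadly faithful: run the semi-stable MMP over $\Delta$ on the $\mathbb{Q}$-factorial terminal threefold $\widehat{Z}$, classify the extremal steps, and argue termination and smoothness of the output. Two remarks are worth making. First, your dichotomy ``either an Iitaka--Kodaira divisorial contraction or a usual flip'' is not automatic from the general MMP: the local classification of extremal neighbourhoods in HTU~\cite{Hacking-Tevelev-Urzua-2013} and Urz\'ua~\cite{Urzua-2013} also allows divisorial contractions of flipping type (so-called k1A/k2A neighbourhoods), and PPSU needs a genuine argument, specific to this geometric setup (the $M$-resolution central fiber together with $\widehat{E}_{\infty}$), to exclude these and to guarantee that each small contraction is of the ``usual flip'' shape \eqref{equation:usual-flip}. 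You flag this as the main obstacle, which is right, but your sketch of how it is resolved (``local analysis'' forcing $\Gamma$ to hit a chain transversally at an end vertex) is thinner than what PPSU actually does. Second, your termination argument via the pair $(\ell,\rho)$ is cleaner than strictly necessary --- in PPSU termination is essentially immediate from general MMP termination for terminal threefolds --- but it is correct and harmless.
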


Since $\widehat{W} \to \Delta$ is a deformation with only smooth fibers, the central fiber $\widehat{W}_0$ is diffeomorphic to a general fiber $\widehat{W}_t$. Then one can get the data of positions of $(-1)$-curves in $W_t$ by comparing $W_0$ and $W_t$ because of the following proposition, which says how usual flips break down curves after running MMP.

\begin{proposition}[{PPSU~\cite[Proposition~8.9]{PPSU-2015}}]
\label{proposition:degenerationCurves}
Suppose that we are in the flipping case $(C \subset \widehat{Z})$.
Suppose that $\Gamma_0$ intersects transversally $C$ at one point, and that $F \colon C \subset \widehat{Z}_0 \subset \widehat{Z} \to 0 \in \Delta$ is a usual flip. Let $F \colon C^+ \subset \widehat{Z}_0^+ \subset \widehat{Z}^+ \to 0 \in \Delta$ be the flip, where $C^+=E_1$, and let $\Gamma^+ \subset \widehat{Z}^+$ be the proper transform of $\Gamma$. Then $\Gamma^+_0 = \Gamma_0 + C^+$.
\end{proposition}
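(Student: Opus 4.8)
The plan is to reduce the statement to a single multiplicity-one computation and then carry it out on a resolved, local model of the flip. Since a usual flip restricts to an isomorphism $\widehat{Z}\setminus C \xrightarrow{\ \sim\ } \widehat{Z}^{+}\setminus C^{+}$ over $\Delta$, the proper transform $\Gamma^{+}$ coincides with $\Gamma$ away from $C^{+}$, and hence its central fibre coincides with (the proper transform of) $\Gamma_{0}$ away from finitely many points of $C^{+}$. Since $\Gamma_{0}$ meets $C$ at one point and in particular does not contain $C$, the proper transform of $\Gamma_{0}$ in $\widehat{Z}^{+}_{0}$ does not contain $C^{+}$, so necessarily $\Gamma^{+}_{0}=\Gamma_{0}+\mu\,C^{+}$ for some integer $\mu\ge 0$ (I write $\Gamma_{0}$ also for its proper transform on the $\widehat{Z}^{+}$ side). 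The entire content of the proposition is then the assertion $\mu=1$, and because this is a statement local around $C^{+}$ we may replace $\widehat{Z}\to\Delta$ by a standard local model of the flip.

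The first step is to resolve. Let $g\colon \widehat{W}\to\widehat{Z}$ be the minimal resolution of the Wahl singularity $P\in\widehat{Z}_{0}$ carried out in the family: on the central fibre it inserts the chain $E_{1}\cup\dots\cup E_{r}$ of \eqref{equation:usual-flip}, with $\overline{C}$ the $(-1)$-curve meeting $E_{r}$, and on a general fibre $g$ is a composition of blow-ups whose centres converge to $P$ as $t\to 0$. By hypothesis $\Gamma_{0}$ meets $C$ transversally at one point $q$, and $q$ is a smooth point of $\widehat{Z}_{0}$ — this is the relevant case, e.g.\ when $\Gamma$ is a component of the compactifying divisor, which lies in the smooth locus. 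Since $P\notin\Gamma$, a whole neighbourhood of $P$ in $\widehat{Z}$ is disjoint from $\Gamma$, so $\Gamma_{t}$ avoids the blow-up centres of $g$ for $t$ near $0$. It follows that the proper transform $\widehat{\Gamma}:=g^{-1}_{*}\Gamma$ stays flat over $\Delta$, that $\widehat{\Gamma}_{0}=g^{-1}_{*}\Gamma_{0}$ with no exceptional junk, and that $\widehat{\Gamma}_{0}$ meets the exceptional configuration of $g$ only along $\overline{C}$, at one point, transversally, and is disjoint from $E_{1},\dots,E_{r}$.

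The second step is to run the flip from $\widehat{W}$ and track $\widehat{\Gamma}$. By the description in Section~\ref{section:MMP}, the flip is the blow-down of $\overline{C}$, then $E_{r},E_{r-1},\dots,E_{i+1}$ (with $i$ the largest index with $b_{i}\ge 3$), followed by the contraction of the residual chain — the images of $E_{2},\dots,E_{i}$ — producing the Wahl point $P^{+}$ on the curve $C^{+}$, the image of $E_{1}$. Following the central fibre is pure bookkeeping: blowing down $\overline{C}$ forces both the image of $\Gamma_{0}$ and the image of $E_{r}$ through the contracted point, so afterwards $\Gamma_{0}$ meets the image of $E_{r}$; blowing down that $(-1)$-curve makes $\Gamma_{0}$ meet the image of $E_{r-1}$; iterating, after the blow-down through $E_{i+1}$ the proper transform of $\Gamma_{0}$ meets the outer end $E_{i}$ of the residual $(-2)$-chain transversally at one point, and contracting that chain drags $\Gamma_{0}$ once through $P^{+}$. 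The transversality-at-one-point hypothesis is exactly what keeps every intersection number along this chain equal to $1$.

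What remains — and I expect this to be the main obstacle, everything before being formal — is to show that the final map $\widehat{W}^{+}\to\widehat{Z}^{+}$ (a family of resolutions of $P^{+}$) sends the proper transform of the surface $\widehat{\Gamma}^{+}$ to a surface whose central fibre is $\Gamma_{0}+C^{+}$; equivalently, that the flat limit in $\widehat{Z}^{+}$ of the transversal family $\{\Gamma_{t}\}_{t\ne 0}$ — whose special fibre now runs through the terminal singular point of the total space at $P^{+}$ — acquires the flipped curve $C^{+}$ with multiplicity exactly one. The cleanest way to settle this is to pass to the explicit local model of a usual flip (a standard toric, or one-parameter $\mathbb{Q}$-Gorenstein, model, as in the minimal model program for $3$-folds used in PPSU~\cite{PPSU-2015}), in which $\widehat{Z}$, $\widehat{Z}^{+}$, the curves $C$ and $C^{+}$, and a divisor $\Gamma$ meeting $C$ transversally at one point are all written down in coordinates; there $\Gamma^{+}_{0}=\Gamma_{0}+C^{+}$ becomes a chart-by-chart computation, and the extra multiplicity is precisely the local intersection number of $\Gamma_{0}$ with $C$, namely $1$. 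Combining this local verification with the global bookkeeping of the previous steps yields the proposition.
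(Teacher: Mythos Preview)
The paper does not prove this proposition at all: it is stated with a direct citation to PPSU~\cite[Proposition~8.9]{PPSU-2015} and used as a black box in the subsequent MMP examples. There is therefore no ``paper's own proof'' to compare against; any argument you supply is necessarily extra relative to this paper.

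As for your argument itself, the reduction to a single multiplicity computation and the bookkeeping through the chain of blow-downs are fine, but you explicitly defer the one substantive step --- computing the flat limit through the terminal point $P^{+}$ in an explicit local/toric model --- to ``a chart-by-chart computation'' that you do not actually perform. That is precisely the content of the cited result in PPSU, so your write-up is an outline that still relies on the external reference at the crucial point rather than an independent proof. If you want a self-contained argument, you need to either write down the toric flip explicitly (the $k2A$ model in Hacking--Tevelev--Urz\'ua~\cite{Hacking-Tevelev-Urzua-2013} or Koll\'ar--Mori~\cite{Kollar-Mori-1992}) and compute the limit there, or give an intersection-theoretic argument on $\widehat{Z}^{+}$ that pins down $\mu$ without coordinates.
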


Note that a usual flip changes only the central fiber. So, in order to track down how a general fiber is changed during the above MMP process, we need to take care of only divisorial contractions, which are just blow-downs of $(-1)$-curves on a general fiber. Hence:

\begin{proposition}[{PPSU~\cite[Corollary~9.5]{PPSU-2015}}]
A general fiber $\widehat{Z}_t$ ($t \neq 0$) of the smoothing $\widehat{Z} \to \Delta$ is obtained by a sequence of blowing-ups of a general fiber $\widehat{W}_t$ of the smoothing $\widehat{W} \to \Delta$ in Proposition~\ref{proposition:smoothable-by-flips}.
\end{proposition}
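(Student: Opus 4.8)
The plan is to reduce everything to the two elementary operations that make up the minimal model program of Proposition~\ref{proposition:smoothable-by-flips} and to track their effect on a general fiber one step at a time. By that proposition there is a finite chain of birational maps over $\Delta$
\begin{equation*}
\widehat{Z} = \widehat{Z}^{(0)} \dashrightarrow \widehat{Z}^{(1)} \dashrightarrow \cdots \dashrightarrow \widehat{Z}^{(N)} = \widehat{W},
\end{equation*}
in which each arrow $\widehat{Z}^{(k)} \dashrightarrow \widehat{Z}^{(k+1)}$ is either an Iitaka-Kodaira divisorial contraction or a usual flip, and $\widehat{W} \to \Delta$ has smooth central fiber. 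It therefore suffices to understand how each of these two moves changes the general fiber $\widehat{Z}^{(k)}_t$ for $t \neq 0$, and then compose.

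First I would dispose of the flips. By the construction recalled in Section~\ref{section:MMP}, a usual flip $(C \subset \widehat{Z}^{(k)}) \dashrightarrow (C^{+} \subset \widehat{Z}^{(k+1)})$ is a birational modification whose center, the curve $C$, lies inside the central fiber $\widehat{Z}^{(k)}_0$; more precisely, in the semi-stable minimal model program over the one-dimensional base $\Delta$ a flip is an isomorphism over $\Delta \setminus \{0\}$ (Koll\'ar-Mori~\cite{Kollar-Mori-1992}, HTU~\cite{Hacking-Tevelev-Urzua-2013}, PPSU~\cite{PPSU-2015}). Consequently $\widehat{Z}^{(k)}_t \cong \widehat{Z}^{(k+1)}_t$ for every $t \neq 0$; this is consistent with Proposition~\ref{proposition:degenerationCurves}, which records that a flip alters only the central fiber. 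So the flips contribute nothing to the comparison of general fibers.

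Next I would treat the divisorial contractions. By definition an Iitaka-Kodaira divisorial contraction $\widehat{Z}^{(k)} \to \widehat{Z}^{(k+1)}$ is obtained by blowing down, simultaneously in every fiber, a $(-1)$-curve; in particular on the general fiber it is a morphism $\widehat{Z}^{(k)}_t \to \widehat{Z}^{(k+1)}_t$ contracting a single $(-1)$-curve. Since every fiber of $\widehat{Z}^{(k)} \to \Delta$ is a smooth projective surface, Castelnuovo's contractibility criterion identifies this morphism with the blow-down of a $(-1)$-curve to a smooth point; equivalently, $\widehat{Z}^{(k)}_t$ is the blow-up of the smooth surface $\widehat{Z}^{(k+1)}_t$ at one point.

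Putting the pieces together and reading the chain from $\widehat{W}$ back to $\widehat{Z}$: each flip leaves the general fiber unchanged, while each divisorial contraction, read backwards, is a single blow-up at a point of the (smooth) general fiber. Hence $\widehat{Z}_t = \widehat{Z}^{(0)}_t$ is obtained from $\widehat{W}_t = \widehat{Z}^{(N)}_t$ by exactly as many blowing-ups as there are divisorial contractions in the sequence of Proposition~\ref{proposition:smoothable-by-flips}. The only point that genuinely needs care is the claim that the flipping steps are trivial on general fibers, i.e.\ that the flipping locus is contained in the central fiber — this is where one appeals to the structure of flips in the semi-stable minimal model program over $\Delta$ rather than to anything specific to quotient singularities; everything else is bookkeeping together with Castelnuovo's criterion.
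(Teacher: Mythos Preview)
Your argument is correct and matches the paper's reasoning exactly: the paper simply remarks that a usual flip changes only the central fiber, so only the divisorial contractions affect the general fiber, and these are blow-downs of $(-1)$-curves. One small slip: you write ``every fiber of $\widehat{Z}^{(k)} \to \Delta$ is a smooth projective surface,'' but the central fiber may still carry Wahl singularities; this does not matter for your conclusion since you only need smoothness of the general fiber (and, on the central fiber, the contracted $(-1)$-curve avoids the singular locus by hypothesis).
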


Finally one can get the data of intersections of $(-1)$-curves with the compactifying divisor $\widehat{E}_{\infty}$ in $\widehat{Z}_t$ by tracking the blow-downs $\widehat{Z}_t \to \widehat{W}_t$ given by flips and divisorial contractions. We will given some examples in detail in the next subsection.

\begin{definition}
For a given $P$-resolution $Z_0$, we call by the \emph{MMP $(-1)$-data} of $Z_0$ the data of intersections of $(-1)$-curves with the compactifying divisor $\widehat{E}_{\infty}$ in $\widehat{Z}_t$ obtained by applying the above flips and divisorial contractions.
\end{definition}

For each $P$-resolutions in Theorem~\ref{theorem:list}, we make a list of the MMP $(-1)$-data in Section~\ref{section:list}.

\subsection{Example}

We give some examples of running MMP to get the MMP $(-1)$-data from $M$-resolutions of quotient surface singularities.

\subsubsection{$T_{6(5-2)+1}$-singularity}

Let $X_0$ be the $T_{6(5-2)+1}$-singularity in Example~\ref{example:T_6(5-2)+1} and let $Y_0$ be the $P$-resolution $T_{6(5-2)+1}[4]$. Then its MMP $(-1)$-data is as follows, where the red curves are $(-1)$-curves.

\begin{center}
\includegraphics{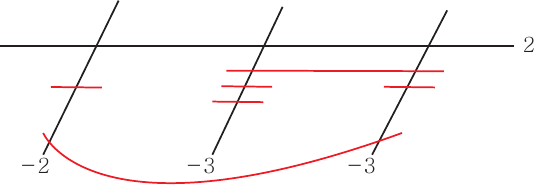}
\end{center}

We now briefly explain the procedures. Let $\widehat{Y}_0$ be the natural compactification and let $\widehat{Y} \to \Delta$ be the corresponding smoothing  of $\widehat{Y}_0$. The dual graphs of the central fiber $\widehat{Y}_0$ and a general fiber $\widehat{Y}_t$ of $\widehat{Y} \to \Delta$ is given as follows:

\begin{multicols}{2}
\begin{center}
\begin{tikzpicture}
    \node[empty] (-31) at (-3,1) [label=above:$\widehat{Y}_0$] {};

    \node[rectangle] (-30) at (-3,0) [label=left:$-5$]{};
    \node[bullet] (-20) at (-2,0) [label=below:$-2$] {};
    \node[bullet] (-21) at (-2,1) [label=below:$-2$] {};
    \node[rectangle] (-2-1) at (-2,-1) [label=below:$-2$] {};
    \node[bullet] (-1-1) at (-1,-1) [label=below:$-2$] {};
    \node[bullet] (0-1) at (0,-1) [label=below:$-1$, label=above:$E_3$] {};
    \node[bullet] (1-1) at (1,-1) [label=below:$-3$, label=above:$C$] {};
    \node[bullet] (-10) at (-1,0) [label=below:$-2$] {};
    \node[bullet] (00) at (0,0) [label=below:$-1$, label=above:$E_2$] {};
    \node[bullet] (10) at (1,0) [label=below:$-3$, label=above:$B$] {};
    \node[bullet] (01) at (0,1) [label=below:$-1$, label=above:$E_1$] {};
    \node[bullet] (11) at (1,1) [label=below:$-2$, label=above:$A$] {};
    \node[bullet] (20) at (2,0) [label=right:$2$] {};

    \draw[-] (-30)--(-21);
    \draw[-] (-21)--(01);
    \draw[-] (01)--(11);
    \draw[-] (11)--(20);
    \draw[-] (-30)--(-20);
    \draw[-] (-20)--(-10);
    \draw[-] (-10)--(00);
    \draw[-] (00)--(10);
    \draw[-] (10)--(20);
    \draw[-] (20)--(20);
    \draw[-] (-30)--(-2-1);
    \draw[-] (-2-1)--(-1-1);
    \draw[-] (-1-1)--(0-1);
    \draw[-] (0-1)--(1-1);
    \draw[-] (1-1)--(20);
\end{tikzpicture}
\end{center}

\columnbreak

\begin{center}
\begin{tikzpicture}
    \node[empty] (01) at (0,1) [label=above:$\widehat{Y}_t$] {};

    \node[bullet] (1-1) at (1,-1) [label=below:$-3$, label=above:$C$] {};
    \node[bullet] (10) at (1,0) [label=below:$-3$, label=above:$B$] {};
    \node[bullet] (11) at (1,1) [label=below:$-2$, label=above:$A$] {};
    \node[bullet] (20) at (2,0) [label=right:$2$] {};

    \draw[-] (11)--(20);
    \draw[-] (10)--(20);
    \draw[-] (1-1)--(20);
\end{tikzpicture}
\end{center}
\end{multicols}

Since three $(-1)$-curves $E_1$, $E_1$, $E_3$ do not pass through the singularity, they survives during the smoothing. Hence there are three $(-1)$-curves (denoted again by $E_i$'s) on a general fiber $\widehat{Y}_t$ with the same intersection data of $E_i$'s. Hence:

\begin{multicols}{2}
\begin{center}
\begin{tikzpicture}
    \node[empty] (-31) at (-3,1) [label=above:$\widehat{Y}_0$] {};

    \node[rectangle] (-30) at (-3,0) [label=left:$-5$]{};
    \node[bullet] (-20) at (-2,0) [label=below:$-2$] {};
    \node[bullet] (-21) at (-2,1) [label=below:$-2$] {};
    \node[rectangle] (-2-1) at (-2,-1) [label=below:$-2$] {};
    \node[bullet] (-1-1) at (-1,-1) [label=below:$-2$] {};
    \node[bullet] (0-1) at (0,-1) [label=below:$-1$, label=above:$E_3$] {};
    \node[bullet] (1-1) at (1,-1) [label=below:$-3$, label=above:$C$] {};
    \node[bullet] (-10) at (-1,0) [label=below:$-2$] {};
    \node[bullet] (00) at (0,0) [label=below:$-1$, label=above:$E_2$] {};
    \node[bullet] (10) at (1,0) [label=below:$-3$, label=above:$B$] {};
    \node[bullet] (01) at (0,1) [label=below:$-1$, label=above:$E_1$] {};
    \node[bullet] (11) at (1,1) [label=below:$-2$, label=above:$A$] {};
    \node[bullet] (20) at (2,0) [label=right:$2$] {};

    \draw[-] (-30)--(-21);
    \draw[-] (-21)--(01);
    \draw[-] (01)--(11);
    \draw[-] (11)--(20);
    \draw[-] (-30)--(-20);
    \draw[-] (-20)--(-10);
    \draw[-] (-10)--(00);
    \draw[-] (00)--(10);
    \draw[-] (10)--(20);
    \draw[-] (20)--(20);
    \draw[-] (-30)--(-2-1);
    \draw[-] (-2-1)--(-1-1);
    \draw[-] (-1-1)--(0-1);
    \draw[-] (0-1)--(1-1);
    \draw[-] (1-1)--(20);
\end{tikzpicture}
\end{center}

\columnbreak

\begin{center}
\begin{tikzpicture}
    \node[empty] (-11) at (-1,1) [label=above:$\widehat{Y}_t$] {};

    \node[bullet] (1-1) at (1,-1) [label=below:$-3$, label=above:$C$] {};
    \node[bullet] (10) at (1,0) [label=below:$-3$, label=above:$B$] {};
    \node[bullet] (11) at (1,1) [label=below:$-2$, label=above:$A$] {};
    \node[bullet] (20) at (2,0) [label=right:$2$] {};

    \node[bullet] (0-1) at (0,-1) [label=below:$-1$, label=above:$E_3$] {};
    \node[bullet] (00) at (0,0) [label=below:$-1$, label=above:$E_2$] {};
    \node[bullet] (01) at (0,1) [label=below:$-1$, label=above:$E_1$] {};

    \draw[-] (01)--(11)--(20);
    \draw[-] (00)--(10)--(20);
    \draw[-] (0-1)--(1-1)--(20);
\end{tikzpicture}
\end{center}
\end{multicols}

We now apply divisorial contractions to $E_1$, $E_2$, $E_3$. Then:

\begin{multicols}{2}

\begin{center}
\begin{tikzpicture}
    \node[empty] (-31) at (-3,1) [label=above:$\widehat{Y}_0$] {};

    \node[rectangle] (-30) at (-3,0) [label=left:$-5$]{};
    \node[bullet] (-20) at (-2,0) [label=below:$-2$] {};
    \node[bullet] (-21) at (-2,1) [label=below:$-1$] {};
    \node[rectangle] (-2-1) at (-2,-1) [label=below:$-2$] {};
    \node[bullet] (-1-1) at (-1,-1) [label=below:$-1$, label=above:$F$] {};
    \node[xmark] (0-1) at (0,-1) [] {};
    \node[bullet] (1-1) at (1,-1) [label=below:$-2$, label=above:$C$] {};
    \node[bullet] (-10) at (-1,0) [label=below:$-1$] {};
    \node[xmark] (00) at (0,0) [] {};
    \node[bullet] (10) at (1,0) [label=below:$-2$, label=above:$B$] {};
    \node[xmark] (01) at (0,1) [] {};
    \node[bullet] (11) at (1,1) [label=below:$-1$, label=above:$A$] {};
    \node[bullet] (20) at (2,0) [label=right:2] {};

    \draw[-] (-30)--(-21);
    \draw[-] (-21)--(01);
    \draw[-] (01)--(11);
    \draw[-] (11)--(20);
    \draw[-] (-30)--(-20);
    \draw[-] (-20)--(-10);
    \draw[-] (-10)--(00);
    \draw[-] (00)--(10);
    \draw[-] (10)--(20);
    \draw[-] (20)--(20);
    \draw[-] (-30)--(-2-1);
    \draw[-] (-2-1)--(-1-1);
    \draw[-] (-1-1)--(0-1);
    \draw[-] (0-1)--(1-1);
    \draw[-] (1-1)--(20);
\end{tikzpicture}
\end{center}

\columnbreak

\begin{center}
\begin{tikzpicture}
    \node[empty] (01) at (0,1) [label=above:$\widehat{Y}_t$] {};

    \node[bullet] (1-1) at (1,-1) [label=below:$-2$, label=above:$C$] {};
    \node[bullet] (10) at (1,0) [label=below:$-2$, label=above:$B$] {};
    \node[bullet] (11) at (1,1) [label=below:$-1$, label=above:$A$] {};
    \node[bullet] (20) at (2,0) [label=right:$2$] {};

    \draw[-] (11)--(20);
    \draw[-] (10)--(20);
    \draw[-] (1-1)--(20);
\end{tikzpicture}
\end{center}
\end{multicols}

We apply a usual flip to $F$. Then we have:

\begin{multicols}{2}

\begin{center}
\begin{tikzpicture}
    \node[empty] (-31) at (-3,1) [label=above:$\widehat{Y}_0$] {};

    \node[bullet] (-30) at (-3,0) [label=left:$-4$, label=above:$F^+$]{};
    \node[bullet] (-20) at (-2,0) [label=below:$-2$] {};
    \node[bullet] (-21) at (-2,1) [label=below:$-1$, label=above:$E_4$] {};
    \node[xmark] (-2-1) at (-2,-1) [] {};
    \node[xmark] (-1-1) at (-1,-1) [] {};
    \node[xmark] (0-1) at (0,-1) [] {};
    \node[bullet] (1-1) at (1,-1) [label=below:$0$, label=above:$C$] {};
    \node[bullet] (-10) at (-1,0) [label=below:$-1$, label=above:$E_5$] {};
    \node[xmark] (00) at (0,0) [] {};
    \node[bullet] (10) at (1,0) [label=below:$-2$, label=above:$B$] {};
    \node[xmark] (01) at (0,1) [] {};
    \node[bullet] (11) at (1,1) [label=below:$-1$, label=above:$A$] {};
    \node[bullet] (20) at (2,0) [label=right:2] {};

    \draw[-] (-30)--(-21);
    \draw[-] (-21)--(01);
    \draw[-] (01)--(11);
    \draw[-] (11)--(20);
    \draw[-] (-30)--(-20);
    \draw[-] (-20)--(-10);
    \draw[-] (-10)--(00);
    \draw[-] (00)--(10);
    \draw[-] (10)--(20);
    \draw[-] (20)--(20);
    \draw[-] (-30)--(-2-1);
    \draw[-] (-2-1)--(-1-1);
    \draw[-] (-1-1)--(0-1);
    \draw[-] (0-1)--(1-1);
    \draw[-] (1-1)--(20);
\end{tikzpicture}
\end{center}

\columnbreak

\begin{center}
\begin{tikzpicture}
    \node[empty] (01) at (0,1) [label=above:$\widehat{Y}_t$] {};

    \node[bullet] (1-1) at (1,-1) [label=below:$-2$, label=above:$C$] {};
    \node[bullet] (10) at (1,0) [label=below:$-2$, label=above:$B$] {};
    \node[bullet] (11) at (1,1) [label=below:$-1$, label=above:$A$] {};
    \node[bullet] (20) at (2,0) [label=right:$2$] {};

    \draw[-] (11)--(20);
    \draw[-] (10)--(20);
    \draw[-] (1-1)--(20);
\end{tikzpicture}
\end{center}

\end{multicols}

According to Proposition~\ref{proposition:degenerationCurves}, the curve $C$ in a general fiber $\widehat{Y}_t$ degenerates to two curves $C \cup F^+$ in the central fiber $\widehat{Y}_0$. Since there are two $(-1)$-curves $E_4$ and $E_5$ in $\widehat{Y}_0$, we have also two $(-1)$-curves, denoted again by $E_4$ and $E_5$, in $\widehat{Y}_t$:

\begin{multicols}{2}

\begin{center}
\begin{tikzpicture}
    \node[empty] (-31) at (-3,1) [label=above:$\widehat{Y}_0$] {};

    \node[bullet] (-30) at (-3,0) [label=left:$-4$, label=above:$F^+$]{};
    \node[bullet] (-20) at (-2,0) [label=below:$-2$] {};
    \node[bullet] (-21) at (-2,1) [label=below:$-1$, label=above:$E_4$] {};
    \node[xmark] (-2-1) at (-2,-1) [] {};
    \node[xmark] (-1-1) at (-1,-1) [] {};
    \node[xmark] (0-1) at (0,-1) [] {};
    \node[bullet] (1-1) at (1,-1) [label=below:$0$, label=above:$C$] {};
    \node[bullet] (-10) at (-1,0) [label=below:$-1$, label=above:$E_5$] {};
    \node[xmark] (00) at (0,0) [] {};
    \node[bullet] (10) at (1,0) [label=below:$-2$, label=above:$B$] {};
    \node[xmark] (01) at (0,1) [] {};
    \node[bullet] (11) at (1,1) [label=below:$-1$, label=above:$A$] {};
    \node[bullet] (20) at (2,0) [label=right:2] {};

    \draw[-] (-30)--(-21);
    \draw[-] (-21)--(01);
    \draw[-] (01)--(11);
    \draw[-] (11)--(20);
    \draw[-] (-30)--(-20);
    \draw[-] (-20)--(-10);
    \draw[-] (-10)--(00);
    \draw[-] (00)--(10);
    \draw[-] (10)--(20);
    \draw[-] (20)--(20);
    \draw[-] (-30)--(-2-1);
    \draw[-] (-2-1)--(-1-1);
    \draw[-] (-1-1)--(0-1);
    \draw[-] (0-1)--(1-1);
    \draw[-] (1-1)--(20);
\end{tikzpicture}
\end{center}

\columnbreak

\begin{center}
\begin{tikzpicture}
    \node[empty] (-11) at (-1,1) [label=above:$\widehat{Y}_t$] {};

    \node[bullet] (1-1) at (1,-1) [label=below:$-2$, label=above:$C$] {};
    \node[bullet] (10) at (1,0) [label=below:$-2$, label=above:$B$] {};
    \node[bullet] (11) at (1,1) [label=below:$-1$, label=above:$A$] {};
    \node[bullet] (20) at (2,0) [label=right:$2$] {};

    \node[bullet] (-10) at (-1,0) [label=below:$-1$, label=above:$E_4$] {};
    \node[bullet] (00) at (0,0) [label=below:$-1$, label=above:$E_5$] {};

    \draw[-] (11)--(20);
    \draw[-] (00)--(10)--(20);
    \draw[-] (1-1)--(20);
    \draw[-] (-10)--(11);
    \draw[-] (-10)--(1-1);
\end{tikzpicture}
\end{center}

\end{multicols}

Finally we apply again a divisorial contraction to $E_4$ and $E_5$. Then we have a new $(-1)$-curve $E_6$ in $\widehat{Y}_0$ which will be a new $(-1)$-curve (denoted again by $E_6$) in $\widehat{Y}_t$:

\begin{multicols}{2}

\begin{center}
\begin{tikzpicture}
    \node[bullet] (-30) at (-3,0) [label=left:$-3$, label=above:$F^+$]{};
    \node[bullet] (-20) at (-2,0) [label=below:$-1$, label=above:$E_6$] {};
    \node[xmark] (-21) at (-2,1) [] {};
    \node[xmark] (-2-1) at (-2,-1) [] {};
    \node[xmark] (-1-1) at (-1,-1) [] {};
    \node[xmark] (0-1) at (0,-1) [] {};
    \node[bullet] (1-1) at (1,-1) [label=below:$0$, label=above:$C$] {};
    \node[xmark] (-10) at (-1,0) [] {};
    \node[xmark] (00) at (0,0) [] {};
    \node[bullet] (10) at (1,0) [label=below:$-1$, label=above:$B$] {};
    \node[xmark] (01) at (0,1) [] {};
    \node[bullet] (11) at (1,1) [label=below:$0$, label=above:$A$] {};
    \node[bullet] (20) at (2,0) [label=right:2] {};

    \draw[-] (-30)--(-21);
    \draw[-] (-21)--(01);
    \draw[-] (01)--(11);
    \draw[-] (11)--(20);
    \draw[-] (-30)--(-20);
    \draw[-] (-20)--(-10);
    \draw[-] (-10)--(00);
    \draw[-] (00)--(10);
    \draw[-] (10)--(20);
    \draw[-] (20)--(20);
    \draw[-] (-30)--(-2-1);
    \draw[-] (-2-1)--(-1-1);
    \draw[-] (-1-1)--(0-1);
    \draw[-] (0-1)--(1-1);
    \draw[-] (1-1)--(20);
\end{tikzpicture}
\end{center}

\columnbreak

\begin{center}
\begin{tikzpicture}
    \node[empty] (-11) at (-1,1) [label=above:$\widehat{Y}_t$] {};

    \node[bullet] (1-1) at (1,-1) [label=below:$-1$, label=above:$C$] {};
    \node[bullet] (10) at (1,0) [label=below:$-1$, label=above:$B$] {};
    \node[bullet] (11) at (1,1) [label=below:$0$, label=above:$A$] {};
    \node[bullet] (20) at (2,0) [label=right:$2$] {};

    \node[bullet] (00) at (0,0) [label=below:$-1$, label=above:$E_6$] {};

    \draw[-] (11)--(20);
    \draw[-] (10)--(20);
    \draw[-] (1-1)--(20);
    \draw[-] (11) .. controls +(left:3cm) and +(left:3cm) .. (1-1);

    \draw[-] (00)--(10);
    \draw[-] (00)--(1-1);
\end{tikzpicture}
\end{center}

\end{multicols}

In a similar way, one can compute MMP $(-1)$-data for other $P$-resolutions of $X_0$. We summarize:

\begin{example}[Continued from Example~\ref{example:T_6(5-2)+1}]
\label{example:MMP-(-1)-data-T_6(5-2)+1}
The MMP $(-1)$-data for the singularity of $T_{6(5-2)+1}$-singularity are as follows:

\begin{center}
[1] \includegraphics[width=0.25\textwidth]{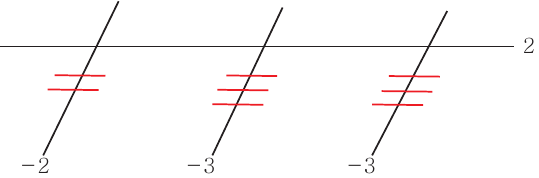} \qquad \qquad
[2] \includegraphics[width=0.25\textwidth]{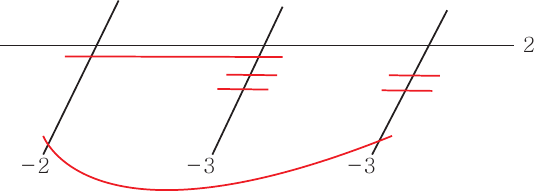}

[3] \includegraphics[width=0.25\textwidth]{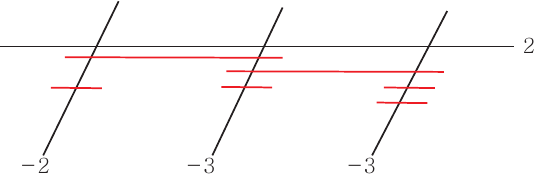} \qquad \qquad
[4] \includegraphics[width=0.25\textwidth]{T-1-5-4}
\end{center}
\end{example}

\subsubsection{$O_{12(3-2)+7}$-singularity}

Return to Example~\ref{example:O_12(3-2)+7}. Let $X_0$ be the $O_{12(3-2)+7}$-singularity and let $Y_0$ be the $P$-resolution $O_{12(3-2)+7}[3]$. In order to compute MMP $(-1)$-data for $Y_0$, we should use the corresponding $M$-resolution described in Example~\ref{example:M-resolution-O_12(3-2)+7}.
\begin{center}
\includegraphics[scale=0.75]{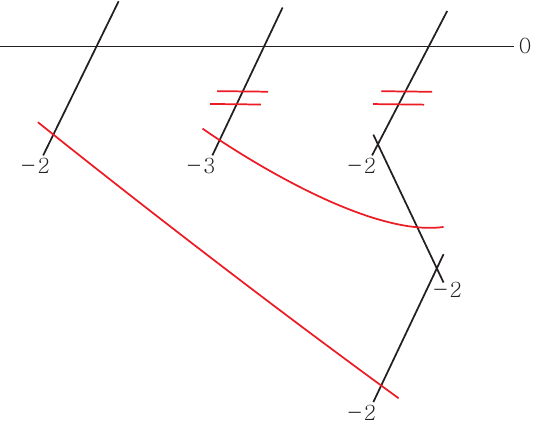}
\end{center}

The detailed procedures are as follows, where we draw the dual graphs of central fibers $\widehat{Y}_0$:

\begin{multicols}{2}
\begin{center}
\begin{tikzpicture}[scale=0.65]
    \node[empty] (-31) at (-3,1) [label=above:{Step 1}] {};

    \node[rectangle] (-30) at (-3,0) [label=left: \tiny $-3$]{};
    \node[bullet] (-20) at (-2,0) [label=below: \tiny $-2$] {};
    \node[rectangle] (-21) at (-2,1) [label=below: \tiny $-2$] {};
    \node[rectangle] (-2-1) at (-2,-1) [label=below: \tiny $-4$] {};
    \node[bullet] (0-1) at (0,-1) [label=below: \tiny $-1$] {};
    \node[bullet] (3-1) at (3,-1) [label=below: \tiny $-2$, label=above: \tiny $c_1$] {};
    \node[bullet] (-10) at (-1,0) [label=below: \tiny $-2$] {};
    \node[bullet] (00) at (0,0) [label=below: \tiny $-1$] {};
    \node[bullet] (40) at (4,0) [label=below: \tiny $0$] {};
    \node[bullet] (01) at (0,1) [label=below: \tiny $-1$] {};
    \node[bullet] (31) at (3,1) [label=below: \tiny $-2$, label=above: \tiny $a_1$] {};
    \node[bullet] (30) at (3,0) [label=below: \tiny $-3$,  label=above: \tiny $b_1$] {};
    \node[bullet] (1-1) at (1,-1) [label=below: \tiny $-2$, label=above: \tiny $c_3$] {};
    \node[bullet] (2-1) at (2,-1) [label=below: \tiny $-2$, label=above: \tiny $c_2$] {};

    \draw[-] (-30)--(-20);
    \draw[-] (-30)--(-21);
    \draw[-] (-30)--(-2-1);
    \draw[-] (-2-1)--(0-1);
    \draw[-] (0-1)--(1-1);
    \draw[-] (1-1)--(2-1);
    \draw[-] (2-1)--(3-1);
    \draw[-] (3-1)--(40);
    \draw[-] (-20)--(-10);
    \draw[-] (-10)--(00);
    \draw[-] (00)--(30);
    \draw[-] (30)--(40);
    \draw[-] (-21)--(01);
    \draw[-] (01)--(31);
    \draw[-] (31)--(40);
\end{tikzpicture}
\end{center}
\columnbreak
\begin{center}
\begin{tikzpicture}[scale=0.65]
    \node[empty] (-41) at (-4,1) [label=above:{Step 2}] {};

    \node[rectangle] (-40) at (-4,0) [label=left: \tiny $-5$]{};
    \node[rectangle] (-31) at (-3,1) [label=below: \tiny $-2$] {};
    \node[bullet] (-30) at (-3,0) [label=below: \tiny $-2$] {};
    \node[bullet] (-3-1) at (-3,-1) [label=below: \tiny $-1$] {};
    \node[bullet] (-20) at (-2,0) [label=below: \tiny $-2$] {};
    \node[rectangle] (-2-1) at (-2,-1) [label=below: \tiny $-2$] {};
    \node[rectangle] (-1-1) at (-1,-1) [label=below: \tiny $-5$] {};
    \node[bullet] (01) at (0,1) [label=below: \tiny $-1$, label=above: \tiny $F_1$] {};
    \node[bullet] (00) at (0,0) [label=below: \tiny $-1$] {};
    \node[bullet] (0-1) at (0,-1) [label=below: \tiny $-1$, label=above: \tiny $F_2$] {};
    \node[bullet] (1-1) at (1,-1) [label=below: \tiny $-2$, label=above: \tiny $c_3$] {};
    \node[bullet] (2-1) at (2,-1) [label=below: \tiny $-2$, label=above: \tiny $c_2$] {};
    \node[bullet] (31) at (3,1) [label=below: \tiny $-2$,  label=above: \tiny $a_1$] {};
    \node[bullet] (30) at (3,0) [label=below: \tiny $-3$,  label=above: \tiny $b_1$] {};
    \node[bullet] (3-1) at (3,-1) [label=below: \tiny $-2$, label=above: \tiny $c_1$] {};
    \node[bullet] (40) at (4,0) [label=below: \tiny $0$] {};

    \draw[-] (-40)--(-31);
    \draw[-] (-31)--(01);
    \draw[-] (01)--(31);
    \draw[-] (31)--(40);

    \draw[-] (-40)--(-30);
    \draw[-] (-30)--(-20);
    \draw[-] (-20)--(00);
    \draw[-] (00)--(30);
    \draw[-] (30)--(40);

    \draw[-] (-40)--(-3-1);
    \draw[-] (-3-1)--(-2-1);
    \draw[-] (-2-1)--(-1-1);
    \draw[-] (-1-1)--(0-1);
    \draw[-] (0-1)--(1-1);
    \draw[-] (1-1)--(2-1);
    \draw[-] (2-1)--(3-1);
    \draw[-] (3-1)--(40);
\end{tikzpicture}
\end{center}
\end{multicols}

\begin{multicols}{2}

\begin{center}
\begin{tikzpicture}[scale=0.65]
    \node[empty] (-41) at (-4,1) [label=above:{Step 3}] {};

    \node[bullet] (-40) at (-4,0) [label=left: \tiny $-4$, label=above: \tiny $F_1^+$]{};
    \node[xmark] (-31) at (-3,1) [] {};
    \node[bullet] (-30) at (-3,0) [label=below: \tiny $-2$] {};
    \node[bullet] (-3-1) at (-3,-1) [label=below: \tiny $-1$] {};
    \node[bullet] (-20) at (-2,0) [label=below: \tiny $-2$] {};
    \node[bullet] (-2-1) at (-2,-1) [label=below: \tiny $-2$, label=above: \tiny $F_2^+$] {};
    \node[rectangle] (-1-1) at (-1,-1) [label=below: \tiny $-4$] {};
    \node[xmark] (01) at (0,1) [] {};
    \node[bullet] (00) at (0,0) [label=below: \tiny $-1$] {};
    \node[xmark] (0-1) at (0,-1) [] {};
    \node[bullet] (1-1) at (1,-1) [label=below: \tiny $-1$, label=above: \tiny $F_3$] {};
    \node[bullet] (2-1) at (2,-1) [label=below: \tiny $-2$, label=above: \tiny $c_2$] {};
    \node[bullet] (31) at (3,1) [label=below: \tiny $0$, label=above: \tiny $a_1$] {};
    \node[bullet] (30) at (3,0) [label=below: \tiny $-3$, label=above: \tiny $b_1$] {};
    \node[bullet] (3-1) at (3,-1) [label=below: \tiny $-2$, label=above: \tiny $c_1$] {};
    \node[bullet] (40) at (4,0) [label=below: \tiny $0$] {};

    \draw[-] (-40)--(-31);
    \draw[-] (-31)--(01);
    \draw[-] (01)--(31);
    \draw[-] (31)--(40);

    \draw[-] (-40)--(-30);
    \draw[-] (-30)--(-20);
    \draw[-] (-20)--(00);
    \draw[-] (00)--(30);
    \draw[-] (30)--(40);

    \draw[-] (-40)--(-3-1);
    \draw[-] (-3-1)--(-2-1);
    \draw[-] (-2-1)--(-1-1);
    \draw[-] (-1-1)--(0-1);
    \draw[-] (0-1)--(1-1);
    \draw[-] (1-1)--(2-1);
    \draw[-] (2-1)--(3-1);
    \draw[-] (3-1)--(40);
\end{tikzpicture}
\end{center}

\columnbreak

\begin{center}
\begin{tikzpicture}[scale=0.65]
    \node[empty] (-41) at (-4,1) [label=above:{Step 4}] {};

    \node[bullet] (-40) at (-4,0) [label=left:\tiny $-4$, label=above: \tiny $F_1^+$]{};
    \node[xmark] (-31) at (-3,1) [] {};
    \node[bullet] (-30) at (-3,0) [label=below: \tiny $-2$] {};
    \node[bullet] (-3-1) at (-3,-1) [label=below: \tiny $-1$, label=above: \tiny $E_2$] {};
    \node[bullet] (-20) at (-2,0) [label=below: \tiny $-2$] {};
    \node[bullet] (-2-1) at (-2,-1) [label=below: \tiny $-2$, label=above: \tiny $F_2^+$] {};
    \node[bullet] (-1-1) at (-1,-1) [label=below: \tiny $-3$, label=above: \tiny $F_3^+$] {};
    \node[xmark] (01) at (0,1) [] {};
    \node[bullet] (00) at (0,0) [label=below: \tiny $-1$, label=above: \tiny $E_1$] {};
    \node[xmark] (0-1) at (0,-1) [] {};
    \node[xmark] (1-1) at (1,-1) [] {};
    \node[bullet] (2-1) at (2,-1) [label=below: \tiny $-1$, label=above left: \tiny $E_3$, label=above right: \tiny $c_2$] {};
    \node[bullet] (31) at (3,1) [label=below: \tiny $0$, label=above: \tiny $a_1$] {};
    \node[bullet] (30) at (3,0) [label=below: \tiny $-3$, label=above: \tiny $b_1$] {};
    \node[bullet] (3-1) at (3,-1) [label=below: \tiny $-2$, label=above: \tiny $c_1$] {};
    \node[bullet] (40) at (4,0) [label=below: \tiny $1$] {};

    \draw[-] (-40)--(-31);
    \draw[-] (-31)--(01);
    \draw[-] (01)--(31);
    \draw[-] (31)--(40);

    \draw[-] (-40)--(-30);
    \draw[-] (-30)--(-20);
    \draw[-] (-20)--(00);
    \draw[-] (00)--(30);
    \draw[-] (30)--(40);

    \draw[-] (-40)--(-3-1);
    \draw[-] (-3-1)--(-2-1);
    \draw[-] (-2-1)--(-1-1);
    \draw[-] (-1-1)--(0-1);
    \draw[-] (0-1)--(1-1);
    \draw[-] (1-1)--(2-1);
    \draw[-] (2-1)--(3-1);
    \draw[-] (3-1)--(40);
\end{tikzpicture}
\end{center}
\end{multicols}

\begin{multicols}{2}
\begin{center}
\begin{tikzpicture}[scale=0.65]
    \node[empty] (-41) at (-4,1) [label=above:{Step 5}] {};

    \node[bullet] (-40) at (-4,0) [label=left: \tiny $-2$, label=above: \tiny $F_1^+$]{};
    \node[xmark] (-31) at (-3,1) [] {};
    \node[bullet] (-30) at (-3,0) [label=below: \tiny $-2$] {};
    \node[xmark] (-3-1) at (-3,-1) [] {};
    \node[bullet] (-20) at (-2,0) [label=below: \tiny $-1$, label=above: \tiny $E_4$] {};
    \node[bullet] (-2-1) at (-2,-1) [label=below: \tiny $-1$,label=above: \tiny $F_2^+$] {};
    \node[bullet] (-1-1) at (-1,-1) [label=below: \tiny $-2$, label=above: \tiny $F_3^+$] {};
    \node[xmark] (01) at (0,1) [] {};
    \node[xmark] (00) at (0,0) [] {};
    \node[xmark] (0-1) at (0,-1) [] {};
    \node[xmark] (1-1) at (1,-1) [] {};
    \node[xmark] (2-1) at (2,-1) [] {};
    \node[xmark] (31) at (3,1) [] {};
    \node[bullet] (30) at (3,0) [label=below: \tiny $-2$, label=above: \tiny $b_1$] {};
    \node[bullet] (3-1) at (3,-1) [label=below: \tiny $-1$, label=above: \tiny $c_1$] {};
    \node[bullet] (40) at (4,0) [label=below: \tiny $1$] {};

    \draw[-] (-40)--(-31);
    \draw[-] (-31)--(01);
    \draw[-] (01)--(31);
    \draw[-] (31)--(40);

    \draw[-] (-40)--(-30);
    \draw[-] (-30)--(-20);
    \draw[-] (-20)--(00);
    \draw[-] (00)--(30);
    \draw[-] (30)--(40);

    \draw[-] (-40)--(-3-1);
    \draw[-] (-3-1)--(-2-1);
    \draw[-] (-2-1)--(-1-1);
    \draw[-] (-1-1)--(0-1);
    \draw[-] (0-1)--(1-1);
    \draw[-] (1-1)--(2-1);
    \draw[-] (2-1)--(3-1);
    \draw[-] (3-1)--(40);
\end{tikzpicture}
\end{center}

\columnbreak

\begin{center}
\begin{tikzpicture}[scale=0.65]
    \node[empty] (-41) at (-4,1) [label=above:{Step 6}] {};

    \node[bullet] (-40) at (-4,0) [label=left: \tiny $-2$, label=above: \tiny $F_1^+$]{};
    \node[xmark] (-31) at (-3,1) [] {};
    \node[bullet] (-30) at (-3,0) [label=below: \tiny $-1$, label=above: \tiny $E_5$] {};
    \node[xmark] (-3-1) at (-3,-1) [] {};
    \node[xmark] (-20) at (-2,0) [] {};
    \node[bullet] (-2-1) at (-2,-1) [label=below: \tiny $-1$,label=above: \tiny $F_2^+$] {};
    \node[bullet] (-1-1) at (-1,-1) [label=below: \tiny $-2$, label=above: \tiny $F_3^+$] {};
    \node[xmark] (01) at (0,1) [] {};
    \node[xmark] (00) at (0,0) [] {};
    \node[xmark] (0-1) at (0,-1) [] {};
    \node[xmark] (1-1) at (1,-1) [] {};
    \node[xmark] (2-1) at (2,-1) [] {};
    \node[xmark] (31) at (3,1) [] {};
    \node[bullet] (30) at (3,0) [label=below: \tiny $-1$, label=above: \tiny $b_1$] {};
    \node[bullet] (3-1) at (3,-1) [label=below: \tiny $-1$, label=above: \tiny $c_1$] {};
    \node[bullet] (40) at (4,0) [label=below: \tiny $1$] {};

    \draw[-] (-40)--(-31);
    \draw[-] (-31)--(01);
    \draw[-] (01)--(31);
    \draw[-] (31)--(40);

    \draw[-] (-40)--(-30);
    \draw[-] (-30)--(-20);
    \draw[-] (-20)--(00);
    \draw[-] (00)--(30);
    \draw[-] (30)--(40);

    \draw[-] (-40)--(-3-1);
    \draw[-] (-3-1)--(-2-1);
    \draw[-] (-2-1)--(-1-1);
    \draw[-] (-1-1)--(0-1);
    \draw[-] (0-1)--(1-1);
    \draw[-] (1-1)--(2-1);
    \draw[-] (2-1)--(3-1);
    \draw[-] (3-1)--(40);
\end{tikzpicture}
\end{center}
\end{multicols}

\subsubsection{$I_{30(4-2)+17}$-singularity}

Let $X_0$ be the $I_{30(4-2)+17}$-singularity in Example~\ref{example:I_30(4-2)+17}.

\begin{example}[Continued from Example~\ref{example:I_30(4-2)+17}]
\label{example:MMP-(-1)-data-I_30(4-2)+17}

The MMP $(-1)$-data for $X_0$ are as follows:

\begin{center}
[1] \includegraphics[width=0.3\textwidth]{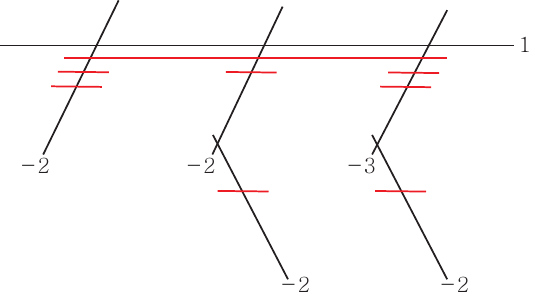} \quad
[2] \includegraphics[width=0.3\textwidth]{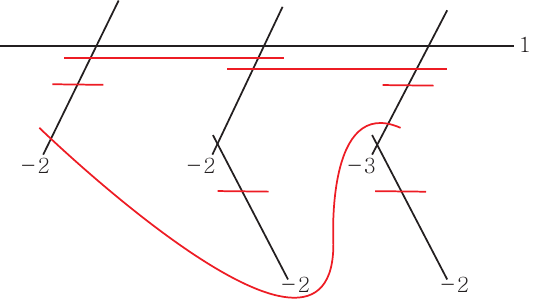}

[3] \includegraphics[width=0.3\textwidth]{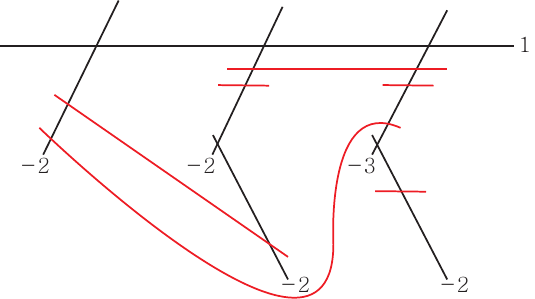} \quad
[4] \includegraphics[width=0.3\textwidth]{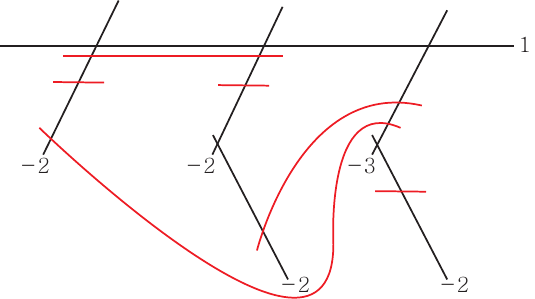}

[5] \includegraphics[width=0.3\textwidth]{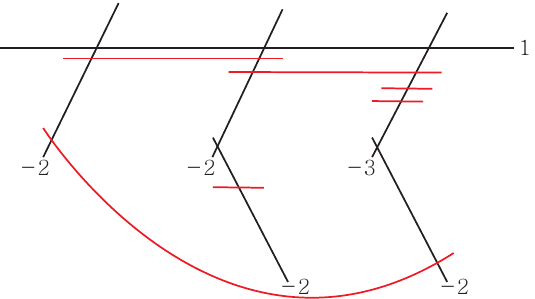} \quad
[6] \includegraphics[width=0.3\textwidth]{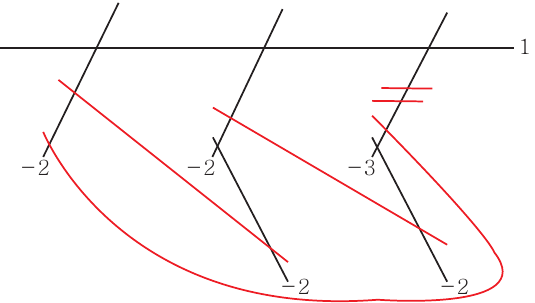}

[7] \includegraphics[width=0.3\textwidth]{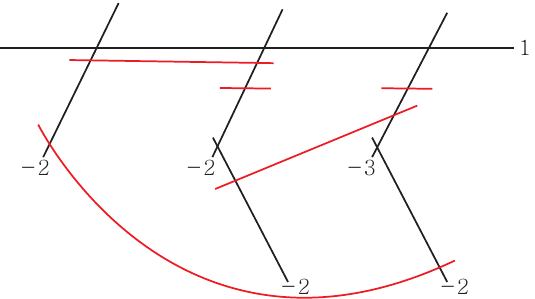}
\end{center}
\end{example}

\section{Milnor fibers and Symplectic Fillings}
\label{section:Milnor-vs-filling}

In this section we explain how to match a given $P$-resolution of a tetrahedral/octahedral/icosahedral singularity to a minimal symplectic fillings in the list of Bhupal-Ono~\cite{Bhupal-Ono-2012}.

In the previous section, Milnor fibers (hence, $P$-resolutions) of non-cyclic quotient surface singularities are identified as complements of compactifying divisors embedded in rational surfaces. In a similar way, Bhupal-Ono~\cite{Bhupal-Ono-2012} classifies minimal symplectic fillings of the links of non-cyclic quotient surface singularities up to symplectic deformation equivalence:

\begin{theorem}[{Bhupal-Ono~\cite[Theorem~1.1]{Bhupal-Ono-2012}}]
\label{theorem:rational}
A symplectic filling of the link of a quotient surface singularity is symplectic deformation equivalent to the complement of a certain divisor in an iterated blow-up of  $\mathbb{CP}^2$ or $\mathbb{CP}^1 \times \mathbb{CP}^1$.
\end{theorem}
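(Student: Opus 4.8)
The plan is to adapt the strategy that McDuff and Lisca used for lens spaces to the Seifert-fibered links of quotient surface singularities, using Pinkham's natural compactification \cite{Pinkham-1977} to supply a symplectic cap. Fix a quotient surface singularity $X_0$ with link $L$ and its canonical (Milnor fillable) contact structure $\xi_0$, and let $(W,\omega)$ be a strong symplectic filling of $(L,\xi_0)$; passing to a minimal model we may assume $W$ carries no symplectic $(-1)$-sphere. The first step is to build a symplectic cap from the compactifying divisor: by Pinkham \cite{Pinkham-1977} the singular natural compactification $\overline{X}_0=\Proj(A[t])$ has a smooth model $\widetilde{X}_0$ in which the curves lying over $\overline{E}_{\infty}$ form the normal-crossing configuration of Figure~\ref{figure:non-cyclic-widetilde(X)}, and this configuration supports an ample divisor. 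One then checks that a regular neighborhood $N$ of this configuration is a concave symplectic filling of $(-L,\xi_0)$: the positivity of the divisor provides the numerical condition needed to put a concave symplectic structure on such a neighborhood, and a plumbing computation identifies the induced boundary contact structure with $\xi_0$ (with reversed orientation). Gluing yields a closed symplectic $4$-manifold $Z=W\cup_L N$ containing $N$ as a symplectic configuration of spheres.

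The second step is to show that $Z$ is a rational surface. Since $\widehat{E}_{\infty}$ is big and nef in the smooth model --- equivalently, every $(-1)$-curve meets it --- the configuration $N$ contains, possibly after a finite sequence of symplectic blow-ups performed inside $N$, a symplectic sphere of non-negative self-intersection. By McDuff's theorem that a closed symplectic $4$-manifold containing such a sphere is rational or ruled, $Z$ is then rational or ruled. To exclude the irrational-ruled case I would use that $L$ is a rational homology $3$-sphere: $N$ is simply connected, so Seifert--van Kampen and Mayer--Vietoris give $\pi_1(Z)\cong\pi_1(W)$ and $b_1(Z)=b_1(W)$, and a minimal symplectic filling of a rational homology sphere has vanishing first Betti number; hence $Z$ is rational.

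The third step is to pass to a minimal model. Running the symplectic minimal model program on $Z$, each contracted $(-1)$-sphere that meets $N$ undoes one of the blow-ups introduced in the first step, so after finitely many contractions (together with at most one elementary transformation to move a Hirzebruch surface $\mathbb{F}_n$, $n\ge 2$, to $\mathbb{F}_0$ or $\mathbb{F}_1$) one arrives at $\mathbb{CP}^2$ or $\mathbb{CP}^1\times\mathbb{CP}^1$. Reversing the process realizes $Z$ as an iterated blow-up of $\mathbb{CP}^2$ or $\mathbb{CP}^1\times\mathbb{CP}^1$, and then $W=Z\setminus N$ is the complement in it of the transform of the compactifying configuration. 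As every step was carried out up to symplectic deformation, this proves the statement.

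The main obstacle is the interface between the first two steps: one must verify that $N$ induces the \emph{canonical} contact structure on its boundary, not merely the correct underlying contact $3$-manifold, and that the non-negative-square symplectic sphere needed to invoke McDuff's theorem can be produced without destroying the identification $W=Z\setminus N$. The accompanying bookkeeping --- tracking the blow-ups in the first step and the blow-downs in the third so that the resulting divisor in $\mathbb{CP}^2$ or $\mathbb{CP}^1\times\mathbb{CP}^1$ depends only on $X_0$ --- is the other delicate point, and is precisely where the explicit configurations of Figures~\ref{figure:dual-graph-of-compactifying-divisor} and \ref{figure:non-cyclic-widetilde(X)} are used.
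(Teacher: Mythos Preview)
The paper does not prove this theorem; it is stated as a citation of Bhupal--Ono~\cite[Theorem~1.1]{Bhupal-Ono-2012}, and the paper only records a two-sentence summary of their strategy: glue a regular neighborhood $\nu(\widehat{E}_{\infty})$ of the compactifying divisor to the filling $W$ along the link to obtain a closed symplectic $4$-manifold $\widehat{W}$, and then show $\widehat{W}$ is rational. Your proposal is a reasonable fleshing-out of exactly this strategy (cap, McDuff's theorem, blow down to a minimal model), so in spirit it matches what the paper sketches.

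Two small corrections. First, the cap is the neighborhood of the compactifying divisor $\widehat{E}_{\infty}$ (Figure~\ref{figure:dual-graph-of-compactifying-divisor}), not of the full configuration of Figure~\ref{figure:non-cyclic-widetilde(X)}; the latter contains the exceptional curves of the minimal resolution as well, and its regular neighborhood is not a cap for $L$. Second, you do not need to blow up inside $N$ to find a non-negative sphere: the central curve of $\widehat{E}_{\infty}$ already has self-intersection $b-3$, and in the remaining small cases one works directly with the explicit configuration. These are cosmetic, but since you flagged the ``interface'' as the delicate point, it is worth getting the cap right.
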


The strategy of Bhupal-Ono~\cite{Bhupal-Ono-2012} is similar to the method in the previous section. At first one compactifies a minimal symplectic filling $W$ of a quotient surface singularity $X_0$ as a symplectic $4$-manifold $\widehat{W}$ by gluing a regular neighborhood $\nu(\widehat{E}_{\infty})$ of the compactifying divisor along the boundary, that is,
\begin{equation*}
\widehat{W} = W \cup_{\partial W} \nu(\widehat{E}_{\infty}).
\end{equation*}
Bhupal-Ono~\cite{Bhupal-Ono-2012} then shows that $\widehat{W}$ is a rational $4$-manifold. Therefore Bhupal-Ono~\cite{Bhupal-Ono-2012} shows that any minimal symplectic fillings of non-cyclic quotient surface singularities are complements of the compactifying divisors in rational $4$-manifolds as Milnor fibers are.

\subsection{Classification of minimal symplectic fillings}
\label{subsection:classification-symplectic-filling}

However classifications and presentations in Bhupal-Ono~\cite{Bhupal-Ono-2012} of all possible embeddings of compactifying divisors in rational $4$-manifolds are different from that of Milnor fibers in Section~\ref{section:MMP}. In order to deal with the problem of symplectic deformation equivalence, Bhupal-Ono~\cite{Bhupal-Ono-2012} transforms the embedded compactifying divisors in $\widehat{W}$ into certain configurations containing cuspidal curves by applying sequences of blow-ups and blow-downs as in Figures~\ref{figure:sequence-TOI-32}, \ref{figure:sequence-TOI-31}.

We briefly explain the sequences of blow-ups and blow-downs in Figures~\ref{figure:sequence-TOI-32}, \ref{figure:sequence-TOI-31}.  For details, refer Bhupal--Ono~\cite{Bhupal-Ono-2012}. At first we blow up successively (if necessary) the intersection point of the central curve of $\widehat{E}_{\infty}$ and the third branch until the central curve has became to ($-1$)-curve. Then we blow down (or blow up) as described in Figure~\ref{figure:sequence-TOI-32} and \ref{figure:sequence-TOI-31} so that we get a rational 4-manifold $Z_2$ with a cuspidal curve $C$ with $C \cdot C > 0$ and a linear chain of 2-spheres $C_1, \dotsc, C_k$ (plus some extra 2-spheres intersecting $C$ at the cusp). Let $\widetilde{E}_{\infty} \subset Z_2$ be the proper transform of $\widehat{E}_{\infty} \subset Z$. Since the blow-ups and blow-downs occur only on $E_{\infty}$ and its proper transforms, we have
\begin{equation*}
W \cong Z - \nu(\widehat{E}_{\infty}) \cong Z_2 - \nu(\widetilde{E}_{\infty}).
\end{equation*}

Since $W$ is minimal, every $(-1)$-curve in $Z_2$ should intersect $\widetilde{E}_{\infty}$. Let $Z_1$ be the rational 4-manifold obtained by contracting $(-1)$-curves not intersecting $C$. Then Bhupal-Ono~\cite{Bhupal-Ono-2012} shows that $Z_1$ can be obtained by a sequence of blowing-ups at $p$ (including infinitely near points over $p$) in a cuspidal curve $C$ in $\mathbb{CP}^2$ or $\mathbb{CP}^1 \times \mathbb{CP}^1$ and (if necessary) some points on $C$ as shown in  Figure~\ref{figure:sequence-TOI-32} and \ref{figure:sequence-TOI-31}.

\begin{figure}[tp]
\centering
\includegraphics{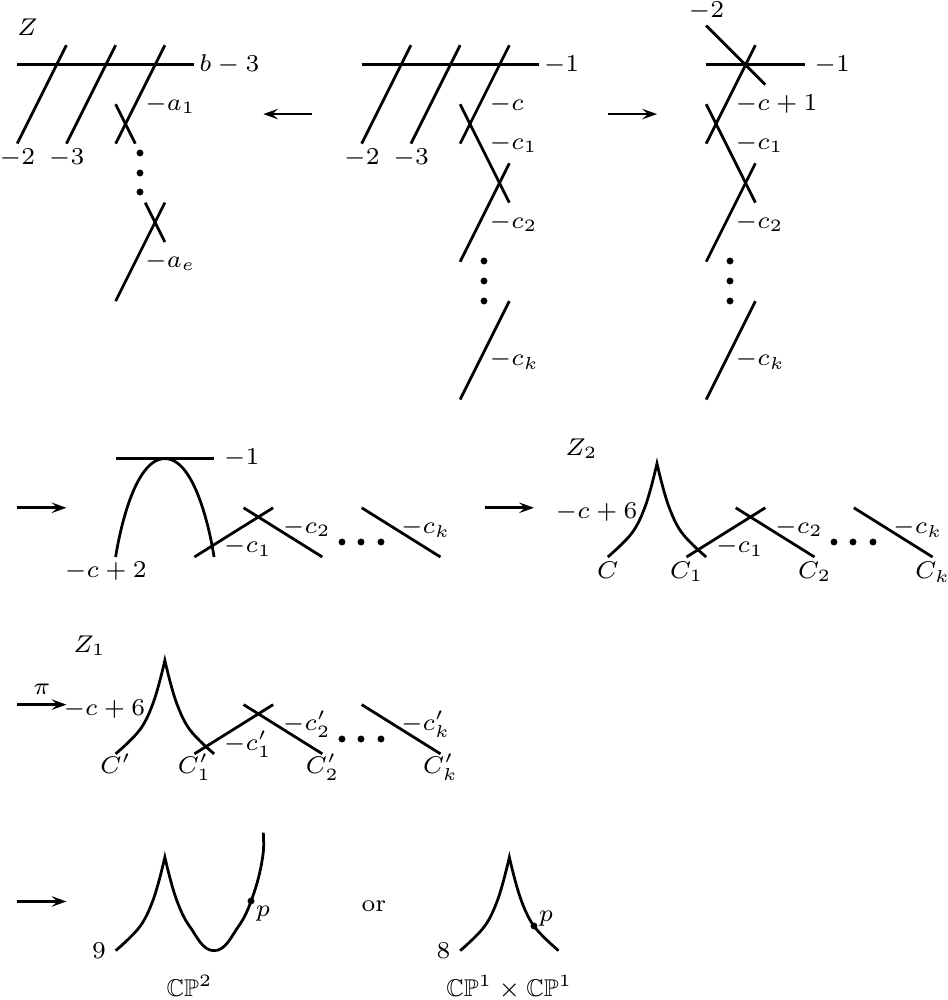}
\caption{For tetrahedral, octahedral, or icosahedral singularities of type $(3,2)$; Bhupal--Ono~\cite[Figure~3]{Bhupal-Ono-2012}, PPSU~\cite[Figure~6]{PPSU-2015}}
\label{figure:sequence-TOI-32}
\end{figure}

\begin{figure}[tp]
\centering
\includegraphics{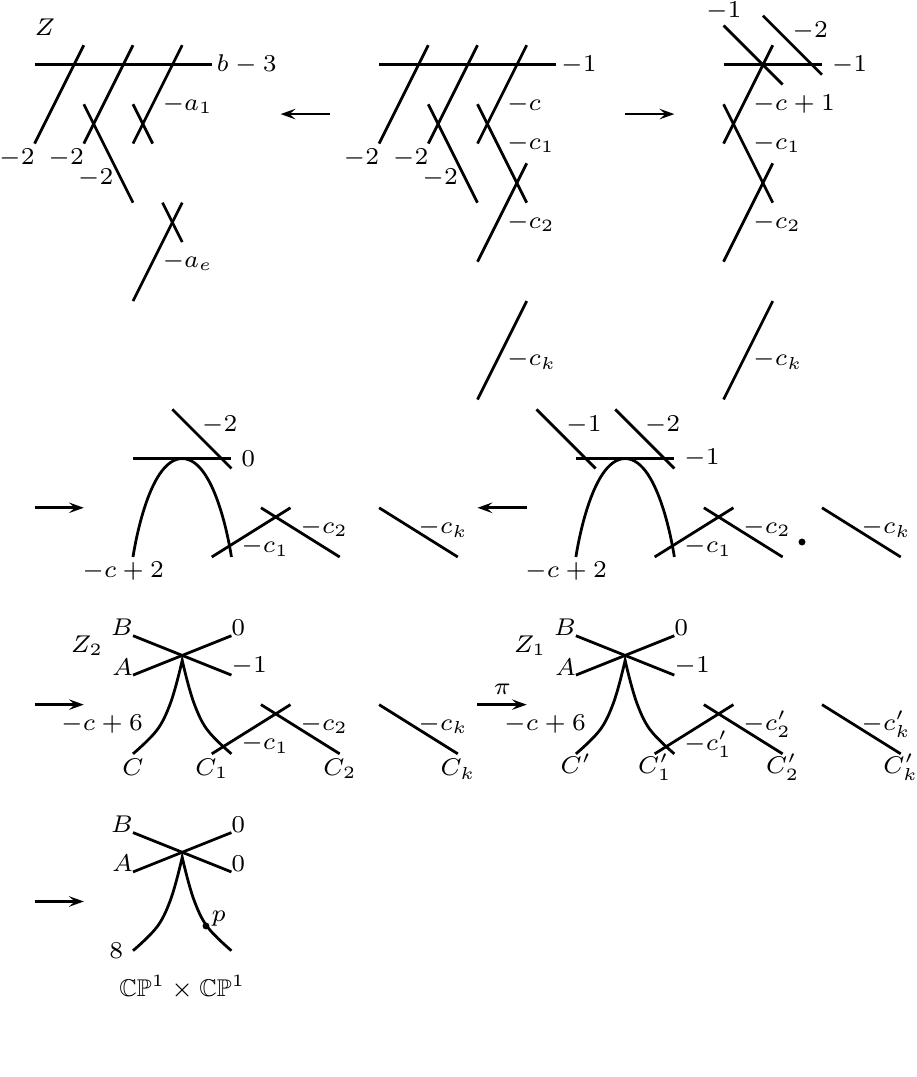}
\caption{For tetrahedral, octahedral, or icosahedral singularities of type $(3,1)$; Bhupal--Ono~\cite[Figure~5, 6, 10]{Bhupal-Ono-2012}, PPSU~\cite[Figure~7]{PPSU-2015}}
\label{figure:sequence-TOI-31}
\end{figure}

Furthermore Bhupal-Ono~\cite[Section~5]{Bhupal-Ono-2012} classifies all possible ways to obtain the rational $4$-manifold $Z_2$ from $\mathbb{CP}^2$ or $\mathbb{CP}^1 \times \mathbb{CP}^1$ by presenting all possible configurations of $(-1)$-curves intersecting only one of $C_1, \dotsc, C_k$.

\begin{definition}
We call the configurations of $(-1)$-curves in the list of Bhupal-Ono~\cite[Section~5]{Bhupal-Ono-2012} as the \emph{BO $(-1)$-data} of the corresponding minimal symplectic fillings.
\end{definition}

We give two examples of BO $(-1)$-data which will be used in the next subsections.

\begin{example}[cf.~Example~\ref{example:T_6(5-2)+1}]
\label{example:BO-(-1)-data-T_6(5-2)+1}
Let $X_0$ be the tetrahedral singularity $T_{6(5-2)+1}$, which is of type $(3,1)$. Its BO $(-1)$-data are given as follows:

\begin{enumerate}
\item[\#5.] $(T_{6(5-2)+1}; 5,-2,-2,-4; 3 \times C_3), \mathbb{CP}^2$,
\item[\#6.] $(T_{6(5-2)+1}; 5,-2,-2,-4; 1 \times C_1, 2 \times C_3), \mathbb{CP}^2$
\item[\#7.] $(T_{6(5-2)+1}; 5,-2,-2,-4; 1 \times C_1, 2 \times C_3), \mathbb{CP}^1 \times \mathbb{CP}^1$,
\item[\#8.] $(T_{6(5-2)+1}; 5,-2,-2,-4; 1 \times C_2, 1 \times C_3),\mathbb{CP}^2$,
\end{enumerate}
\end{example}

\begin{remark}[Notation for type $(3,2)$]
Here we use slightly different notation $(TOI_m; C \cdot C, -c_1, \dotsc ,-c_k; a_1 \times C_{i_1}, \dotsc , a_l \times X_{i_l})$ from that of Bhupal-Ono~\cite{Bhupal-Ono-2012} for clarity in order to denote the symplectic filling of the link of $T_m$, $O_m$, or $I_m$ of type $(3,2)$ given as the complement of a regular neighborhood of the compactifying divisor $K = C \cup  C_1 \cup \dotsb \cup C_k$ in $Z_2$ in Figure~\ref{figure:sequence-TOI-32}. The number $\# nn$ is the number of the model in Bhupal--Ono~\cite{Bhupal-Ono-2012}, $a_i \times C_i$ means that there are $a_i$ distinct $(-1)$-curves intersecting only $C_i$ in $Z_2$.
\end{remark}

\begin{example}[cf.~Example~\ref{example:I_30(4-2)+17}]
Let $X_0$ be the icosahedral singularity $I_{30(4-2)+17}$, which is of type $(3,1)$. The BO $(-1)$-data are divided into two cases:

Case I

\begin{enumerate}
\item[\#179.] $(I_{30(4-2)+17}; 5,-2,-4,-2; 2 \times C_2, 1 \times C_3)$
\item[\#180.] $(I_{30(4-2)+17}; 5,-2,-4,-2; 1 \times C_1, 2 \times C_2)$
\item[\#181.] $(I_{30(4-2)+17}; 5,-2,-4,-2; 1 \times C_1, 1 \times C_2, 1 \times C_3)$
\end{enumerate}

Case II

\begin{enumerate}
\item[\#238.] $(I_{30(4-2)+17}; 5,-2,-4,-2; 1, 2; 1 \times C_2, 1 \times C_3)$
\item[\#239.] $(I_{30(4-2)+17}; 5,-2,-4,-2; 1, 3; 2 \times C_2)$
\item[\#240.] $(I_{30(4-2)+17}; 5,-2,-4,-2; 2, 2; 1 \times C_1, 1 \times C_3)$
\item[\#241.] $(I_{30(4-2)+17}; 5,-2,-4,-2; 2, 3; 1 \times C_1, 1 \times C_2)$
\end{enumerate}
\end{example}

\begin{remark}[Notation for type $(3,1)$]
\label{remark:CaseI-vs-CaseII}
Let us briefly recall how Bhupal-Ono~\cite{Bhupal-Ono-2012} divides the BO $(-1)$-data for TOI-singularities of type $(3,1)$ into Case I and Case II. According to Bhupal-Ono~\cite[Lemma~4.6]{Bhupal-Ono-2012} there exists a unique $(-1)$-curve $E$ in $Z_2 \setminus (C \cup A)$ (cf.~Figure~\ref{figure:sequence-TOI-31}) such that $E \cdot B = 1$; furthermore $E$ can intersect at most one of the $C_i$ and $E \cdot C_i = 1$ if $E$ intersects $C_i$. So there are two cases:
\begin{itemize}
\item Case I: $E \cdot C_i=0$ for all $i$,

\item Case II: $E \cdot C_i = 1$ for some $i$.
\end{itemize}
In Case II, Bhupal-Ono~\cite[Lemma~4.9]{Bhupal-Ono-2012} also shows that there exists a $(-1)$-curve $F$ intersecting $C$ and some $C_j$ in $Z_2$. So we use the similar notation $(TOI_m; C \cdot C, -c_1, \dotsc ,-c_k; i, j; a_1 \times i_1, \dotsc , a_l \times i_l)$ to denote the Case II, where the numbers $i$ and $j$ denote the existence of $(-1)$-curves intersecting $B$ and $C_i$ and $C$ and $C_j$, respectively.
\end{remark}

\subsection{The correspondence between Milnor fibers and minimal symplectic fillings}
\label{subsection:Milnor-vs-filling}

In the previous section, we show that every Milnor fiber $M$ of a quotient surface singularity $X_0$ can be realized as complement of the compactifying divisor $\widehat{E}_{\infty}$ embedded in a rational surface $V$. And the embedding of $\widehat{E}_{\infty}$ in $V$ is completely determined by the $(-1)$-curves intersecting $\widehat{E}_{\infty}$. Therefore, after applying the sequence of blowing-ups and blowing-downs described in Figures~\ref{figure:sequence-TOI-32}, \ref{figure:sequence-TOI-31}, one may identify a given Milnor fiber to a minimal symplectic filling in the list of Bhupal-Ono~\cite[\S5]{Bhupal-Ono-2012}.

\begin{example}[Continued from Example~\ref{example:MMP-(-1)-data-T_6(5-2)+1}]

Let $X_0$ be the $T_{6(5-2)+1}$-singularity and let $Y_0$ be the $P$-resolution $T_{6(5-2)+1}[4]$. We apply the sequence of blowing-ups and blowing-downs in Figures~\ref{figure:sequence-TOI-32} to the MMP $(-1)$-data for $Y_0$ and we track down the $(-1)$-curves of the MMP $(-1)$-data. Then we have the following picture:
\\
\centering
\includegraphics[width=0.3\textwidth]{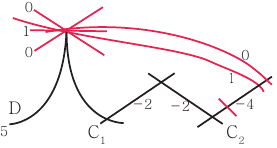}
\end{example}

As we see from the above example, one doesn't obtain, in general, a complete BO $(-1)$-data as given in Bhupal-Ono~\cite[\S5]{Bhupal-Ono-2012} only by the sequence of blow-ups and blow-downs to a MMP $(-1)$-data because some $C_i$'s in $Z_2$ are not appeared in the compactifying divisor.

\begin{definition}
The \emph{partial $(-1)$-data} corresponding to a given MMP $(-1)$-data is a part of BO $(-1)$-data obtained from the MMP $(-1)$-data by applying the sequence of blowing-ups and blowing-downs described in Figures~\ref{figure:sequence-TOI-32}, \ref{figure:sequence-TOI-31}.
\end{definition}

\begin{remark}[Notation for partial $(-1)$-data]
We use the notation $(TOI_m[n]; a_1 \times C_{j_1}, \dotsc , a_i \times C_{j_l})$ for denoting a partial $(-1)$-data for a $P$-resolution of number $n$ of $T_m$, $O_m$, or $I_m$ singularity, where $C_{j_l}$'s are the proper transforms of the curves that appear in the compactifying divisor after the sequence of blowing-ups and blowing-downs and $a_l$ implies that there are $a_i$ $(-1)$-curves.
\end{remark}

\begin{example}[Continued from Example~\ref{example:MMP-(-1)-data-T_6(5-2)+1}]

The partial $(-1)$-data for the tetrahedral singularity $T_{6(5-2)+1}$ are as follows:

\begin{multicols}{2}
\begin{itemize}
\item $(T_{6(5-2)+1}[1]; 3 \times C_3)$
\item $(T_{6(5-2)+1}[3]; 2 \times C_3)$
\end{itemize}

\columnbreak

\begin{itemize}
\item $(T_{6(5-2)+1}[2]; 2 \times C_3)$
\item $(T_{6(5-2)+1}[4]; 1 \times C_3)$
\end{itemize}
\end{multicols}

\end{example}

\begin{example}[Continued from Example~\ref{example:MMP-(-1)-data-I_30(4-2)+17}]

The following are the partial $(-1)$-data for the icosahedral singularity $I_{30(4-2)+17}$:

\begin{multicols}{2}
\begin{itemize}
\item $(I_{30(4-2)+17}[1]; 2 \times C_2, 1 \times C_3)$
\item $(I_{30(4-2)+17}[3]; 1 \times C_2, 1 \times C_3)$
\item $(I_{30(4-2)+17}[5]; 2 \times C_2)$
\item $(I_{30(4-2)+17}[7]; 1 \times C_2)$
\end{itemize}

\columnbreak

\begin{itemize}
\item $(I_{30(4-2)+17}[2]; 1 \times C_2, 1 \times C_3)$
\item $(I_{30(4-2)+17}[4]; 1 \times C_3)$
\item $(I_{30(4-2)+17}[6]; 2 \times C_2)$
\end{itemize}
\end{multicols}

\end{example}

In most cases, one can identify a given Milnor fiber with a minimal symplectic fillings by using the partial $(-1)$-data obtained from MMP. For example, the partial $(-1)$-data for $Y_0=T_{6(5-2)+1}[4]$ has only one $(-1)$-curve intersecting $C_3$. Then, by comparing with the BO $(-1)$-data for $X_0$ in Example~\ref{example:BO-(-1)-data-T_6(5-2)+1}, we can conclude that the $P$-resolution $Y_0$ corresponds to the minimal symplectic filling of number \#8.

However there are some cases that we cannot identify given $P$-resolutions with BO $(-1)$-data only by the partial $(-1)$-data of the $P$-resolutions because there exist two $P$-resolutions with the same partial $(-1)$-data. For example, the below are such pairs:
\[(T_{6(5-2)+1}[2], T_{6(5-2)+1}[3]), (I_{30(4-2)+17}[2],I_{30(4-2)+17}[2])\]

We divide pairs of $P$-resolutions with the same $(-1)$-data into two cases:

\begin{enumerate}[{Case} A.]
\item There are 16 pairs of entries in the list of Bhupal-Ono~\cite[\S5]{Bhupal-Ono-2012} such that two entries in each pair correspond to the same singularity with identical BO $(-1)$-data. Bhupal-Ono~\cite{Bhupal-Ono-2012} shows that, in each pairs, one entry has a BO $(-1)$-data of a minimal symplectic filling obtained by a sequence of blowups from $\mathbb{CP}^2$ and the other entry from $\mathbb{CP}^1 \times \mathbb{C}^1$. For example, $(T_{6(5-2)+1}[2], T_{6(5-2)+1}[3])$ is in Case A.

\item There are 19 pairs of $P$-resolutions of the same singularities (except Case A) with the same partial $(-1)$-data. One can check one by one that the singularities are of type $(3,1)$ and one of the corresponding BO $(-1)$-data in each pairs is of Case I and the other of Case II, where Case I and Case II are divided as in Remark~\ref{remark:CaseI-vs-CaseII}. For example, $(I_{30(4-2)+17}[2],I_{30(4-2)+17}[2])$ is in Case B.
\end{enumerate}

In the next two subsections, we identify $P$-resolutions (i.e., the MMP $(-1)$-data) in the above two cases with BO $(-1)$-data.

\subsection{Case A: Identical BO $(-1)$-data}

In the list of Bhupal-Ono~\cite[\S5]{Bhupal-Ono-2012}, there are 16 pairs of entries such that each pair consists of two entries corresponding to the same singularity with the same BO $(-1)$-data. But one of the entries in each pairs includes a BO $(-1)$-data of a minimal symplectic filling obtained from $\mathbb{CP}^2$ by a sequence of blow-ups and the other from $\mathbb{CP}^1 \times \mathbb{CP}^1$. Therefore we have 16 pairs of $P$-resolutions corresponding to the above 16 pairs such that two $P$-resolutions in each pairs have the same partial $(-1)$-data.

So we need to discriminate whether a given $P$-resolution in the above 16 pairs corresponds to a minimal symplectic filling from $\mathbb{CP}^2$ or from $\mathbb{CP}^1 \times \mathbb{CP}^1$.

\begin{proposition}
\label{proposition:CP2-vs-CP1*CP1}

In Table~\ref{table:CP2-vs-CP1*CP1}, we present all pairs of $P$-resolutions from the list of all $P$-resolutions in Section~\ref{section:list} that correspond to 16 pairs of the above identical BO $(-1)$-data. We denote also whether a given $P$-resolution corresponds to a minimal symplectic filling from $\mathbb{CP}^2$ or $\mathbb{CP}^1 \times \mathbb{CP}^1$ and the number of the corresponding minimal symplectic filling from the list of Bhupal-Ono~\cite[\S5]{Bhupal-Ono-2012}.
\end{proposition}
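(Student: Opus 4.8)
The plan is to prove this by an explicit, finite verification built on the minimal model program of Section~\ref{section:MMP}. For each of the 16 pairs of Bhupal--Ono entries that share identical BO $(-1)$-data I would first locate the pair of $P$-resolutions realizing that $(-1)$-data, and then, for each $P$-resolution in the pair, run the semi-stable MMP far enough to decide whether the associated compactified Milnor fiber is naturally built from $\mathbb{CP}^2$ or from $\mathbb{CP}^1 \times \mathbb{CP}^1$. The table is then the record of these 32-odd computations.

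First, for matching the pairs: for every singularity whose Bhupal--Ono list contains one of these 16 identical pairs, I would compute the partial $(-1)$-data of each of its $P$-resolutions exactly as in the worked examples of Section~\ref{section:MMP} --- run the MMP on the natural compactification of the corresponding $M$-resolution (using Proposition~\ref{proposition:smoothable-by-flips} and Proposition~\ref{proposition:degenerationCurves} to track how curves break up under flips), apply the reduction of Figures~\ref{figure:sequence-TOI-32} and \ref{figure:sequence-TOI-31}, and read off the intersections with the curves $C_i$. Comparing these partial $(-1)$-data with the entries of Bhupal--Ono~\cite[\S5]{Bhupal-Ono-2012}, and using the counts in Table~\ref{table:P-resolution} to rule out any other contributing $P$-resolution, this pins down, for each of the 16 Bhupal--Ono pairs, a unique pair of $P$-resolutions. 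By the very definition of Case~A the two $P$-resolutions in such a pair have the \emph{same} partial $(-1)$-data, so this step matches \emph{pairs} with \emph{pairs} but does not yet decide which $P$-resolution goes with which entry.

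The decisive step is to distinguish, inside each pair, the $\mathbb{CP}^2$-model from the $\mathbb{CP}^1\times\mathbb{CP}^1$-model. For each $P$-resolution involved I would continue the MMP of Proposition~\ref{proposition:smoothable-by-flips} until the central fiber $\widehat{W}_0$ is smooth, and then keep contracting $(-1)$-curves of $\widehat{W}_0 \cong \widehat{W}_t$ down to a minimal rational surface: if that surface is $\mathbb{CP}^2$ (equivalently, the last non-minimal model is $\mathbb{F}_1$) the $P$-resolution matches the Bhupal--Ono entry built from $\mathbb{CP}^2$, while if it is a Hirzebruch surface $\mathbb{F}_{2k}$ --- hence $\mathbb{CP}^1\times\mathbb{CP}^1$ after the standard deformation $\mathbb{F}_{2k}\rightsquigarrow\mathbb{F}_0$ --- it matches the entry built from $\mathbb{CP}^1\times\mathbb{CP}^1$. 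As a consistency check I would compare second Betti numbers: since the two entries of a Case~A pair share their BO $(-1)$-data, the configurations $\widetilde{E}_{\infty}\subset Z_2$ coincide, so the $Z_2$ obtained from $\mathbb{CP}^1\times\mathbb{CP}^1$ has $b_2$ exactly one more than the one obtained from $\mathbb{CP}^2$; consequently the compactified Milnor fibers --- and hence the Milnor numbers already recorded in Theorem~\ref{theorem:list} --- differ by one, and the $P$-resolution with the larger Milnor number is the $\mathbb{CP}^1\times\mathbb{CP}^1$ one. Assembling, for all 16 pairs, which $P$-resolution carries which ambient surface and which Bhupal--Ono number then yields Table~\ref{table:CP2-vs-CP1*CP1}.

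The main obstacle is organizational rather than conceptual: there are roughly 32 separate MMP runs and Bhupal--Ono reductions to perform, and one must keep the conventions of Section~\ref{section:MMP} and of Bhupal--Ono~\cite{Bhupal-Ono-2012} perfectly aligned so that the partial $(-1)$-data used in the matching step and the minimal surface produced in the distinguishing step refer to the same geometric configurations. The two genuinely delicate points are the identification of the small Hirzebruch surfaces (remembering that $\mathbb{F}_1$ is a blow-up of $\mathbb{CP}^2$ while $\mathbb{F}_0 = \mathbb{CP}^1\times\mathbb{CP}^1$) and the cusp configurations that do not appear in the compactifying divisor $\widehat{E}_{\infty}$; the Betti-number comparison above is what makes the individual matching within each Case~A pair unambiguous, and it is what I would use to double-check every row of the final table.
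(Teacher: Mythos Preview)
Your main distinguishing step---run the MMP to a smooth central fiber and then keep blowing down $(-1)$-curves until you reach a minimal rational surface---is exactly what the paper does: it blows down successively the $(-1)$-curves in the MMP $(-1)$-data until only non-negative curves remain, and reads off $\mathbb{CP}^2$ versus $\mathbb{CP}^1\times\mathbb{CP}^1$ from the resulting configuration (recorded pair by pair in Figures~\ref{figure:CP2-vs-CP1*CP1-first}--\ref{figure:CP2-vs-CP1*CP1-end}). So the core of your plan is correct and matches the paper.

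However, your proposed ``consistency check'' via Milnor numbers is wrong, and if you relied on it you would be misled. You argue that because the BO $(-1)$-data coincide, the surface $Z_2$ built from $\mathbb{CP}^1\times\mathbb{CP}^1$ has $b_2$ one larger than the one built from $\mathbb{CP}^2$, hence the Milnor numbers differ by one. But the list in Section~\ref{section:list} shows the opposite: in every Case~A pair the two $P$-resolutions have \emph{equal} Milnor numbers (for instance $T_{6(5-2)+1}[2]$ and $T_{6(5-2)+1}[3]$ both have $\mu=4$; $O_{12(5-2)+1}[2]$ and $O_{12(5-2)+1}[3]$ both have $\mu=5$; and so on throughout Table~\ref{table:CP2-vs-CP1*CP1}). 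The flaw in your reasoning is that the recorded BO $(-1)$-data does not pin down $b_2(Z_2)$ in the way you assume: the two $Z_2$'s in a Case~A pair in fact have the \emph{same} $b_2$, and what differs is the number of blow-downs needed to reach the minimal model (one more to reach $\mathbb{CP}^2$ than to reach $\mathbb{CP}^1\times\mathbb{CP}^1$). So the Milnor number cannot separate the two members of a Case~A pair, and the only way to make the identification is the direct blow-down computation you (and the paper) describe.
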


\begin{table}
\centering
\begin{tabular}{c c c c}
\toprule
$\mathbb{CP}^2$ & BO \# & $\mathbb{CP}^1 \times \mathbb{CP}^1$ & BO \# \\
\midrule
$T_{6(5-2)+1}[2]$ & 6 & $T_{6(5-2)+1}[3]$ & 7 \\
\midrule
$T_{6(4-2)+3}[3]$ & 18 & $T_{6(4-2)+3}[4]$ & 19 \\
\midrule
$T_{6(5-2)+3}[2]$ & 21 & $T_{6(5-2)+3}[4]$ & 22 \\
\midrule
$T_{6(5-2)+3}[3]$ & 24 & $T_{6(5-2)+3}[5]$ & 25 \\
\midrule
$O_{12(5-2)+1}[2]$ & 34 & $O_{12(5-2)+1}[3]$ & 35  \\
\midrule
$O_{12(3-2)+7}[3]$ & 46 & $O_{12(3-2)+7}[4]$ & 47 \\
\midrule
$O_{12(5-2)+7}[3]$ & 54 & $O_{12(5-2)+7}[4]$ & 55 \\
\midrule
$I_{30(5-2)+1}[3]$ & 68 & $I_{30(5-2)+1}[4]$ & 69 \\
\midrule
$I_{30(4-2)+7}[3]$ & 83 & $I_{30(4-2)+7}[4]$ & 84 \\
\midrule
$I_{30(5-2)+7}[5]$ & 88 & $I_{30(5-2)+7}[6]$ & 89 \\
\midrule
$I_{30(5-2)+7}[3]$ & 91 & $I_{30(5-2)+7}[4]$ & 92\\
\midrule
$I_{30(6-2)+7}[3]$ & 95 & $I_{30(6-2)+7}[4]$ & 96 \\
\midrule
$I_{30(4-2)+13}[4]$ & 102 & $I_{30(4-2)+13}[5]$ & 103 \\
\midrule
$I_{30(5-2)+13}[2]$ & 107 & $I_{30(5-2)+13}[4]$ & 108 \\
\midrule
$I_{30(5-2)+13}[3]$ & 110 & $I_{30(5-2)+13}[5]$ & 111 \\
\midrule
$I_{30(5-2)+19}[2]$ & 122 & $I_{30(5-2)+19}[3]$ & 123 \\
\bottomrule
\end{tabular}

\medskip

\caption{$\mathbb{CP}^2$ vs $\mathbb{CP}^1 \times \mathbb{CP}^1$}
\label{table:CP2-vs-CP1*CP1}
\end{table}

\begin{proof}
For each $P$-resolutions in the above table, we blow down successively $(-1)$-curves colored by green in the corresponding MMP $(-1)$-data so that we finally get a configuration of non-negative curves; Figures~\ref{figure:CP2-vs-CP1*CP1-first}--\ref{figure:CP2-vs-CP1*CP1-end}. We then are able to conclude whether the MMP $(-1)$-data come from $\mathbb{CP}^2$ or $\mathbb{CP}^1 \times \mathbb{CP}^1$.
\end{proof}

\begin{figure}[H]
  \centering
  \includegraphics{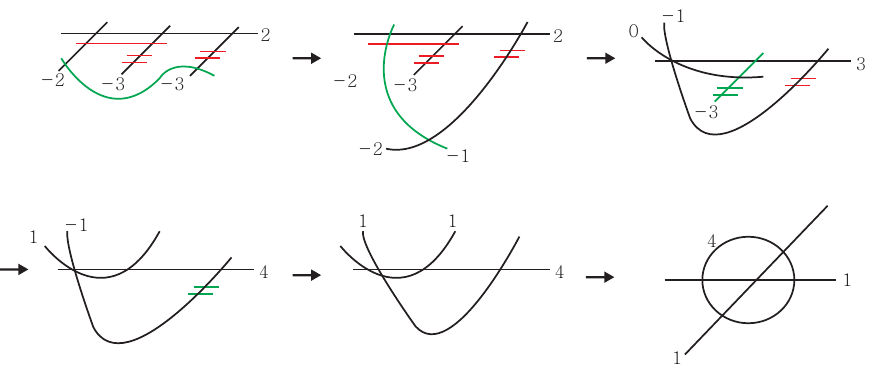}{}
  \caption{$T_{6(5-2)+1}[2]$ : $\mathbb{CP}^2$}
  \label{figure:CP2-vs-CP1*CP1-first}
\end{figure}

\begin{figure}[H]
  \centering
  \includegraphics{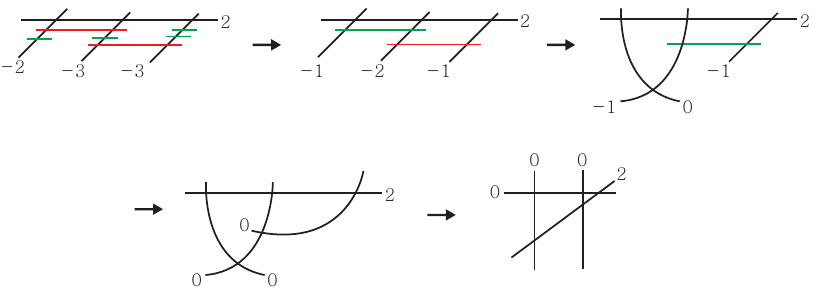}
  \caption{$T_{6(5-2)+1}[3]$ : $\mathbb{CP}^1 \times \mathbb{CP}^1$}
\end{figure}

\begin{figure}[H]
  \centering
  \includegraphics{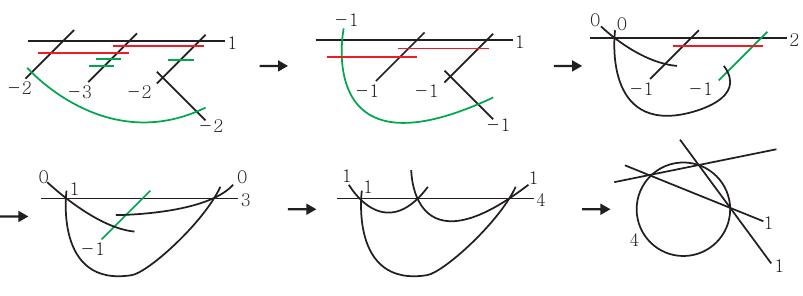}
  \caption{$T_{6(4-2)+3}[3]$ : $\mathbb{CP}^2$}
\end{figure}

\begin{figure}[H]
  \centering
  \includegraphics{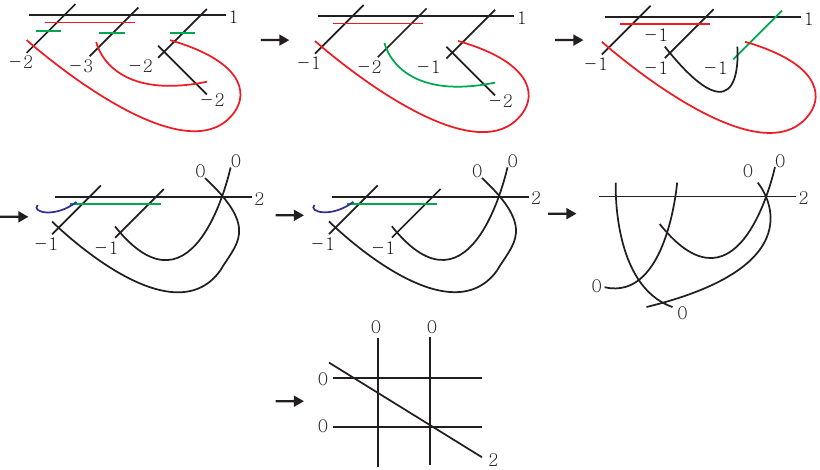}
  \caption{$T_{6(4-2)+3}[4]$ : $\mathbb{CP}^1 \times \mathbb{CP}^1$}
\end{figure}

\begin{figure}[H]
  \centering
  \includegraphics{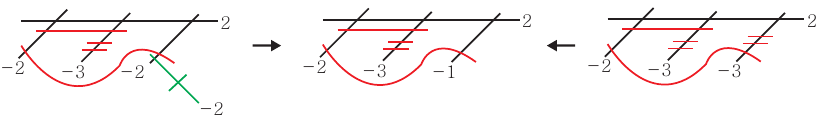}
  \caption{$T_{6(5-2)+3}[2]$ : $\mathbb{CP}^2$ ; same with $T_{6(5-2)+1}[2]$}
\end{figure}

\begin{figure}[H]
  \centering
  \includegraphics{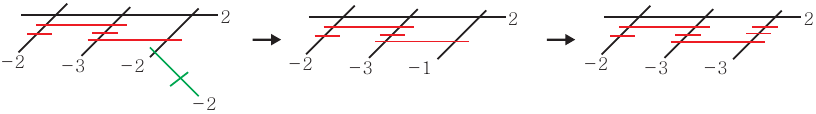}
  \caption{$T_{6(5-2)+3}[4]$ : $\mathbb{CP}^1 \times \mathbb{CP}^1$ ; same with $T_{6(5-2)+1}[3]$}
\end{figure}

\begin{figure}[H]
  \centering
  \includegraphics{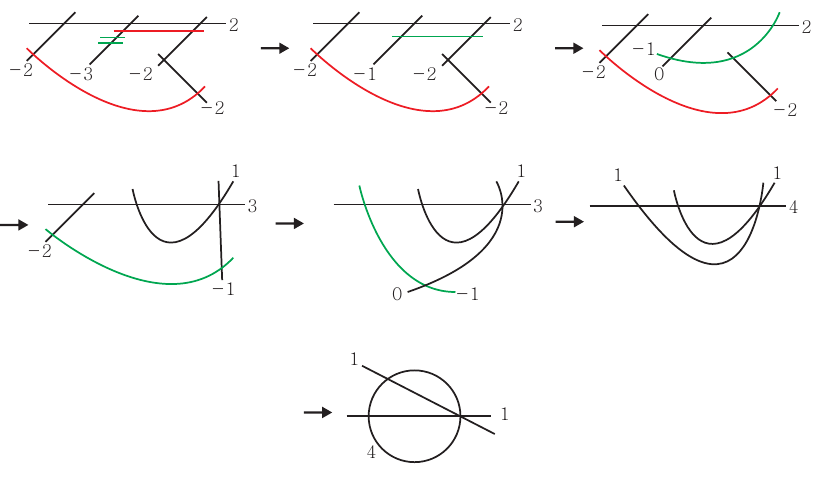}
  \caption{$T_{6(5-2)+3}[3] : \mathbb{CP}^2$}
\end{figure}

\begin{figure}[H]
  \centering
  \includegraphics{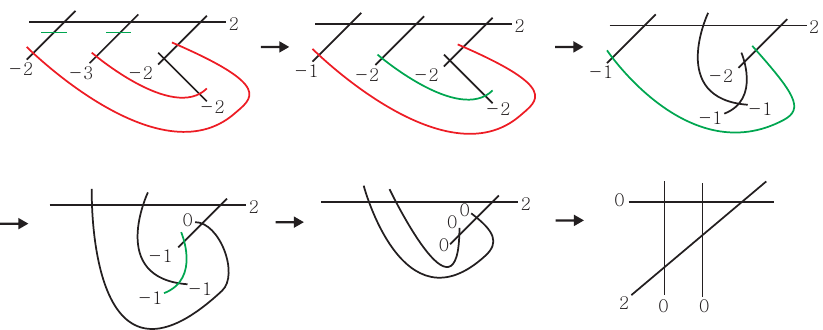}
  \caption{$T_{6(5-2)+3}[5]$ : $\mathbb{CP}^1 \times \mathbb{CP}^1$}
\end{figure}

\begin{figure}[H]
  \centering
  \includegraphics{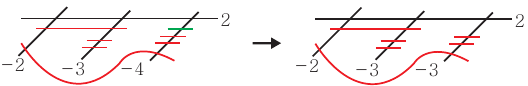}
  \caption{$O_{12(5-2)+1}[2]$ : $\mathbb{CP}^2$ ; same with $T_{6(5-2)+1}[2]$}
\end{figure}

\begin{figure}[H]
  \centering
  \includegraphics{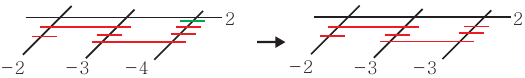}
  \caption{$O_{12(5-2)+1}[3]$ : $\mathbb{CP}^1 \times \mathbb{CP}^1$ ; same with $T_{6(5-2)+1}[3]$}
\end{figure}

\begin{figure}[H]
  \centering
  \includegraphics{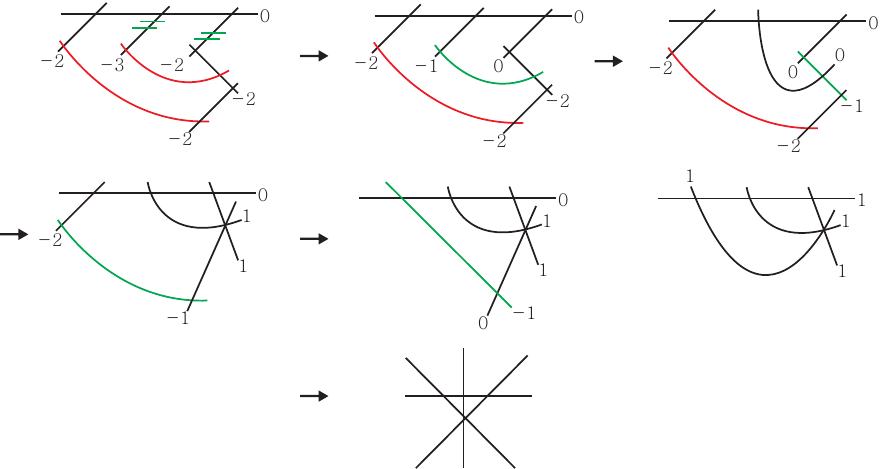}
  \caption{$O_{12(3-2)+7}[3]$ : $\mathbb{CP}^2$ ;same with $T_{6(4-2)+3}[3]$}
\end{figure}

\begin{figure}[H]
  \centering
  \includegraphics{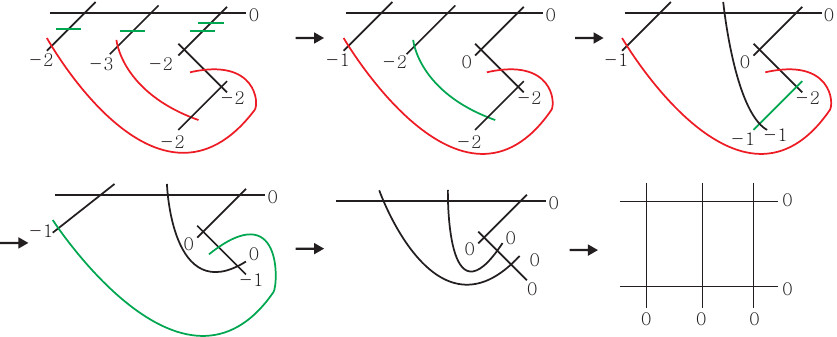}
  \caption{$O_{12(3-2)+7}[4]$ : $\mathbb{CP}^1 \times \mathbb{CP}^1$}
\end{figure}

\begin{figure}[H]
  \centering
  \includegraphics{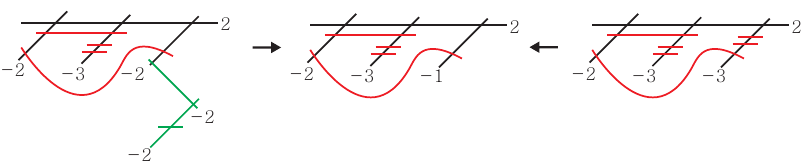}
  \caption{$O_{12(5-2)+7}[3]$ : $\mathbb{CP}^2$ ; same with $T_{6(5-2)+1}[2]$}
\end{figure}

\begin{figure}[H]
  \centering
  \includegraphics{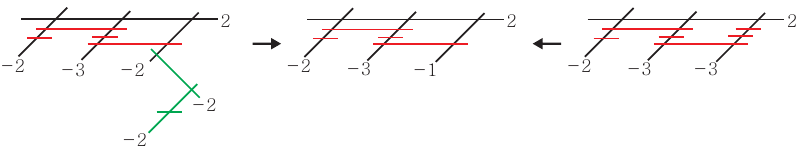}
  \caption{$O_{12(5-2)+7}[4]$ : $\mathbb{CP}^1 \times \mathbb{CP}^1$ ; same with $T_{6(5-2)+1}[3]$}
\end{figure}

\begin{figure}[H]
  \centering
  \includegraphics{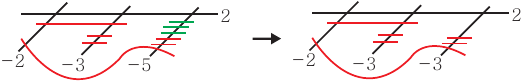}
  \caption{$I_{30(5-2)+1}[3]$ : $\mathbb{CP}^2$ ; same with $T_{6(5-2)+1}[2]$}
\end{figure}

\begin{figure}[H]
  \centering
  \includegraphics{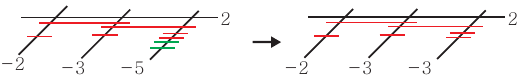}
  \caption{$I_{30(5-2)+1}[4]$ : $\mathbb{CP}^1 \times \mathbb{CP}^1$ ; same with $T_{6(5-2)+1}[3]$}
\end{figure}

\begin{figure}[H]
  \centering
  \includegraphics{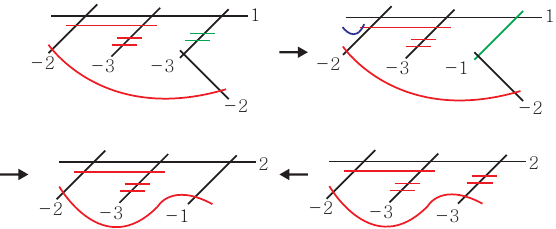}
  \caption{$I_{30(4-2)+7}[3]$ : $\mathbb{CP}^2$ ; same with $T_{6(5-2)+1}[2]$}
\end{figure}

\begin{figure}[H]
  \centering
  \includegraphics{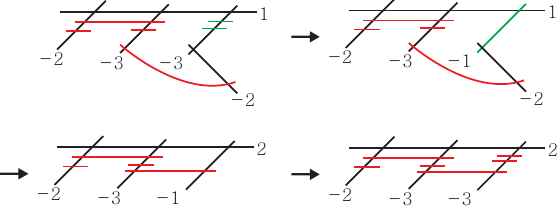}
  \caption{$I_{30(4-2)+7}[4]$ : $\mathbb{CP}^1 \times \mathbb{CP}^1$ ; same with $T_{6(5-2)+1}[3]$}
\end{figure}

\begin{figure}[H]
  \centering
  \includegraphics{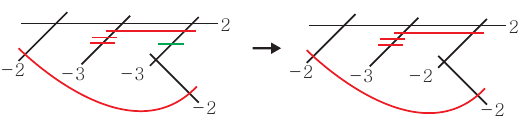}
  \caption{$I_{30(5-2)+7}[5]$ : $\mathbb{CP}^2$ ; same with $T_{6(5-2)+3}[3]$}
\end{figure}

\begin{figure}[H]
  \centering
  \includegraphics{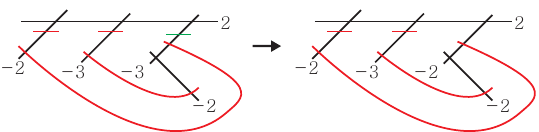}
  \caption{$I_{30(5-2)+7}[6]$ : $\mathbb{CP}^1 \times \mathbb{CP}^1$ ; same with $T_{6(5-2)+3}[5]$}
\end{figure}

\begin{figure}[H]
  \centering
  \includegraphics{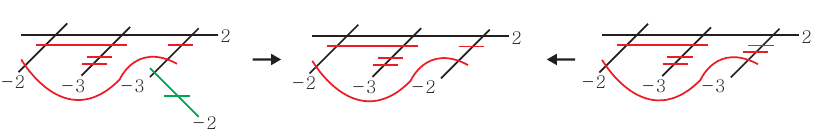}
  \caption{$I_{30(5-2)+7}[3]$ : $\mathbb{CP}^2$ ;same with $T_{6(5-2)+1}[2]$}
\end{figure}

\begin{figure}[H]
  \centering
  \includegraphics{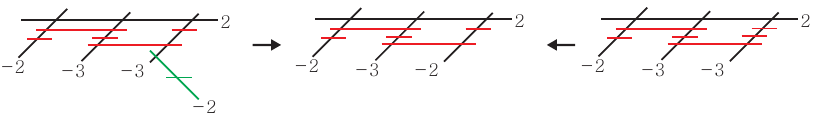}
  \caption{$I_{30(5-2)+7}[4]$ : $\mathbb{CP}^1 \times \mathbb{CP}^1$ ; same with $T_{6(5-2)+1}[3]$}
\end{figure}

\begin{figure}[H]
  \centering
  \includegraphics{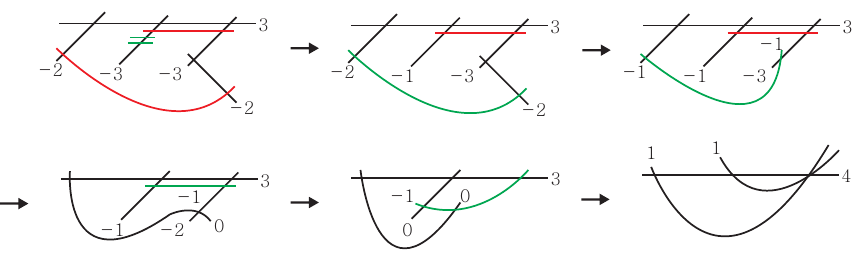}
  \caption{$I_{30(6-2)+7}[3]$ : $\mathbb{CP}^2$}
\end{figure}

\begin{figure}[H]
  \centering
  \includegraphics{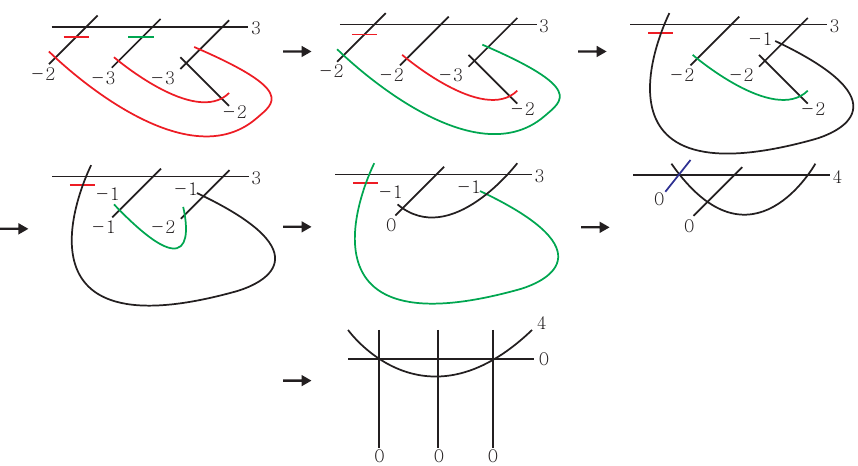}
  \caption{$I_{30(6-2)+7}[4]$ : $\mathbb{CP}^1 \times \mathbb{CP}^1$}
\end{figure}

\begin{figure}[H]
  \centering
  \includegraphics{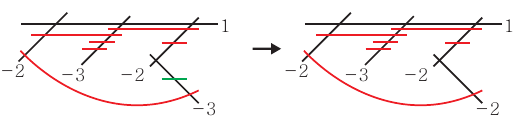}
  \caption{$I_{30(4-2)+13}[4]$ : $\mathbb{CP}^2$ ; same with $T_{6(4-2)+3}[3]$}
\end{figure}

\begin{figure}[H]
  \centering
  \includegraphics{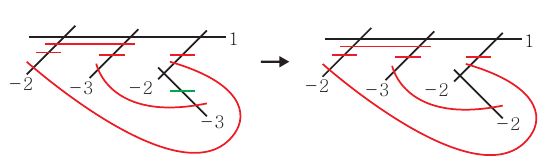}
  \caption{$I_{30(4-2)+13}[5]$ : $\mathbb{CP}^1 \times \mathbb{CP}^1$ ; same with $T_{6(4-2)+3}[4]$}
\end{figure}

\begin{figure}[H]
  \centering
  \includegraphics{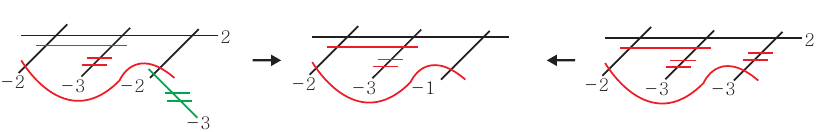}
  \caption{$I_{30(5-2)+13}[2]$ : $\mathbb{CP}^2$ ; same with $T_{6(5-2)+1}[2]$}
\end{figure}

\begin{figure}[H]
  \centering
  \includegraphics{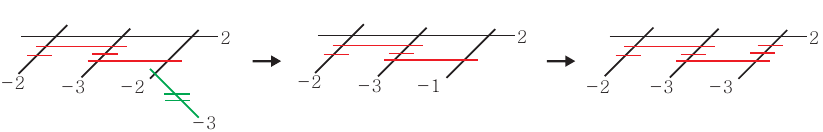}
  \caption{$I_{30(5-2)+13}[4]$ : $\mathbb{CP}^1 \times \mathbb{CP}^1$ ;s ame with $T_{6(5-2)+1}[3]$}
\end{figure}

\begin{figure}[H]
  \centering
  \includegraphics{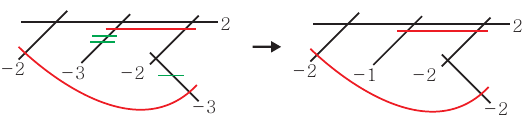}
  \caption{$I_{30(5-2)+13}[3]$ : $\mathbb{CP}^2$ ; same with $T_{6(5-2)+3}[3]$}
\end{figure}

\begin{figure}[H]
  \centering
  \includegraphics{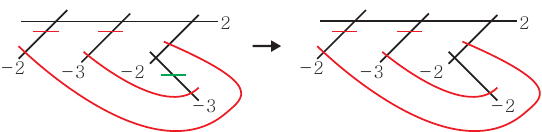}
  \caption{$I_{30(5-2)+13}[5]$ : $\mathbb{CP}^1 \times \mathbb{CP}^1$ ; same with $T_{6(5-2)+3}[5]$}
\end{figure}

\begin{figure}[H]
  \centering
  \includegraphics{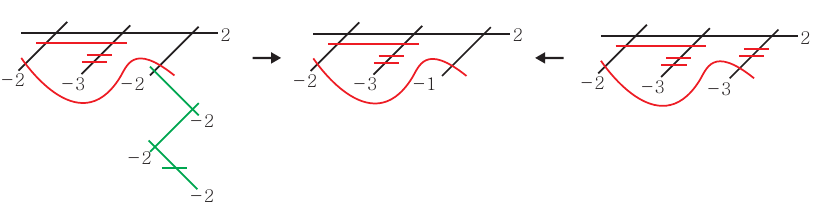}
  \caption{$I_{30(5-2)+19}[2]$ : $\mathbb{CP}^2$ ; same with $T_{6(5-2)+1}[2]$}
\end{figure}

\begin{figure}[H]
  \centering
  \includegraphics{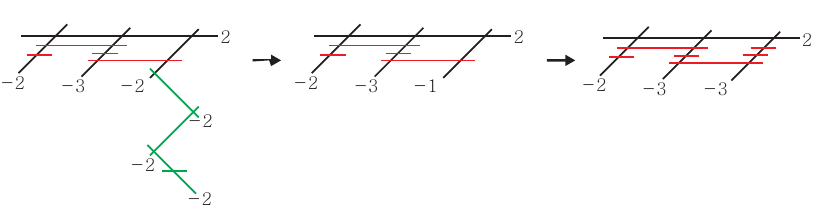}
  \caption{$I_{30(5-2)+19}[3]$ : $\mathbb{CP}^1 \times \mathbb{CP}^1$ ;same with $T_{6(5-2)+1}[3]$}
  \label{figure:CP2-vs-CP1*CP1-end}
\end{figure}

\subsection{Case B: Identical partial $(-1)$-data}

Except Case A, there are 19 pairs of $P$-resolutions that have the same partial $(-1)$-data. They are $P$-resolutions of singularities of type $(3,1)$. So we need to identify whether a give $P$-resolution in the pairs belongs to Case I or Case II.

\begin{proposition}
In Table~\ref{table:CaseI-vs-CaseII} we present 19 pairs of $P$-resolutions from the list of all $P$-resolutions in Section~\ref{section:list} with the same partial $(-1)$-data except pairs in Case A. We denote whether a given $P$-resolutions belongs to Case I or Case II and the number of the corresponding minimal symplectic fillings in the list of Bhupal-Ono~\cite[\S5]{Bhupal-Ono-2012}.
\end{proposition}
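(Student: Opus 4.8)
The plan is to argue exactly as in the proof of Proposition~\ref{proposition:CP2-vs-CP1*CP1}, but now using the distinguishing data of Remark~\ref{remark:CaseI-vs-CaseII} in place of the choice of $\mathbb{CP}^2$ versus $\mathbb{CP}^1\times\mathbb{CP}^1$. For each of the $19$ pairs listed in Table~\ref{table:CaseI-vs-CaseII} I would start from the MMP $(-1)$-data of the two $P$-resolutions (computed in Section~\ref{section:list}), which records the configuration of the compactifying divisor $\widehat{E}_\infty$ together with all $(-1)$-curves meeting it inside the rational surface $\widehat{Z}_t$. Since all of the singularities occurring here are of type $(3,1)$, I then apply the sequence of blow-ups and blow-downs of Figure~\ref{figure:sequence-TOI-31} to $\widehat{E}_\infty$, carrying along every $(-1)$-curve of the MMP $(-1)$-data; this produces the rational surface $Z_2$ with its cuspidal curve $C$, the two-sphere chain $C_1,\dots,C_k$, and the curves $A$, $B$ of Figure~\ref{figure:sequence-TOI-31}, and it tells us precisely how the surviving $(-1)$-curves sit with respect to all of these.

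Next, for each $P$-resolution I would locate the unique $(-1)$-curve $E\subset Z_2\setminus(C\cup A)$ with $E\cdot B=1$ guaranteed by Bhupal--Ono~\cite[Lemma~4.6]{Bhupal-Ono-2012}, and read off from the tracked data whether $E\cdot C_i=0$ for every $i$ (Case~I) or $E\cdot C_i=1$ for exactly one $i$ (Case~II); equivalently, by Bhupal--Ono~\cite[Lemma~4.9]{Bhupal-Ono-2012}, I would detect whether there is a $(-1)$-curve $F$ meeting both $C$ and some $C_j$, which is present precisely in Case~II. Having determined the case, and having already computed the partial $(-1)$-data, I would match the resulting tuple against the classification of Bhupal--Ono~\cite[\S5]{Bhupal-Ono-2012} to pin down the BO number entered in Table~\ref{table:CaseI-vs-CaseII}; in each pair the two $P$-resolutions fall into different cases, so the assignment is forced.

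The main obstacle is the bookkeeping in the blow-up/blow-down sequence: because the two $P$-resolutions in a pair have identical partial $(-1)$-data, the distinction between Case~I and Case~II is invisible at the level of the curves $C_i$ appearing in the compactifying divisor, and one must keep track of the auxiliary curves $A$ and $B$ and of the $(-1)$-curves that get contracted (those not meeting $C$) in order to recover $E$ and, in Case~II, the curve $F$. Concretely, this amounts to running the Figure~\ref{figure:sequence-TOI-31} moves separately for each of the $38$ $P$-resolutions while maintaining a complete incidence record; the rest is routine. I would present the outcome pair by pair in a sequence of figures, in the style of Figures~\ref{figure:CP2-vs-CP1*CP1-first}--\ref{figure:CP2-vs-CP1*CP1-end}, each displaying the configuration obtained after blowing down the indicated $(-1)$-curves, from which the Case~I/Case~II dichotomy and the corresponding BO number can be read off directly.
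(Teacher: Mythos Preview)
Your proposal is correct and rests on the same idea as the paper's proof: both invoke the Case~I/Case~II dichotomy of Remark~\ref{remark:CaseI-vs-CaseII} and decide the case by locating the $(-1)$-curve $E$ with $E\cdot B=1$ and checking whether it also meets some $C_i$. The paper, however, sidesteps precisely the bookkeeping you flag as the main obstacle. Rather than pushing each MMP $(-1)$-datum forward through the Figure~\ref{figure:sequence-TOI-31} sequence to $Z_2$ and tracking $A$, $B$, and all $(-1)$-curves there, the paper pulls $B$ back: it identifies the curve $\widetilde{B}\subset\widehat{E}_\infty$ that is transformed into $B$ under the blow-up/blow-down sequence, and then simply inspects the original MMP $(-1)$-data to see whether some $(-1)$-curve meets only $\widetilde{B}$ (Case~I) or not (Case~II). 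Since $\widetilde{B}$ depends only on the singularity (not on the $P$-resolution), one explicit example per singularity type suffices; the paper works out $T_{6(4-2)+5}[2]$ versus $T_{6(4-2)+5}[3]$ and declares the rest analogous. Your route would of course reach the same conclusions, but the paper's observation eliminates the need for the $38$ separate forward computations and the accompanying figures you propose.
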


\begin{table}
\centering

\begin{tabular}{c c c c} 
\toprule
Case I & BO \# & Case II & BO \# \\ 
\midrule
$T_{6(4-2)+5}[2]$ & 131 &   $T_{6(4-2)+5}[3]$ & 211 \\
\midrule
$T_{6(4-2)+5}[5]$ & 132 &   $T_{6(4-2)+5}[6]$ & 212 \\
\midrule
$T_{6(5-2)+5}[2]$ & 134 &   $T_{6(5-2)+5}[3]$ & 214 \\
\midrule
$O_{12(4-2)+5}[2]$ & 140 &   $O_{12(4-2)+5}[3]$ & 216 \\
\midrule
$O_{12(5-2)+5}[2]$ & 142 &   $O_{12(5-2)+5}[3]$ & 218\\
\midrule
$O_{12(5-2)+5}[4]$ & 143 &   $O_{12(5-2)+5}[5]$ & 219 \\
\midrule
$O_{12(5-2)+11}[3]$ & 158 &   $O_{12(5-2)+11}[4]$ & 226 \\
\midrule
$I_{30(4-2)+11}[2]$ & 165 &   $I_{30(4-2)+11}[3]$ & 228 \\
\midrule
$I_{30(5-2)+11}[2]$ & 167 &   $I_{30(5-2)+11}[3]$ & 230 \\
\midrule
$I_{30(5-2)+11}[4]$ & 168 &   $I_{30(5-2)+11}[5]$ & 231 \\
\midrule
$I_{30(4-2)+17}[2]$ & 181 &   $I_{30(4-2)+17}[3]$ & 238 \\
\midrule
$I_{30(4-2)+17}[5]$ & 180 &   $I_{30(4-2)+17}[6]$ & 239 \\
\midrule
$I_{30(5-2)+17}[2]$ & 185 &   $I_{30(5-2)+17}[3]$ & 242 \\
\midrule
$I_{30(5-2)+17}[4]$ & 184 &   $I_{30(5-2)+17}[5]$ & 243 \\
\midrule
$I_{30(4-2)+23}[2]$ & 194 &   $I_{30(4-2)+23}[4]$ & 249 \\
\midrule
$I_{30(4-2)+23}[6]$ & 193 &   $I_{30(4-2)+23}[7]$ & 250 \\
\midrule
$I_{30(5-2)+23}[2]$ & 197  &   $I_{30(5-2)+23}[3]$ & 253 \\
\midrule
$I_{30(4-2)+29}[2]$ & 204 &   $I_{30(4-2)+29}[3]$ & 256 \\
\midrule
$I_{30(5-2)+29}[2]$ & 206  &   $I_{30(5-2)+29}[3]$ & 257 \\
\bottomrule
\end{tabular}

\medskip

\caption{Case I vs Case II}
\label{table:CaseI-vs-CaseII}
\end{table}

\begin{proof}
According to Remark~\ref{remark:CaseI-vs-CaseII}, the set of minimal symplectic fillings are divided into two classes, Case I and Case II, by the existence of the $(-1)$-curve $E$ in $Z_2$ such that $B \cdot E=1$, and $C_i \cdot E = 0$ for all $i$ or $C_i \cdot E=1$ for some $i$.

Let $\widetilde{B}$ is the curve in the compactifying divisor that is transformed into the curve $B$ via the sequence of blow-ups and blow-downs described in Figures~\ref{figure:sequence-TOI-32}, \ref{figure:sequence-TOI-31}. For example, $\widetilde{B}$ for $T_{6(4-2)+5}$ is the second curve in the middle arm; see the below picture. So if there exists a $(-1)$-curve $E$ intersecting only $\widetilde{B}$, then one can conclude that the MMP $(-1)$-data is in Case I. For example, one can easily check that $T_{6(4-2)+5}[2]$ is of Case I but $T_{6(4-2)+5}[3]$ is of Case II.

\begin{minipage}{\textwidth}
\centering
\includegraphics[width=0.4\textwidth]{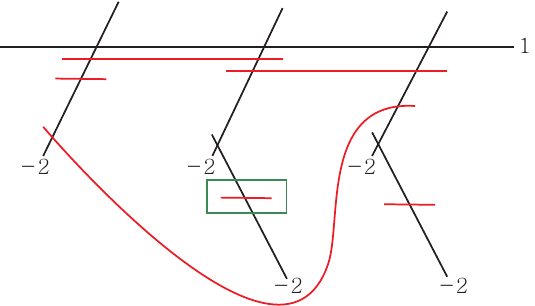} \quad \includegraphics[width=0.4\textwidth]{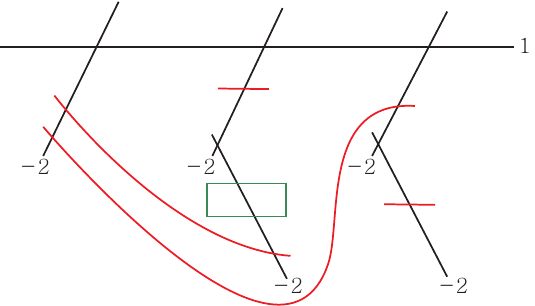}

$T_{6(4-2)+5}[2]$ \hspace{3.5cm} $T_{6(4-2)+5}[3]$
\end{minipage}

By a similar method, the assertion follows.
\end{proof}

\section{$P$-resolutions and their invariants}
\label{section:list}

We now present all $P$-resolutions with their MMP $(-1)$-data, invariants (the dimension, $\dim$, of the corresponding irreducible component of $\Def(X_0)$ and Milnor number $\mu$) and the corresponding BO entries.

\subsection*{Tetrahedral $T_{6(n-2)+1}$}\hfill

\begin{minipage}{\textwidth}
\plist{E_6}

\quad
\includegraphics[width=0.3\textwidth]{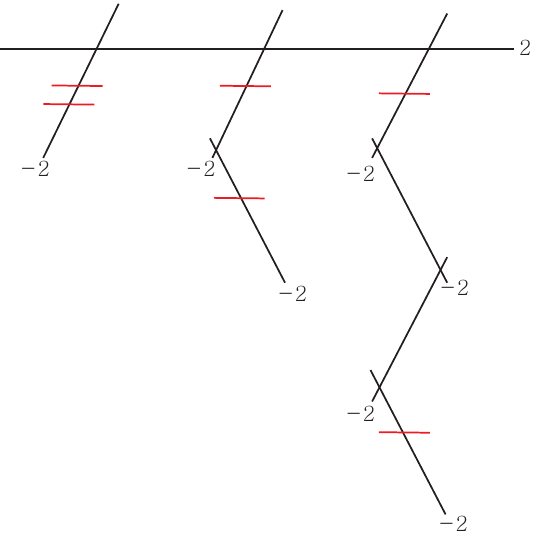}\quad
$\dim= n+6$,$\mu= 4$, BO \#207
\end{minipage}

\bibliographystyle{amsplain}

\end{document}